\numberwithin{figure}{section}
\newtheorem{theorem}{Theorem}[section]
\newtheorem{lemma}[theorem]{Lemma}
\newtheorem{conjecture}[theorem]{Conjecture}
\theoremstyle{definition}
\newtheorem{definition}[theorem]{Definition}
\theoremstyle{remark}
\newcommand{\ip}[2]{\left\langle#1,#2\right\rangle}
\newcommand{\degrees}{\degree}
\newenvironment{itemize*}
  {\begin{itemize}[topsep=-\parskip+\jot]}
  {\end{itemize}}
\newenvironment{enumerate*}
  {\begin{enumerate}[label=(\alph*),topsep=-\parskip+\jot]}
  {\end{enumerate}}
\newenvironment{enumerate**}
  {\begin{enumerate}[label=(\roman*),topsep=-\parskip+\jot]}
  {\end{enumerate}}
\newenvironment{enumerate***}
  {\begin{enumerate}[label=(\alph*'),topsep=-\parskip+\jot]}
  {\end{enumerate}}
\begin{document}

\title{Geodesic nets with three boundary vertices}

\author{Fabian Parsch}
\address{Fabian Parsch, Department of Mathematics, University of Toronto, 40 St. George Street, Toronto, ON M5S 2E4, Canada}
\email{fparsch@math.toronto.edu}

\begin{abstract}
	We prove that a geodesic net with three boundary (= unbalanced) vertices on a non-positively curved plane has at most one balanced vertex. We do not assume any a priori bound for the degrees of unbalanced vertices.

	The result seems to be new even in the Euclidean case.

	We demonstrate by examples that the result is not true for metrics of positive curvature on the plane, and that there are no immediate generalizations of this result for geodesic nets with four unbalanced vertices.
\end{abstract}

\maketitle

%%%%%%%%%%%%%%%%%%%%%%%%%%%%%%%%%%%%%
\section{Introduction}
%%%%%%%%%%%%%%%%%%%%%%%%%%%%%%%%%%%%%
\label{sect:intro}

Geodesic nets are graphs embedded in a manifold such that each edge is a geodesic segment. Furthermore, we require that at each \emph{balanced} vertex, the unit tangent vectors of the edges cancel, i.e. their sum is zero. All other vertices are called \emph{unbalanced}. We consider such nets where each edge has weight one.

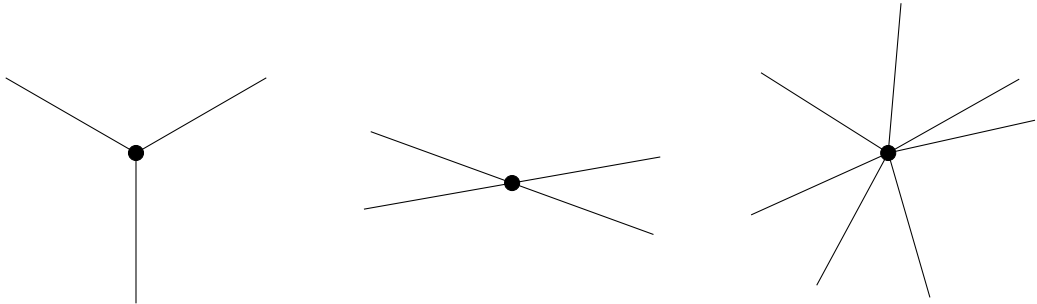
\begin{figure}
	\centering
	\begin{tikzpicture}[scale=2]
		\draw (-1,0) -- +(30:1);
		\draw (-1,0) -- +(150:1);
		\draw (-1,0) -- +(270:1);
		\draw [fill=black] (-1,0) circle [radius=0.05];
		
		\draw (1.5,-0.2) -- +(10:1);
		\draw (1.5,-0.2) -- +(-20:1);
		\draw (1.5,-0.2) -- +(160:1);
		\draw (1.5,-0.2) -- +(190:1);
		\draw [fill=black] (1.5,-0.2) circle [radius=0.05];
		
		\draw (4,0) -- (4.085159600262456,0.9963673230707331);
		\draw (4,0) -- (3.154511096969029,0.5339929913879817);
		\draw (4,0) -- (3.088735454976287,-0.4118214770780241);
		\draw (4,0) -- (4.975942150302623,0.2180296293229263);
		\draw (4,0) -- (3.5248510852651163,-0.8799053976571926);
		\draw (4,0) -- (4.2773364928491535,-0.9607728502274256);
		\draw (4,0) -- (4.870934923071531,0.49139837176611095);
		\draw [fill=black] (4,0) circle [radius=0.05];
	\end{tikzpicture}
	\caption{Examples for balanced vertices of degree $3$, $4$ and $7$. }
	\label{fig:balancedvertices}
\end{figure}

In \cite{iejg347253}, Memarian studies a question about such planar geodesic nets, i.e. geodesic nets in flat $\mathbb{R}^2$, (which he calls \emph{critical graphs}) by Gromov related to his work in \cite{MR2563769}: Given a number of unbalanced vertices, each of degree one, what is the maximal number of balanced vertices in a geodesic net \enquote{spanned} by the unbalanced vertices. In the following we will turn towards a more general question than the one studied by Memarian (for a comparison, see below).

The study of geodesic nets bears some relation to minimal networks and the Steiner Problem as studied by Ivanov and Tuzhilin (for a survey, see \cite{MR3616371}). In fact, Corollary 1.1 in Chapter 3 of \cite{MR1271779} states that geodesic nets are local minima (with respect to the length of all edges) of immersed parametric networks. The emphasis has to be put on \emph{immersed}, though: In fact, in their variational study of the Steiner Problem, Ivanov and Tuzhilin, later allow for degeneration. That means that when embedding the graph, several vertices can be mapped to the same image, abandoning the immersive property. They show that in most situations, that variational problem arrives at graphs with vertices of degree $3$ that then minimize the total length of all edges given fixed boundary vertices. Examples like the one provided in Figure \ref{fig:fournet}, however, show that if we want to maximize the number of vertices in a geodesic net instead of minimizing the length of the edges, allowing vertices of degree higher than $3$ allows for many more balanced vertices.

%%%%%%%%%%%%%%%%%%%%%%%%%%%%%%%%%%%%%
\subsection{Geodesic nets}
%%%%%%%%%%%%%%%%%%%%%%%%%%%%%%%%%%%%%

We are studying geodesic nets which are defined as follows.

\begin{definition}\label{def:geodesicnet}
	Let $G=(V,E)$ be a connected, finite graph embedded in a surface $\Sigma$ such that each edge is a geodesic.
	
	We call $G$ a \emph{geodesic net} by denoting as $B\subset V$ the set of \emph{balanced vertices} consisting of all vertices $v$ for which the following is true: The vertex has at least degree $3$ and the sum of all unit vectors in the tangent space $T_v\Sigma$ tangent to the edges incident to $v$ and directed from $v$ is zero.

	Accordingly, we call $U=V\setminus B$ the set of \emph{unbalanced vertices} and say that $G=(B,U,E)$ is a geodesic net.
\end{definition}
	
\begin{figure}[b]
	\centering
	\begin{tikzpicture}[scale=0.8]
		\draw [line width=0.5pt] (0,0)-- (20:2.7);
		\draw [line width=0.5pt] (0,0)-- (140:1);
		\draw [line width=0.5pt] (0,0)-- (260:1.8);
	\end{tikzpicture}
	\caption{A geodesic net with $3$ unbalanced vertices and $1$ balanced vertex (the \textit{Fermat Point}). We will show that this is in fact the maximal number of balanced vertices when only given $3$ unbalanced vertices on the plane with a metric of nonpositive curvature.}
	\label{fig:threenet}
\end{figure}
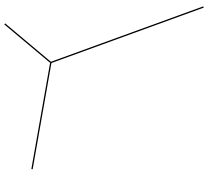
	
\begin{figure}[b]
	\centering
	\begin{tikzpicture}[scale=0.7]
	\clip (-5.5,1.5) rectangle (5.5,12);
	\node at (0,0) (Q) {};
	\node at (150:3) (X) {};
	\node at (125:3) (Y1) {};
	\node at (90:3) (Y2) {};
	\node at (55:3) (Y3) {};
	\node at (30:3) (Z) {};
	
	\node at (0,15) (P) {};
	\path (P) -- ++(210:6) node (A) {};
	\path (P) -- ++(255:6) node (B1) {};
	\path (P) -- ++(270:6) node (B2) {};
	\path (P) -- ++(285:6) node (B3) {};
	\path (P) -- ++(330:6) node (C) {};
	
	\node at (-.84,4.48) (L) {};
	\node at (.84,4.48) (N) {};

	\draw (A.center) -- (B1.center) -- (C.center);
	\draw (A.center) -- (B2.center) -- (C.center);
	\draw (A.center) -- (B3.center) -- (C.center);
	
	\draw (X.center) -- (Y1.center) -- (Z.center);
	\draw (X.center) -- (Y2.center) -- (Z.center);
	\draw (X.center) -- (Y3.center) -- (Z.center);
	
	\draw (B1.center) -- (X.center);
	\draw (B3.center) -- (Z.center);
	
	\draw (Y1.center) -- (A.center);
	\draw (Y3.center) -- (C.center);
	
	\draw (B2.center) -- (Y2.center);
	
	\draw (A.center) -- (L.center);
	\draw (X.center) -- (L.center);
	\draw (Z.center) -- (N.center);
	\draw (C.center) -- (N.center);
	\draw (L.center) -- (N.center);
	
	\draw (A.center) -- (Z.center);
	\draw (C.center) -- (X.center);
\end{tikzpicture}
	\caption{An example of a geodesic net in the plane with four unbalanced vertices and $27$ balanced vertices of which eight have degree $3$, eighteen have degree $4$ and one has degree $6$. For a detailed construction, see Figure \ref{fig:construct4}.}
	\label{fig:fournet}
\end{figure}
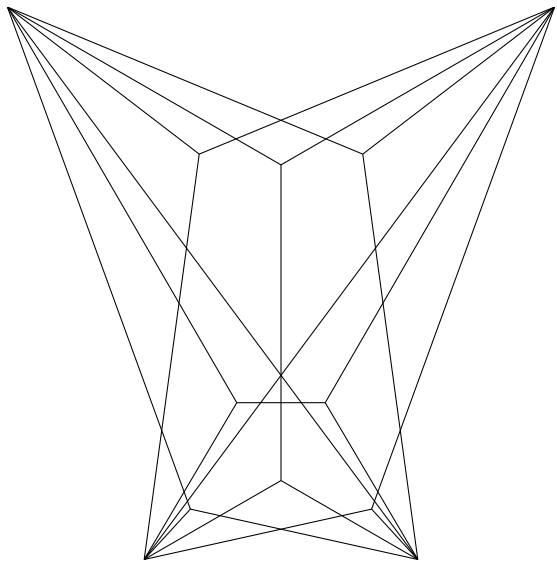

One might be inclined to use \emph{interior} and \emph{boundary} vertex for balanced and unbalanced vertices respectively. Note, however, that the above definition does not require that all unbalanced vertices lie on the boundary of the geodesic net (i.e. on the boundary of the union of its vertices, edges and faces). An unbalanced vertex can lie inside the convex hull of the net. In light of lemma \ref{thm:convexhull}, for the case of three unbalanced vertices on the plane, the expressions \emph{boundary vertex} and \emph{unbalanced vertex} are interchangeable, though.

Two of the most intuitive examples of geodesic nets on the flat plane with \textit{three} unbalanced vertices are described as follows:
\begin{itemize*}
	\item A triangle with its three vertices and three edges is a geodesic net with three unbalanced and no balanced vertices.
	\item On the other hand, we can consider three points arranged in a triangle with all interior angles less than $120\degrees$. We can position a point inside the triangle so that, if connected to the three corners, this point is balanced (see figure \ref{fig:threenet}). That such a \textit{Fermat point} exists is a result of classic Euclidean geometry.
\end{itemize*}

%%%%%%%%%%%%%%%%%%%%%%%%%%%%%%%%%%%%%
\subsection{Main question}
%%%%%%%%%%%%%%%%%%%%%%%%%%%%%%%%%%%%%

The question which is considered in the Main Theorem \ref{thm:mainshort} is described by:

\textit{Given the number of unbalanced vertices of a geodesic net, what is the maximal number of balanced vertices that the net can have?}

This question is quite general in the following sense: We are assuming no a priori information on the distance/relative position of the unbalanced vertices. That is the bound should only depend on the number of unbalanced vertices. We also allow the unbalanced vertices to have arbitrary degree.

We will show that, if the surface is $\mathbb{R}^2$ with a metric of nonpositive curvature (including flat and hyperbolic space as two special cases), then the configuration with the Fermat Point in Figure \ref{fig:threenet} is in fact maximal. Here is our main theorem:

\begin{theorem}[Main Theorem]\label{thm:mainshort}
	Each geodesic net with 3 unbalanced vertices (of arbitrary degree) on the plane endowed with a Riemannian metric of non-positive curvature has at most one balanced vertex.
\end{theorem}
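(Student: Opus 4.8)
The plan is to argue by contradiction: among geodesic nets on non-positively curved planes with three unbalanced vertices $A,B,C$ and at least two balanced vertices, pick one minimizing first the number of balanced vertices and then the number of edges. By Lemma~\ref{thm:convexhull} the balanced vertices are precisely the interior vertices and the topological boundary of the net carries only $A,B,C$. The first task is a chain of reductions. I would show a minimal counterexample is $2$-connected: a bridge, or a cut vertex $v$, splits the net into sub-nets in each of which the separating vertex is no longer balanced and so must be counted among the unbalanced vertices; tracking how $A,B,C$ are distributed among the pieces, together with the base fact that a geodesic net with at most two unbalanced vertices is a single geodesic segment (which itself follows by looking at maxima of convex functions restricted to the net), forces all of the balanced vertices into one piece which is again a geodesic net with exactly three unbalanced vertices but strictly fewer edges, contradicting minimality. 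In a $2$-connected plane graph every face is bounded by a simple cycle; there are no bigon faces because Gauss--Bonnet on a geodesic bigon in non-positive curvature would make the sum of its two angles at most $0$; hence the outer cycle passes only through $A,B,C$, so it is a genuine geodesic triangle, every balanced vertex lies strictly inside it, $\angle A+\angle B+\angle C\le\pi$, and each bounded $k$-gon face has interior angle sum at most $(k-2)\pi$.

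Next I would combine a local and a global constraint. Locally, at a balanced vertex of degree $d\ge 3$ the $d$ outgoing unit vectors sum to zero, so the $d$ consecutive angular gaps are each $<\pi$ and total $2\pi$; when $d=3$ all three equal exactly $2\pi/3$, which already forbids a triangular face with two cubic balanced corners, and more generally constrains how much a sub-collection of the outgoing directions can cluster. Globally one sums the Gauss--Bonnet defects of all bounded faces and of the triangle. Here is the crux, and the step I expect to be hardest: this global sum merely reproduces Euler's formula and gives no contradiction on its own, while a single unbalanced vertex of large degree can absorb an arbitrary amount of combinatorial complexity, so plain counting cannot suffice. One must genuinely use that the outgoing directions at a balanced vertex \emph{sum to zero} (not merely positively span the tangent plane), together with the thinness of non-positively curved triangles. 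The route I would pursue is a sweeping/monotonicity argument: restrict a convex function $f$ of the plane (a Busemann function, or locally an affine-like function) to the net, use that $f$ is convex along each geodesic edge and that its outgoing slopes at a balanced vertex sum to zero to conclude that $f$ attains its maximum over the net only at $A$, $B$ or $C$, and then play the resulting confinement of the balanced vertices off against the angle-sum bounds on the sub-polygons they span, choosing $f$ so as to trap two balanced vertices in a region too narrow to accommodate the angular spread their balancing forces.

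I expect the genuine difficulty to lie in two places: controlling edges that run directly between balanced vertices, and handling balanced vertices of large degree, where the rigid $2\pi/3$ gaps of the cubic case are unavailable and one must extract a weaker no-clustering estimate from the vanishing of the direction sum via CAT$(0)$ comparison. This is also where the two hypotheses are used essentially. Non-positivity of the curvature enters through the angle-sum inequalities for triangles and faces --- these reverse in positive curvature, where the statement genuinely fails --- and the number three enters through the fact that the convex hull of the unbalanced vertices is a single triangle; with four unbalanced vertices the hull is a quadrilateral and, as the example with $27$ balanced vertices shows, no analogue of the bound can survive.
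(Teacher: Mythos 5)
There is a genuine gap, and it sits exactly where you flag ``the crux.'' Your reductions (minimal counterexample, cut-vertex splitting, no bigons by Cartan--Hadamard) are plausible, but two things go wrong afterwards. First, the claim that in a $2$-connected net the outer cycle ``passes only through $A,B,C$'' is false: the boundary walk of the outer face is a cycle in the \emph{graph}, not the boundary of the convex hull, and it will in general pass through balanced vertices (in the paper it is exactly the walk $x,u_1,\dots,u_r,y,v_1,\dots,v_s,z,w_1,\dots,w_t$, and controlling the turn angles at the balanced vertices $u_i,v_j,w_k$ on it is the whole difficulty). Second, and more importantly, the step that is supposed to produce the contradiction is never supplied. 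The convex-function/Busemann maximum principle you describe only re-proves the confinement of balanced vertices to the interior of the convex hull of $A,B,C$ --- this is Lemma \ref{thm:convexhull}, the easy part --- and the hope of ``trapping two balanced vertices in a region too narrow to accommodate their angular spread'' cannot work as stated: the example of Figure \ref{fig:fournet} shows that confinement to a compact convex region is compatible with $27$ balanced vertices, so narrowness alone is not the obstruction; what matters is that there are exactly \emph{three} boundary vertices, and this must enter through a global turning-angle count, not through a containment statement.

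Concretely, what is missing is a quantitative local-to-global mechanism of the kind the paper builds. Your local input (gaps $<180\degrees$ summing to $360\degrees$, rigidity at degree $3$) is strictly weaker than what is needed: the proof requires that the \emph{combined} angle spanned by two consecutive gaps at a balanced vertex of arbitrary degree is at most $240\degrees$ (Lemmas \ref{thm:combinedangle} and \ref{thm:specialcombinedangle}, the latter being a genuinely delicate ``staircase'' argument for degree $\geq 5$ --- precisely the high-degree no-clustering estimate you defer). This feeds into the $60\degrees$ Lemma (\ref{thm:60degrees}, resp.\ \ref{thm:60degreesnegative} for nonpositive curvature), which bounds by $60\degrees$ the total turn of the circumference as it passes around each unbalanced vertex, and the conclusion then comes from Gau\ss--Bonnet for the (essentially simple, not simple) circumference, Lemma \ref{thm:gaussbonnetessential}: three contributions of at most $60\degrees$, at most $180\degrees$ at each unbalanced vertex, and negative turns elsewhere cannot reach the required $360\degrees$ unless the net degenerates to the Fermat tree. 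In the nonpositively curved case one must additionally replace path-independence of the turn angle by constructing auxiliary comparison paths inside the relevant geodesic triangles (Lemma \ref{thm:threeverticesnonpos}). None of this, nor a substitute for it, is present in your outline, so as it stands the proposal identifies the right difficulties but does not prove the theorem.
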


In fact, the theorem is new even in the case when dealing with geodesic nets in the Euclidean plane (and the proof is almost as difficult as in the general case). On the other hand the result is false for metric of positive curvature on the plane. In section \ref{sect:positive}, we exhibit and example of a geodesic net with 3 unbalanced and 3 balanced vertices on the round hemisphere (of course this Riemannian metric could be extended to a positively curved metric on the whole plane).

The result is somewhat surprising in the context of Figure \ref{fig:fournet} which is showing that with four unbalanced vertices in the plane, it is possible to have at least $27$ balanced vertices. Note that we do not claim that the example in this figure is maximal. We will give a detailed description of this example in section \ref{sect:construct4}.

This example shows that for larger numbers of unbalanced vertices, the number of balanced vertices increases significantly, adding credibility to the following conjecture that we learned from Alexander Nabutovsky:

\begin{conjecture}\label{conj:limitless}
	There exists $N_0$ and a sequence of geodesic nets in the plane with $N_0$ unbalanced vertices and an arbitrarily large number of balanced vertices.
\end{conjecture}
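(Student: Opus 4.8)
The plan is to prove the conjecture constructively by exhibiting an explicit infinite family, generalizing the $27$-vertex example of Figure \ref{fig:fournet}. The starting observation is that in the flat plane balancedness is a purely \emph{angular} condition: a vertex of degree $d$ is balanced if and only if the $d$ unit tangent directions of its incident edges sum to zero, independently of the edge lengths. In particular a degree-$4$ vertex is balanced exactly when its edges form two straight lines crossing there, and a degree-$3$ vertex is balanced exactly when its three edges meet pairwise at $120\degrees$. This decouples the problem into a combinatorial/angular layer (choose a graph and prescribe edge directions so that every interior vertex is balanced) and a geometric layer (realize the prescribed directions by an honest embedded straight-line graph with positive edge lengths and no spurious crossings).

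First I would isolate a \emph{fundamental block}: a finite geodesic net with a small number of balanced interior vertices together with several ``ports,'' i.e. half-edges leaving the block at prescribed directions. The block is designed so that when two copies are glued along matching ports, the shared endpoint acquires enough incident edges, in directions summing to zero, to become balanced. Gluing $n$ copies in a chain (or in a more intricate symmetric array, as the top cluster near $P$ and the bottom cluster near $Q$ in Figure \ref{fig:fournet} suggest) then produces a net whose number of balanced vertices grows linearly in $n$. The baseline intuition for why this should be possible at all is that a finite piece of the square grid, or of the hexagonal honeycomb, already has \emph{all} of its interior vertices balanced (at $90\degrees$ and $120\degrees$ respectively); the only defect is along the boundary.

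The main obstacle --- and the reason the conjecture is subtle rather than routine --- is controlling the number of \emph{unbalanced} vertices as $n\to\infty$. In the grid or honeycomb the count of boundary (hence unbalanced) vertices grows with the size of the patch, so those patterns do not by themselves keep $N_0$ fixed. The crux of any successful construction is therefore a boundary of bounded combinatorial complexity: the unmatched ports of the assembled blocks must all funnel into a fixed finite set of genuine corner vertices. I expect the most promising route is a self-similar scheme in which successive blocks are scaled down by a fixed ratio and rotated, so that the boundary features accumulate toward finitely many limit directions and can be capped off by boundedly many unbalanced vertices, while the interior continues to carry arbitrarily many balanced ones. Throughout, the hard part is the geometric realizability: once the directions are fixed, the straight-edge condition rigidly propagates vertex positions, and one must verify that the global figure closes up as an embedded net with positive lengths and no extra intersections. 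This rigidity of straight-line graphs in the flat plane is exactly the phenomenon that forces the bound in the Main Theorem \ref{thm:mainshort}, and overcoming it for $N_0\geq 4$ --- rather than merely for the single example of Figure \ref{fig:fournet} --- is where the real work lies.
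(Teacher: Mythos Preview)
The statement you are trying to prove is labeled a \emph{conjecture} in the paper, and the paper does not prove it. It is presented as an open problem (attributed to Nabutovsky), supported only by the heuristic that the jump from $f(3)=1$ to $f(4)\geq 27$ is dramatic. There is therefore no ``paper's own proof'' to compare against.

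Your proposal is not a proof either, and you acknowledge this yourself: the final paragraph explicitly says that overcoming the boundary-rigidity obstruction ``is where the real work lies.'' What you have written is a reasonable strategy outline --- modular blocks, gluing along ports, the observation that grid and honeycomb patches have balanced interiors but unbounded boundaries, and the idea of a self-similar cap-off scheme --- but none of the essential steps is carried out. In particular, you do not exhibit a concrete block with the port structure you describe, you do not specify the gluing so that port endpoints actually become balanced, and the self-similar boundary-capping idea is left entirely speculative. The latter is the genuine obstacle: scaling and rotating blocks changes the absolute directions of their ports, so matching them to a \emph{fixed} finite set of unbalanced vertices is not automatic, and there is no argument given that the accumulated figure embeds without unwanted intersections. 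As it stands the proposal identifies the right difficulty but does not resolve it, so the conjecture remains open after your attempt just as it does in the paper.
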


On the other hand, Gromov's question can be restated in the following equivalent form (It is not difficult to prove that the two formulations are equivalent, but we are not going to present the proof in this paper): One can always estimate the number of balanced vertices of a geodesic net in terms of the number of unbalanced vertices \textit{and} the total imbalance, described as follows.

\begin{definition}
	For a vertex $v$, denote by $\textrm{imb}(v)$ the \emph{imbalance} defined as the norm of the sum of unit vectors tangent to the incident edges, i.e. $v$ is balanced iff $\textrm{imb}(v)=0$.
\end{definition}

\begin{conjecture}\label{conj:imbalance}
	There is a function $g:\mathbb{N}\times\mathbb{R}\to\mathbb{N}$ such that for all geodesic nets $G=(B,U,E)$ in the plane with at most $n$ unbalanced vertices (i.e. with $|U|\leq n$) and total imbalance $c$ or less, i.e. with
	\begin{align*}
		\sum_{v\in U}\textrm{imb}(v)\leq c
	\end{align*}
	we have $|B|\leq g(n,c)$.
\end{conjecture}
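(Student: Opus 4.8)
The plan is to control $|B|$ with two scale-invariant ingredients — a length estimate coming from the first variation of length, and the monotonicity formula for stationary $1$-varifolds — and then to convert these analytic bounds into a count by a compactness argument. I work in the flat plane, which is the setting of Gromov's original question; the nonpositively curved case is reached by the usual comparison adjustments to the monotonicity formula. Since $|U|\le n$ and $\sum_{v\in U}\textrm{imb}(v)\le c$ are both invariant under scaling and isometry, I normalize so that the diameter of the net is $1$.

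The first step is a global length bound. A geodesic net is critical for total length under variations that move its vertices, and the first variation along a field $X$ is $-\sum_{v\in V}\ip{X(v)}{F(v)}$, where $F(v)=\sum_{e\ni v}\tau_e(v)$ is the sum of outward unit tangents at $v$; thus $\|F(v)\|=\textrm{imb}(v)$ and $F(v)=0$ exactly at balanced vertices. Evaluating on the position field $X(p)=p$, whose flow rescales the net and hence multiplies its length $L$ by the same factor, gives $L=\bigl|\sum_{v\in U}\ip{v}{F(v)}\bigr|$. Putting the origin inside the net and using $\|v\|\le\mathrm{diam}=1$ yields
\begin{align*}
	L\ \le\ \sum_{v\in U}\textrm{imb}(v)\ \le\ c,
\end{align*}
so the total length is controlled purely by the total imbalance.

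Next I would extract a local length cost at each balanced vertex. Away from the at most $n$ unbalanced vertices the net is a stationary $1$-varifold, so monotonicity applies: writing $R_v=\mathrm{dist}(v,U)$, the ratio $r\mapsto L(B_r(v))/2r$ is non-decreasing on $(0,R_v)$ with $r\to0$ limit equal to the density $\deg(v)/2\ge 3/2$, whence $L(B_{R_v}(v))\ge 3R_v$. In competition with the global bound $L\le c$, this says that balanced vertices far from the boundary are few and well separated, and that the whole configuration consists of finitely many ``sources'' of bounded total strength $c$ feeding junctions each of definite density $\ge 3/2$. To make finiteness precise I would argue by contradiction: given nets $G_k$ with $|U_k|\le n$, total imbalance $\le c$, diameter $1$, and $|B_k|\to\infty$, the uniform bounds $L(G_k)\le c$ and $\mathrm{diam}=1$ give varifold compactness, so a subsequence converges to a finite-length $1$-varifold $V_\infty$ that is stationary off at most $n$ points; by regularity of stationary $1$-varifolds in the plane, $V_\infty$ is again a finite geodesic net with finitely many balanced vertices. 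The contradiction should come from the points where the diverging balanced vertices accumulate: a blow-up there is a stationary cone of quantized density (a half-integer $\ge 3/2$), and monotonicity limits how much density can concentrate on the fixed budget $c$. Making this quantization precise is the \textbf{main obstacle}, because \emph{there is no a priori lower bound on edge lengths or on the mutual distances of balanced vertices}; the net may develop arbitrarily fine features, and it is only the scale-invariance of monotonicity, not any fixed length scale, that can prevent infinitely many junctions from collapsing into one point of bounded density.

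Finally, it is worth recording the equivalent reformulation in Gromov's terms, which both explains the shape of $g$ and offers an alternative route. At each unbalanced vertex $v$ one attaches $\lceil\textrm{imb}(v)\rceil+O(1)$ short new geodesic edges, in generic directions and ending at new degree-one vertices, whose outward tangents are arranged to sum to $-F(v)$; this balances $v$ at the cost of adding $\sum_{v\in U}\bigl(\lceil\textrm{imb}(v)\rceil+O(1)\bigr)\le c+O(n)$ degree-one unbalanced vertices. The new net has only degree-one unbalanced vertices, so a bound $|B|\le f(m)$ for nets with $m$ such vertices — exactly the maximal-number problem of \cite{iejg347253} — gives $|B|\le f(c+O(n))=:g(n,c)$. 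The only care needed in this surgery is to keep the added edges disjoint from the existing net, which is arranged by taking them short and avoiding the finitely many directions already occupied.
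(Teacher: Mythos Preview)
The statement you are addressing is a \emph{conjecture} in the paper, not a theorem; the paper explicitly declines to prove it (and only asserts, without proof, its equivalence with Gromov's original question about degree-one boundary vertices). So there is no proof in the paper to compare your attempt against. What remains is to assess whether your proposal actually establishes the conjecture, and it does not.

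Your length bound $L\le c\cdot\mathrm{diam}$ from the first variation along the position field is correct, and the monotonicity inequality $L(B_r(v))\ge 3r$ at balanced vertices (for $r<\mathrm{dist}(v,U)$) is correct as well. But you yourself identify the gap: the compactness step does not produce a contradiction. A varifold limit of nets $G_k$ with $|B_k|\to\infty$ is indeed a finite geodesic net, but nothing prevents arbitrarily many balanced vertices of $G_k$ from collapsing onto a single point of the limit; lower semicontinuity of density does not bound the \emph{number} of junctions that merge, only their total density, and density at a single limit point can absorb infinitely many degree-$3$ junctions. Concretely, your monotonicity estimate gives no information about balanced vertices lying very close to $U$, and there is no mechanism in your argument preventing them from accumulating there. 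This is exactly the difficulty that makes the conjecture open.

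Your final paragraph is not an alternative route but a restatement of the equivalence the paper alludes to: balancing each $v\in U$ by attaching roughly $\mathrm{imb}(v)$ degree-one spikes converts the problem into a bound $f(m)$ for nets with $m$ degree-one unbalanced vertices. But no such finite $f$ is known; that is precisely Gromov's open question, so invoking it is circular. In short, you have recovered the known analytic estimates and the known reformulation, but the conjecture remains unproved.
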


Note that this conjecture is not true for geodesic nets on arbitrary surfaces, even flat surfaces. On the flat torus, we can take an arbitrary number of closed geodesics whose union is connected. This produces geodesic nets with zero imbalance where each point of intersection is a balanced vertex. More generally, on any surface with periodic geodesics, one can construct geodesic nets with extra balanced vertices by combining geodesic nets with periodic geodesics that intersect it. As long as there is an infinite number of such intersecting closed geodesics, this construction gives an arbitrary number of balanced vertices without adding to the imbalance or the number of unbalanced vertices.

%%%%%%%%%%%%%%%%%%%%%%%%%%%%%%%%%%%%%
\subsection{Regarding the degree of balanced vertices}
%%%%%%%%%%%%%%%%%%%%%%%%%%%%%%%%%%%%%

Note that we require balanced vertices to have degree $3$ or more. In fact allowing degree $2$ balanced vertices would render the question meaningless: Obviously, one could add an arbitrary number of degree $2$ balanced vertices to the edges of any geodesic net.

%%%%%%%%%%%%%%%%%%%%%%%%%%%%%%%%%%%%%
\subsection{Regarding the degree of unbalanced vertices}
%%%%%%%%%%%%%%%%%%%%%%%%%%%%%%%%%%%%%

Note that we do not put any bound on the degree of the unbalanced vertices. In other words, we allow a single unbalanced vertex to be adjacent to an arbitrary number of balanced vertices.

%%%%%%%%%%%%%%%%%%%%%%%%%%%%%%%%%%%%%
\subsection{Previous results}
%%%%%%%%%%%%%%%%%%%%%%%%%%%%%%%%%%%%%

In \cite{iejg347253}, Memarian considered the question on the Euclidean plane with one restriction: each unbalanced vertex has degree $1$. He studied two special cases: If all balanced vertices have degree $3$, he finds a sharp upper bound for what he calls a \emph{3-boundary regular critical graphs}. The bound is achieved by what resembles a tiling of the plane by hexagons. If all balanced vertices have degree $4$, he notes the (presumably not sharp) upper bound given by the maximal number of intersections between straight line segments.

As Memarian points out, though, these two cases depend highly on the geometric restrictions that can be presumed for balanced vertices of degree $3$ or degree $4$: Indeed, for degree $3$, the edges must be arranged equiangularly around the vertex with angles of $120\degrees$ between them. For degree $4$, the vertex and its incident edges are given as the intersection of two straight lines. The example in figure \ref{fig:balancedvertices} shows that for higher degrees, on the other hand, balanced vertices can be quite irregular.

This leaves the problem open for the case that we do not put a limit on the degree of the balanced vertices, and for the case of nonzero curvature. In fact, even the case of a planar geodesic net that has a mix of nothing but degree $3$ and degree $4$ balanced vertices is left open. Furthermore, as stated before, we will not require the unbalanced vertices to have degree $1$ but instead allow arbitrary degree.

As mentioned, we will provide a counterexample that shows that the Theorem fails on the plane with positive curvature. Note that if we considered the round sphere, intersecting great circles will construct an arbitrary number of balanced vertices (see above), so the question about the number of vertices becomes trivial. Instead, in this case, the existence of geodesic nets of particular shapes is a relevant question. There is only a small number of papers in this area. In \cite{Heppes1964}, Heppes classifies so-called \enquote{isogonal nets} on the round 2-sphere which -- using the language of the present paper -- are geodesic nets that have no unbalanced vertices and for which at each balanced vertex the incident edges are equiangularly distributed. In \cite{MR1343696}, Hass and Morgan study the existence of geodesic nets dividing a 2-sphere with a positively curved metric into a specific number of regions. A corollary in their work is that in all convex metrics in a neighbourhood of the round metric on the 2-sphere, there exists a geodesic net homeomorphic to a \enquote{$\theta$-graph} (a graph consisting of three edges meeting in two vertices). This is remarkable since unlike other results it doesn't prove existence of particular geodesic nets in a specific given metric, but in an open set of metrics. They finish with a conjecture that any smooth 2-sphere can be divided into $n$ regions by a geodesic net using nothing but degree $3$ and $4$ vertices. In \cite{MR1618690}, Heppes confirms this conjecture for the case of the round sphere. In \cite{MR2836083} and \cite{MR2171308} on the other hand, Nabutovsky and Rotman study upper bounds on the length of geodesic nets of particular shape.

%%%%%%%%%%%%%%%%%%%%%%%%%%%%%%%%%%%%%
\section{Structure of the proof}
%%%%%%%%%%%%%%%%%%%%%%%%%%%%%%%%%%%%%

From now on, unless specified otherwise, any graph discussed will be a geodesic net $G=(B,U,E)$ as given by Definition \ref{def:geodesicnet}.

We will prove the following reformulation of the main theorem.

\begin{theorem}\label{thm:main}
	Define $f:\mathbb{N}\to\mathbb{N}\cup\{\infty\}$ as follows: $f(n)$ is the smallest number such that $|B|\leq f(|U|)$ is true for all geodesic nets on $\mathbb{R}^2$ with a metric of  nonpositive curvature. Then
	\begin{enumerate*}
		\item $f(0)=f(1)=f(2)=0$
		\item $f(3)=1$
	\end{enumerate*}
\end{theorem}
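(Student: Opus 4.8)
The plan is to establish the easy cases first and then concentrate all the work on $f(3)=1$. For $f(0)=f(1)=f(2)=0$: a geodesic net is connected, so if there were a balanced vertex $v$ and at most two unbalanced vertices, one can derive a contradiction from a first-variation/length argument. Concretely, consider the total length functional $L(G)$ of the net as a function of the positions of the balanced vertices; a balanced vertex is exactly a critical point of $L$ in that variable, and for nonpositively curved metrics $L$ is a convex function of each balanced vertex position (geodesics depend convexly on endpoints in an NPC space). If all but $\le 2$ vertices are balanced, one can push the whole configuration toward the (one or two) unbalanced vertices, strictly decreasing total length unless the net is degenerate — but a genuine geodesic net with a balanced vertex of degree $\ge 3$ cannot collapse, contradiction. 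Equivalently and more elementarily: a balanced vertex needs degree $\ge 3$, so the graph has enough edges that some cycle or some "pendant tree" structure forces, via the convexity/uniqueness of geodesics in NPC, that an extremal balanced vertex (one farthest from the unbalanced vertices in an appropriate sense) fails the balancing condition. I would phrase the $n\le 2$ cases as a short lemma: in an NPC plane, any geodesic net all of whose vertices of degree $\ge 3$ are balanced and which has $\le 2$ other vertices must have $B=\varnothing$.

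For the upper bound $f(3)\le 1$, the core of the paper, the plan is to argue by contradiction: suppose a geodesic net in an NPC plane has exactly three unbalanced vertices $u_1,u_2,u_3$ and at least two balanced vertices. Using lemma \ref{thm:convexhull} (which identifies, for three unbalanced vertices, the boundary vertices with the unbalanced ones, so all balanced vertices lie in the convex hull of $\{u_1,u_2,u_3\}$, a geodesic triangle $T$), I would study the combinatorics of the net inside $T$. The key structural fact is that at each balanced vertex the incident unit tangent vectors sum to zero, which in the plane forces the edges to "spread around" the vertex — in particular no balanced vertex can have all its edges contained in a closed half-plane through it. From this one extracts a "no dead end" principle: following an edge out of a balanced vertex, one can always continue to leave in roughly the same general direction, so every path of edges starting inside $T$ must eventually terminate at one of $u_1,u_2,u_3$. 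I would then set up a careful "angle comparison at the boundary" argument: look at the extreme balanced vertices relative to each side of $T$, or relative to each $u_i$, and use the balancing condition together with the NPC angle inequalities (sum of angles of a geodesic triangle $\le \pi$, and more generally the CAT(0) comparison) to show that having two or more balanced vertices forces two edges to cross transversally, or forces an edge to exit $T$, either of which contradicts the net being embedded with exactly three boundary vertices.

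The construction of the matching lower bound $f(3)\ge 1$ is the Fermat point example of Figure \ref{fig:threenet}: three points spanning a triangle with all angles $<120\degrees$ admit an interior point that is balanced when joined to the three corners, which is classical Euclidean geometry and transparently a geodesic net with $|U|=3$, $|B|=1$; combined with the upper bound this gives $f(3)=1$. I would isolate the existence of such a configuration as an explicit example rather than a general argument, since one example suffices for the lower bound.

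The hard part will be the upper bound $f(3)\le 1$, specifically ruling out two balanced vertices without any degree bound on the unbalanced (or balanced) vertices. The difficulty is that a single unbalanced vertex may be adjacent to arbitrarily many balanced vertices, so purely combinatorial edge-counting is hopeless; the argument must be genuinely geometric, exploiting the interaction between the zero-sum balancing condition and NPC comparison geometry. I expect the proof to hinge on a clever choice of an extremal balanced vertex (e.g. one maximizing distance to a fixed $u_i$, or one lying "outermost" with respect to a support line of the configuration) and a delicate case analysis showing that the balancing condition at that vertex is incompatible with all its neighbours lying on the correct side — essentially a maximum-principle argument for the net. Making this rigorous in the presence of possibly many edges at the extremal vertex, and handling the geodesic (rather than straight-line) nature of the edges via CAT(0) angle inequalities, is where the bulk of the technical work will lie.
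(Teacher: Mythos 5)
Your lower bound (Fermat point) and your treatment of $f(0)=f(1)=f(2)=0$ are essentially fine and in the same spirit as the paper: the paper proves a convex hull property (Lemma \ref{thm:convexhull}) via support lines and Lemma \ref{thm:straightline}, which is a cleaner version of your extremal-vertex/projection sketch, and the $n\le 2$ cases follow because the hull has empty interior. The problem is the upper bound $f(3)\le 1$, which is where all the content of the theorem lies, and there your proposal is a statement of intent rather than an argument. The two concrete mechanisms you name for a contradiction do not work as stated: an edge between two points of the (geodesically convex) triangle spanned by the three unbalanced vertices can never exit that triangle, so no argument can force one to; and a transversal crossing of two edges is not incompatible with anything --- in an embedded net a crossing point is simply a balanced vertex of degree $4$ (the example with four unbalanced vertices has eighteen of them), so producing a crossing yields no contradiction unless you separately show the crossing point cannot be a vertex, which you do not address. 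Likewise the ``extremal balanced vertex / maximum principle'' idea is exactly what fails without further quantitative input: balanced vertices of degree $\ge 5$ can be very irregular (Figure \ref{fig:balancedvertices}), and the whole difficulty of the problem is that no a priori degree bound is assumed.

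What is missing is precisely the machinery the paper builds. Locally, one needs a quantitative restriction valid for \emph{every} degree: the combined angle of any edge at a balanced vertex is at most $240\degrees$ (Lemma \ref{thm:combinedangle}), and when it is $\ge 180\degrees$ at a vertex of degree $\ge 5$ both adjacent angles lie strictly between $60\degrees$ and $120\degrees$ (Lemma \ref{thm:specialcombinedangle}, proved by the nontrivial ``staircase'' argument); nothing in your plan substitutes for this. Globally, the paper does not pick an extremal vertex at all: it takes the boundary walk of the outer face (the circumference), shows every turn there is a first right turn hence has negative turn angle, proves the $60\degrees$ Lemma \ref{thm:60degrees} (resp.\ \ref{thm:60degreesnegative}) to bound the total turn accumulated near each unbalanced vertex by $60\degrees$, and then closes the argument with Gau\ss--Bonnet for essentially simple closed paths (Lemma \ref{thm:gaussbonnetessential}, total turn $=360\degrees$ in the flat case, $\ge 360\degrees$ under nonpositive curvature), forcing the circumference to be a hexagon and ultimately the Fermat tree. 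Your CAT(0) comparison remark gestures at the right tool (the curvature enters only through this Gau\ss--Bonnet inequality), but without the combined-angle bounds, the $60\degrees$ Lemma, and the turn-angle bookkeeping along the circumference, the proposal has no path to excluding a second balanced vertex; as written it is a gap, not an alternative proof.
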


Note that the bound for $n=0,1$ is obvious and the bound for $n=2$ is nearly trivial as will be seen in Lemma \ref{thm:convexhull}. The case for $n=3$ is the actually interesting result that we will prove.

To prove the theorem, we proceed as follows: We will first study the properties of a single balanced vertex in Section \ref{sect:local}. In particular, we will prove restrictions on the angles between the edges. We will use these properties in Section \ref{sect:global} when we turn to global properties on the plane and study how the angles between edges not incident to the same vertex are related by introducing the \emph{turn angle} along a path. We will then prove the result regarding three vertices on the plane in Section \ref{sect:proof3}. The results of Sections \ref{sect:global} and \ref{sect:proof3} will then be generalized to the case of nonpositive curvature in Section \ref{sect:negative}.

This theorem obviously asks for an extension to larger $n$ or positive curvature. We will construct an example for a gedesic net with $n=4$ in Section \ref{sect:construct4}. Furthermore, in Section \ref{sect:positive}, we will construct a geodesic net on a surface of positive curvature with just three unbalanced vertices but more than one balanced vertex. This example shows that the Main Theorem can't be true for metrics of positive curvature on $\mathbb{R}^2$, even when no closed geodesics exist.

%%%%%%%%%%%%%%%%%%%%%%%%%%%%%%%%%%%%%
\section{Local Properties}
%%%%%%%%%%%%%%%%%%%%%%%%%%%%%%%%%%%%%
\label{sect:local}

In the following, we prove helpful lemmas that describe local properties in the sense that they \enquote{zoom in} on a single vertex without considering properties of any other vertices of the geodesic net. It is important to point out that these local properties apply to the vertices of a geodesic net on any surface, no matter the curvature since we only consider the tangent space at a single vertex.

\subsection{General Local Properties}

\begin{definition}
	Generally, if we consider several edges incident to the same vertex, we enumerate them in counterclockwise order, e.g. when we say \enquote{$b$ directly follows $a$}.
\end{definition}

Note the following two facts:

\begin{lemma}\label{thm:straightline}
	If we draw a geodesic through any balanced vertex, there must always be an edge on each side of that line.
\end{lemma}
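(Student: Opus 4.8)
The plan is to argue by contradiction purely in the tangent space $T_v\Sigma$ at the balanced vertex $v$, using only the defining balance condition $\sum_i u_i=0$, where $u_1,\dots,u_k\in T_v\Sigma$ are the unit vectors tangent at $v$ to the edges incident to $v$ and directed away from $v$ (so $k=\deg v\geq 3$). Since the geodesic $\gamma$ drawn through $v$ and the edges are all geodesics, whether an edge leaves $v$ to one side of $\gamma$ or the other is determined entirely by the position of its tangent vector relative to the tangent line $\ell:=T_v\gamma\subset T_v\Sigma$, a line through the origin.

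First I would choose a unit normal $n\perp\ell$, so that near $v$ the two open sides of $\gamma$ correspond (via $\exp_v$) to the half-planes $\{x\in T_v\Sigma:\ip{x}{n}>0\}$ and $\{x:\ip{x}{n}<0\}$. Suppose, for contradiction, that some side of $\gamma$ contains no edge; after possibly replacing $n$ by $-n$ we may assume it is the side $\{\ip{x}{n}>0\}$, i.e. $\ip{u_i}{n}\leq 0$ for every $i$. Now take the inner product of the balance relation with $n$: linearity gives $\sum_i\ip{u_i}{n}=\ip{\sum_i u_i}{n}=0$, a sum of nonpositive terms equal to zero, hence $\ip{u_i}{n}=0$ for all $i$. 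Thus every $u_i$ lies on the line $\ell=n^{\perp}$.

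But $\ell$ contains exactly two unit vectors, say $\pm w$, so at most two of the $u_i$ can be distinct. Moreover distinct edges incident to $v$ must have distinct tangent directions at $v$: two geodesic edges issuing from $v$ with the same initial direction coincide on a neighbourhood of $v$, contradicting that $G$ is an embedded graph. Hence $k\leq 2$, contradicting $\deg v\geq 3$, and so each side of $\gamma$ must contain at least one edge.

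The argument is short, and the only step needing care is the last one: the balance condition alone does permit all tangent vectors to lie on $\ell$ (for instance two edges in direction $w$ and two in direction $-w$), so it is essential to invoke embeddedness of $G$ together with uniqueness of geodesics with prescribed initial data to rule this out. This is the main (and only) obstacle.
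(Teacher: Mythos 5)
Your proposal is correct and takes essentially the same approach as the paper: it projects the balancing condition onto the normal direction of the geodesic and combines this with degree $\geq 3$ and the fact that at most two incident edges can have tangent vectors on the line. You merely organize it as a contradiction and make explicit the embeddedness/uniqueness-of-geodesics point that the paper's proof leaves implicit when it "discounts" the two edges possibly lying on the geodesic.
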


\begin{proof}
	Recall that a balanced vertex has at least degree $3$ which means even after discounting for the possibility that two edges lie on the geodesic, there must be another edge lying on one side of it. To balance the unit vector parallel to that edge, one needs an edge on the other side of that geodesic, too.
\end{proof}

\begin{lemma}\label{thm:lessthan180}
	The angle between an edge at a balanced vertex and its immediately following edge must be less than $180\degrees$.
\end{lemma}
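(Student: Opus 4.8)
The plan is to argue by contradiction: suppose the counterclockwise angle from an edge $e$ at the balanced vertex $v$ to its immediately following edge $e'$ is at least $180\degrees$. Label the edges incident to $v$ as $e_1,\dots,e_k$ in counterclockwise order, with unit tangent vectors $u_1,\dots,u_k$ directed away from $v$, and write $\alpha_j$ for the counterclockwise angle from $e_j$ to $e_{j+1}$ (indices mod $k$). We may assume the offending pair is $(e_k,e_1)$, so $\alpha_k\ge 180\degrees$. Since $\sum_j\alpha_j=360\degrees$, the remaining angles satisfy $\alpha_1+\dots+\alpha_{k-1}\le 180\degrees$, which means that all of $u_1,\dots,u_k$ lie inside a single closed arc of the unit circle in $T_v\Sigma$ of angular length at most $180\degrees$, namely the arc swept counterclockwise from $u_1$ to $u_k$.

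Next I would pick the unit vector $w$ bisecting this arc. Then every $u_j$ makes an angle of at most $90\degrees$ with $w$, so $\ip{u_j}{w}\ge 0$ for all $j$. Because $v$ is balanced, $\sum_j u_j=0$, hence $0=\ip{\sum_j u_j}{w}=\sum_j\ip{u_j}{w}$ is a sum of nonnegative terms; therefore $\ip{u_j}{w}=0$ for every $j$. In the plane this forces every $u_j$ to be one of the two unit vectors orthogonal to $w$. Since $k\ge 3$, the pigeonhole principle gives two distinct edges leaving $v$ in exactly the same direction; but two geodesics with the same initial point and initial velocity coincide, so these two edges would overlap in a segment, contradicting that $G$ is embedded. (Alternatively, once one knows all edge directions sit in a closed half-plane bounded by $w^\perp$, one can invoke Lemma \ref{thm:straightline} using the geodesic through $v$ with direction orthogonal to $w$, provided one separately disposes of the case where an edge lies on that geodesic.)

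The only subtlety — and the single place where one must be slightly careful — is the boundary case $\alpha_k=180\degrees$, i.e. when $e_1$ and $e_k$ are exactly antipodal and all remaining edges could a priori lie on the same line; this is precisely the case that the overlapping-edges/embeddedness observation rules out. The strict case $\alpha_k>180\degrees$ is easier: the arc then has length strictly below $180\degrees$, so $\ip{u_j}{w}>0$ for every $j$ and $\sum_j u_j\ne 0$ outright. I expect the write-up to be short, requiring nothing beyond elementary planar geometry in the single tangent space $T_v\Sigma$.
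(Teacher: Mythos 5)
Your argument is correct, and it takes a (mildly) different route from the paper's. The paper disposes of this lemma in one line by reducing it to Lemma \ref{thm:straightline}: if the angle from an edge to its immediate successor were $180\degrees$ or more, the open sector between them contains no edges but contains an open half-plane bounded by a geodesic through the vertex, contradicting that lemma. You instead rerun the balancing computation directly at the vertex: all unit tangent vectors lie in a closed arc of length at most $180\degrees$, pairing with the bisector $w$ gives $0=\sum_j\ip{u_j}{w}$ as a sum of nonnegative terms, hence every $u_j\perp w$, and then degree at least $3$ together with uniqueness of geodesics and embeddedness yields the contradiction. What your version buys is that the boundary case (angle exactly $180\degrees$) is isolated and settled explicitly, via the observation that two edge-germs leaving $v$ with the same initial velocity must overlap; in the paper this point is hidden inside the proof of Lemma \ref{thm:straightline}, which \enquote{discounts the possibility that two edges lie on the geodesic} and thus implicitly uses that distinct edges cannot leave $v$ in the same direction. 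What the paper's reduction buys is brevity and reuse of the already-established lemma; your parenthetical alternative is exactly that reduction. One small refinement: the pigeonhole step produces two edge-\emph{ends} with the same direction, and if both ends belong to a single loop edge at $v$, uniqueness of geodesics forces the loop to retrace itself, so embeddedness is violated in that case as well; a half-sentence covering this makes the argument airtight if loops are permitted.
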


\begin{proof}
	Otherwise there would be a geodesic through the vertex such that there are no edges on one side of it, contradicting Lemma \ref{thm:straightline}.
\end{proof}

\subsection{Combined Angles}

\begin{definition}
	Consider a balanced vertex $v$ and three incident edges $a,b,c$ following in that counterclockwise order without any edges in between. Then the \emph{combined angle of $b$ at $v$} is the total angle from $a$ to $b$ to $c$ (see Figure \ref{fig:combinedangle}).
\end{definition}

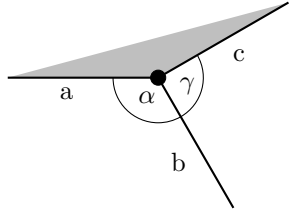
\begin{figure}[t]
\centering
\begin{tikzpicture}[scale=2]
	\node at (180:1) (aend) {};
	\node at (300:1) (bend) {};
	\node at (30:1) (cend) {};
	\path [fill=lightgray] (0,0) -- (cend.center) -- (aend.center) -- cycle;
	\draw [thick] (0,0) -- node [below left] {a} (aend.center);
	\draw [thick] (0,0) -- node [below left] {b} (bend.center);
	\draw [thick] (0,0) -- node [below right] {c} (cend.center);
	\draw [fill=black] (0,0) circle [radius=0.05];
	\draw (180:0.3) arc [radius=0.3,start angle = 180,end angle = 300];
	\draw (300:0.3) arc [radius=0.3,start angle = 300,end angle = 390];
	\node at (240:0.15) {$\alpha$};
	\node at (345:0.2) {$\gamma$};
\end{tikzpicture}
\caption{The combined angle of $b$ at this vertex is $\alpha+\gamma$. Note that all other edges incident to the vertex are in the grey area.}
\label{fig:combinedangle}
\end{figure}

\begin{lemma}[General Combined Angle Lemma]\label{thm:combinedangle}
	The combined angle of any edge at a balanced vertex is...
	\begin{enumerate*}
		\item ... equal to $240\degrees$ if the vertex has degree $3$.
		\item ... equal to $180\degrees$ if the vertex has degree $4$.
		\item ... strictly smaller than $180\degrees+2\arcsin 1/(n-1)$ if the vertex has degree $n\geq 5$.
	\end{enumerate*}
	In particular the combined angle is always less or equal than $240\degrees$ and strictly so if the vertex has degree larger than three.
\end{lemma}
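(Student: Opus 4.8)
The plan is to carry out the whole argument inside the tangent plane $T_v\Sigma\cong\mathbb{R}^2$ at the balanced vertex $v$, splitting on the degree $n$. Denote by $b$ the edge whose combined angle is in question, by $a$ the edge directly preceding it and by $c$ the edge directly following it (counterclockwise, with nothing in between), and write $\alpha$ for the angle from $a$ to $b$ and $\gamma$ for the angle from $b$ to $c$, so the combined angle is $W:=\alpha+\gamma$; by Lemma~\ref{thm:lessthan180}, $\alpha,\gamma\in(0\degrees,180\degrees)$. Normalizing coordinates so that the unit tangent vector $\hat b$ of $b$ equals $(0,1)$, the unit vectors of $a$ and $c$ become $\hat a=(\sin\alpha,\cos\alpha)$ and $\hat c=(-\sin\gamma,\cos\gamma)$, and balance at $v$ says the $n$ incident unit vectors sum to $0$. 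When $n=3$ this immediately gives $\hat a+\hat b=-\hat c$, hence $\langle\hat a,\hat b\rangle=-\frac{1}{2}$ and likewise $\langle\hat b,\hat c\rangle=-\frac{1}{2}$, so both angles equal $120\degrees$ and $W=240\degrees$.

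For $n=4$, let $\hat d$ be the fourth unit vector; then $|\hat a+\hat b+\hat c|^2=|{-\hat d}|^2=1$. Expanding in the coordinates above and using $\cos\alpha\cos\gamma-\sin\alpha\sin\gamma=\cos W$ reduces this to
\begin{align*}
	1+\cos W+\cos\alpha+\cos\gamma=0 .
\end{align*}
Then I would rewrite the left-hand side with the half-angle identities $1+\cos W=2\cos^2(W/2)$, $\cos\alpha+\cos\gamma=2\cos(W/2)\cos((\alpha-\gamma)/2)$, and $\cos(W/2)+\cos((\alpha-\gamma)/2)=2\cos(\alpha/2)\cos(\gamma/2)$, obtaining $4\cos(W/2)\cos(\alpha/2)\cos(\gamma/2)=0$. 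Since $\alpha/2,\gamma/2\in(0\degrees,90\degrees)$, the last two cosines are strictly positive, so $\cos(W/2)=0$ and $W=180\degrees$.

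The substantive case is $n\geq5$. If $W\leq180\degrees$ the stated bound holds trivially, so assume $W>180\degrees$. The point is that the closed angular wedge $\Omega$ running from $c$ to $a$ and avoiding $b$ — the grey region of Figure~\ref{fig:combinedangle} — has width $\delta:=360\degrees-W<180\degrees$ and contains the directions of all incident edges other than $b$, i.e.\ of $n-1$ edges: the two boundary rays of $\Omega$ are $\hat a$ and $\hat c$, and the remaining $n-3\geq2$ edges sit strictly inside. By balance these $n-1$ unit vectors sum to $-\hat b$, which has norm $1$; letting $\hat u$ be the bisecting unit vector of $\Omega$, each of them makes an angle at most $\delta/2<90\degrees$ with $\hat u$, with equality only for $\hat a$ and $\hat c$, so
\begin{align*}
	1=|{-\hat b}|\geq\langle-\hat b,\hat u\rangle=\sum_{e\in\Omega}\langle\hat e,\hat u\rangle>(n-1)\cos(\delta/2).
\end{align*}
Hence $\cos(\delta/2)<1/(n-1)$, and since $\delta/2=180\degrees-W/2\in(0\degrees,90\degrees)$ this is equivalent to $\delta/2>\arccos(1/(n-1))=90\degrees-\arcsin(1/(n-1))$, i.e.\ to $W<180\degrees+2\arcsin(1/(n-1))$. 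The closing ``in particular'' is then immediate: for $n\geq5$ one has $2\arcsin(1/(n-1))\leq2\arcsin(1/4)<60\degrees$, so all three cases yield a combined angle $\leq240\degrees$, and strictly less whenever $n>3$.

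I expect the $n=4$ case to be the main obstacle, since the conclusion there is an exact equality with no room for estimates: the whole weight falls on choosing the right trigonometric manipulation so that the balance identity factors, and on keeping the coordinate conventions consistent with the counterclockwise ordering. The $n\geq5$ case is conceptually central but short once one decides to project the $n-1$ ``non-$b$'' unit vectors onto the bisector of their wedge; the only delicate point is the strict inequality, which relies on at least one edge lying strictly inside $\Omega$ — guaranteed precisely because there is an edge other than $a$, $b$, $c$.
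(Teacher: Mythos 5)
Your proposal is correct and, in the substantive case $n\geq 5$, follows essentially the same route as the paper: project all incident unit vectors other than $b$ onto the bisector of the wedge spanned by $a$ and $c$ and play the resulting lower bound against the balance condition $\sum \hat e=-\hat b$ (you derive the bound directly, the paper phrases it as a contradiction, but the estimate is identical). Your explicit verifications of the degree $3$ and degree $4$ cases, which the paper dismisses as obvious, are also correct.
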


\begin{proof}
	\begin{enumerate*}
		\item is obvious.
		\item is obvious.
		\item consider a vertex of degree $n\geq 5$. Assume that the combined angle at an edge $b$ is $180\degrees+2\arcsin 1/(n-1)$ or more. Call the two edges realizing that angle $a$ and $c$. Take $v$ to be the unit vector that bisects the smaller of the two angles between $a$ and $c$. Let $\{e_i\}$ be the edges other than $a$, $b$, $c$. There are at least two such edges since the degree is $5$ or more. Note that the $e_i$ lie on the side of the angle formed by $a$ and $c$ that does not contain $b$. By a slight abuse of notation we use the same names for the edges and the corresponding unit vectors (Compare figure \ref{fig:combinedangleproof}). Since the combined angle is $180\degrees+2\arcsin 1/(n-1)$ or more, basic trigonometry yields:
			\begin{align*}
				\ip{e_i}{v}>\ip{a}{v}\geq\sin((180\degrees+2\arcsin1/(n-1)-180\degrees)/2)=1/(n-1)
			\end{align*}
		
			We deduce:
			\begin{align*}
				0=&\ip{b+a+c+\sum e_i}{v}\\
				=&\underbrace{\ip{b}{v}}_{\geq-1} + \underbrace{\ip{a}{v}}_{\geq1/(n-1)}+ \underbrace{\ip{c}{v}}_{\geq1/(n-1)} + \underbrace{\sum\ip{e_i}{v}}_{>(n-3)/(n-1)}\\
				>&-1+1/(n-1)+1/(n-1)+(n-3)/(n-1)=0
			\end{align*}
			This is a contradiction.
	\end{enumerate*}
\end{proof}

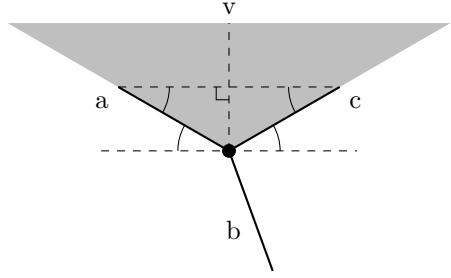
\begin{figure}
\centering
\begin{tikzpicture}[scale=1.7]
	\path [fill=lightgray] (150:2) -- (0,0) -- (30:2) -- cycle;
	\draw [dashed] (-1,0) -- (1,0);
	\draw [fill=black] (0,0) circle [radius=0.05];
	\draw [thick] (0,0) -- node [below left] {b} (-70:1);
	\draw [thick] (0,0) -- (150:1) node [below left] {a};
	\draw [thick] (0,0) -- (30:1) node [below right] {c};
	\draw [dashed] (0,0) -- (90:1) node [above] {v};
	\draw [dashed] (150:1) -- (30:1);
	\draw (180:0.4) arc [radius=0.4,start angle = 180,end angle = 150];
	\draw (0:0.4) arc [radius=0.4,start angle = 0,end angle = 30];
	\draw (150:0.6) arc [radius=0.4,start angle = -30,end angle = 0];
	\draw (30:0.6) arc [radius=0.4,start angle = 210,end angle = 180];
	\draw (-0.1,0.5) -- (-0.1,0.4) -- (0,0.4);
\end{tikzpicture}
\caption{Proof of Lemma \ref{thm:combinedangle}. Note that if we denote the combined angle of $b$ by $\delta$, then the marked angles are equal to $(\delta-180\degrees)/2$ which leads to the stated formula for the projection of the vectors onto $v$. All other edges/vectors must lie in the grey area.}
\label{fig:combinedangleproof}
\end{figure}

In the next lemma we will show that for vertices of degree $n\neq 4$, only at a vertex of odd degree can we have a combined angle of $180\degrees$ or more, and even then particular restrictions to the angles apply.

\begin{lemma}[Special Combined Angle Lemma]\label{thm:specialcombinedangle}
	Let $a,b,c$ be three directly following edges of a balanced vertex of degree $n\geq 5$ with a combined angle of $b$ that is $180\degrees$ or more. Then the vertex must have odd degree.
	
	Furthermore, denote by $\alpha$ the angle between $a$ and $b$ and by $\gamma$ the angle between $b$ and $c$ (i.e. $\alpha+\gamma$ is the combined angle of $b$) then:
	\begin{enumerate*}
		\item $60\degrees<\alpha< 120\degrees$
		\item $60\degrees<\gamma< 120\degrees$
	\end{enumerate*}
\end{lemma}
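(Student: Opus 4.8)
The plan is to mimic the projection argument used in the proof of the General Combined Angle Lemma (Lemma \ref{thm:combinedangle}), but now exploiting the hypothesis that the combined angle is at least $180\degrees$ rather than assuming it is large. Let $v$ be a balanced vertex of degree $n \geq 5$, let $a,b,c$ be three directly following edges (in counterclockwise order, with no edge in between), and suppose the combined angle $\alpha + \gamma \geq 180\degrees$, where $\alpha = \angle(a,b)$ and $\gamma = \angle(b,c)$. Abusing notation, write $a,b,c$ and $e_1,\dots,e_{n-3}$ also for the corresponding unit tangent vectors, so that $b + a + c + \sum_i e_i = 0$. The key geometric fact (coming from the definition of combined angle, exactly as in Figure \ref{fig:combinedangleproof}) is that all the vectors $e_i$ lie in the closed half-plane bounded by the line through $a$ and $c$ on the side \emph{not} containing $b$; this is where Lemma \ref{thm:lessthan180} is used implicitly, since it forbids any $e_i$ from lying on $b$'s side.

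First I would prove the parity statement. Consider the line $\ell$ through $v$ in the direction of $b$ (and $-b$). By Lemma \ref{thm:straightline} there must be an edge strictly on each side of $\ell$. I claim that when $\alpha + \gamma \geq 180\degrees$, the vector $-b$ itself cannot lie strictly between $a$ and $c$ on the far side, so some $e_i$ must point into the direction very close to $-b$; more carefully, I would project everything onto $b$ and onto the direction perpendicular to $b$. Projecting the balancing equation onto the unit vector perpendicular to $b$ (call it $b^\perp$, oriented so that $a$ has nonnegative $b^\perp$-component when $\alpha \le 180\degrees$), the contributions of $a$ and $c$ have opposite signs precisely when $\alpha + \gamma > 180\degrees$, and the $e_i$'s are constrained in sign. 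The cleanest route to "odd degree" is: if $n$ were even, one can pair up the edges into $n/2$ pairs summing to zero is \emph{not} forced, so instead I would argue that an even-degree vertex with a combined angle $\geq 180\degrees$ forces, via the projection onto $v :=$ the bisector of the $a$–$c$ angle on $b$'s side, a strict inequality $0 = \ip{b+a+c+\sum e_i}{v} > 0$ unless equality holds throughout, and equality forces $\ip{b}{v} = -1$, i.e. $b = -v$, together with each $e_i$ collinear with $a$ or with $c$; collinearity of distinct unit vectors among the $e_i$ with $a$ or $c$ is impossible unless they coincide with $a$ or $c$ (contradicting that $a,b,c,e_i$ are distinct edges) — so in fact the degenerate configuration can only be the symmetric one where the $e_i$ split evenly, and counting shows the total degree $n = 3 + (n-3)$ must be odd. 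I expect this parity step to require a careful case analysis of the boundary (equality) case, and that is where I anticipate the main obstacle.

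Next, for the angle bounds, suppose toward a contradiction that $\alpha \geq 120\degrees$ (the case $\alpha \leq 60\degrees$, and both cases for $\gamma$, being symmetric). Then since the combined angle $\alpha + \gamma \leq 240\degrees$ by Lemma \ref{thm:combinedangle}, we get $\gamma \leq 120\degrees$, and since $\alpha + \gamma \geq 180\degrees$ we get $\gamma \geq 60\degrees$. I would project the balancing equation onto the unit vector $w$ pointing opposite to $a$ (i.e. $w = -a$). Each $e_i$ and $c$ lie in the half-plane on the far side of the $a$–$c$ line from $b$; using $\alpha \geq 120\degrees$ one checks $\ip{c}{w} = \cos(\angle(c,-a)) = \cos(180\degrees - (\alpha+\gamma)) \geq \cos(?)$ — bounding this and the $\ip{e_i}{w}$ terms from below, while $\ip{b}{w} = \cos(180\degrees - \alpha) = -\cos\alpha \geq 1/2$ and $\ip{a}{w} = -1$, and summing yields $0 = \ip{b+a+c+\sum e_i}{w}$ with the right-hand side strictly positive, a contradiction. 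The trigonometric bookkeeping here is routine; the only subtlety is making sure the geometric constraint "$e_i$ on the far side of line $ac$" is used in the sharpest form, namely that each $\ip{e_i}{w}$ is bounded below using both $\alpha \geq 120\degrees$ and $\gamma \geq 60\degrees$. Once $60\degrees < \alpha < 120\degrees$ is established, the identical argument with $a$ and $c$ interchanged gives $60\degrees < \gamma < 120\degrees$, completing the proof.
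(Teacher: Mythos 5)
Your proposal has genuine gaps in both halves. The decisive one is in the angle-bound step: projecting the balancing equation onto $w=-a$ cannot produce the strict positivity you claim, because the remaining edges $e_i$ are only constrained to lie in the angular sector from $c$ counterclockwise to $a$ (the side away from $b$), and that sector contains directions arbitrarily close to $a$ itself, so $\ip{e_i}{w}$ can be as close to $-1$ as one likes; with $n-3\geq 2$ such terms the lower bound on $\ip{b+a+c+\sum e_i}{w}$ is negative and no contradiction follows. A concrete warning sign is that your argument nowhere uses $n\geq 5$, yet the statement is false for $n=4$: the four unit vectors at angles $180\degrees$, $310\degrees$, $0\degrees$, $130\degrees$ balance, the combined angle of the edge at $310\degrees$ is exactly $180\degrees$, and $\alpha=130\degrees>120\degrees$. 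So no single fixed-direction projection of the balancing equation can prove the bound; any correct proof must exploit the fact that there are at least \emph{four} vectors $e_i$ in the closed half-plane whose sum $-b$ has norm exactly $1$. That is precisely what the paper's proof does: it tracks the partial sums $s_k$ of the $e_i$ and shows (via the ``staircase of unit circles'' argument) that the total sum is confined to the interior of a staircase region based at $\langle R-L,0\rangle$, which forces $R=L$ and places $-b$ within $30\degrees$ of the vertical, giving $60\degrees<\alpha<120\degrees$ (and $\gamma$ by reflection).

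The parity half is also not established. Your plan to rerun the bisector projection of Lemma \ref{thm:combinedangle}(c) and analyze the ``equality case'' has no traction here: when the combined angle lies in $[180\degrees,\,180\degrees+2\arcsin 1/(n-1))$ the projected inequality is not violated at all (the lower bounds sum to something negative), so there is no near-equality configuration to classify, and the concluding remark that ``counting shows $n=3+(n-3)$ must be odd'' does not yield parity of anything. In the paper the oddness of $n$ is exactly the statement $R=L$ (equal numbers of the $e_i$ pointing left and right of the vertical), and it comes out of the same staircase argument as the angle bounds rather than from a separate projection.
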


Note that the result of $n$ being odd will not be used when we apply this lemma later. But we get it as a \enquote{gratuitous result} which is worth noting.

\begin{proof}
	We will prove the inequalities regarding $\alpha$. The case for $\gamma$ then follows by reflection and relabeling.
	
	We arrange and label the edges as follows: Choose a coordinate system in the tangent space at the vertex such that the unit vector corresponding to $a$ lies on the negative $x$-axis. Since the combined angle at $b$ is $180\degrees$ or more, this implies that only the unit vector of $b$ could lie in the lower half plane. And in fact by Lemma \ref{thm:lessthan180}, it must lie in the lower half plane (see figure \ref{fig:combinedangle}).
	
	From now on, we denote by $b$ the unit vector corresponding to that edge and by $e_1:=a,e_2:=c,e_3,\dots,e_{j}$ ($j:=n-1\geq4$) the unit vectors corresponding to all other edges, including $a$ and $c$ as the first two. By the above setup, all $e_i$ have a nonnegative $y$-component and only $e_1=a$, $e_2=c$ can have a zero $y$-component.
	
	Note that if we define $s:=\sum e_i$, then $s=-b$ by the balancing condition and therefore $s$ is a unit vector. This means the lemma follows if we prove the following two claims:
	\begin{enumerate***}
		\item $j=n-1$ must be even.
		\item If $j=n-1$ is even, then the angle between $s$ and the positive $y$-axis lies within $\pm 30\degrees$ (Then, the angle $\alpha$ is between $60\degrees$ and $120\degrees$).
	\end{enumerate***}
  	We will now prove (a') and (b').
  	
  	\textbf{Claim:} None of the $e_i$ can lie on the positive $y$-axis.\\
  	If that were not the case, note that there is at least four $e_i$ in total, one of them lying on the $y$-axis. Even if another two lie on the $x$-axis (and therefore have zero $y$-component), there is a fourth one that has a positive $y$-component. Therefore the sum $s=\sum e_i$ would have a $y$-component of more than $1$ which contradicts the fact that $s$ must be a unit vector.
  	
  	Due to the above claim, we can group the $e_i$ according to the following rules:
  	\begin{itemize*}
  		\item In one group, we have all $e_i$ pointing to the left (negative x-coordinate). Call it the \emph{left group}.
  		\item Accordingly, the other vectors $e_i$ form the \emph{right group}.
  	\end{itemize*}
  	Note that by our setup, $a$ is in the left group. Also note that by the General Combined Angle Lemma \ref{thm:combinedangle} we have $\alpha+\gamma\leq 240\degrees$ and therefore $c$ must be in the right group.
  	
  	We denote the number of left and right vectors by $L$ and $R$ respectively. Note that $R+L=j$.
  	
  	In the following, we will use the notation $\langle x,y\rangle$ to mean a vector in $\mathbb{R}^2$ with coordinates $x$ and $y$.
  	
  	Let $e:\{-1,1\}\times [0\degrees,90\degrees)\to \mathbb{R}^2$ be given by
  	\begin{align*}
  		e(C,\theta):=\langle C\cos\theta,\sin\theta\rangle
  	\end{align*}
  	We can rewrite each of the right vectors as $e_i=e(1,\theta_i)$ where $\theta_i$ is the angle between the positive $x$-axis and $e_i$, and each of the left vectors as $e_i=e(-1,\theta_i)$ where $\theta_i$ is the angle between the negative $x$-axis and $e_i$. Note that in either case, $0\degrees\leq\theta_i<90\degrees$. Using this notation, we can write
  	\begin{align*}
  		s=e_1+e_2+\dots+e_j=e(C_1,\theta_1)+e(C_2,\theta_2)+\dots+e(C_j,\theta_j)
  	\end{align*}
  	In the proof below, we will start at $\langle R-L,0\rangle$ and arrive at the actual vector sum $s$ by iteration as follows:
  	\begin{align*}
  		s_0&=e(C_1,0\degrees)+e(C_2,0\degrees)+e(C_3,0\degrees)+\dots+e(C_j,0\degrees)=\langle R-L,0\rangle\\
  		s_1&=e(C_1,\theta_1)+e(C_2,0\degrees)+e(C_3,0\degrees)+\dots+e(C_j,0\degrees)\\
  		s_2&=e(C_1,\theta_1)+e(C_2,\theta_2)+e(C_3,0\degrees)+\dots+e(C_j,0\degrees)\\[-2mm]
  		&\vdots\\[-2mm]
  		s_j&=e(C_1,\theta_1)+e(C_2,\theta_2)+e(C_3,\theta_3)+\dots+e(C_j,\theta_j)=s
  	\end{align*}
  	We will use this iterative process to prove that $s$ will lie in a \enquote{staircase region}, defined as follows (see also figure \ref{fig:arcfact}):
  	
  	\textbf{Definition:} We define the \emph{leftwards staircase of unit circles} starting at $\langle R-L,0\rangle$ as the union of the counterclockwise quarter arcs of unit circles starting at $\langle R-L-\ell,\ell\rangle$ and ending at $\langle R-L-(\ell+1),\ell+1\rangle$ for $\ell=0,1,2,\dots$. We define the \emph{rightwards staircase of unit circles} starting at $\langle R-L,0\rangle$ as the union of the clockwise quarter arcs of unit circles starting at $\langle R-L+\ell,\ell\rangle$ and ending at $\langle R-L+\ell+1,\ell+1\rangle$ for $\ell=0,1,2,\dots$. The region between these two staircases, including the boundary, is called the \emph{staircase region}.
  	
  	\clearpage
  	Returning to our definition of the map $e(C,\theta)$ and of the sequence $s_k$ above, we can describe the step from $s_k$ to $s_{k+1}$ as follows:
  	\begin{itemize*}
  		\item If $e_{k+1}=e(1,\theta_{k+1})$ is a right vector, we start at $s_k$ and go along a counterclockwise arc on a unit circle, with initial tangent vector pointing in the positive $y$-direction, to $s_{k+1}$.
  		\item If $e_{k+1}=e(-1,\theta_{k+1})$ is a left vector, we start at $s_k$ and go along a clockwise arc on a unit circle, with initial tangent vector pointing in the positive $y$-direction, to $s_{k+1}$.
  	\end{itemize*}
  	In either case, the arc is less than a quarter circle since $0\leq\theta_i<90\degrees$.
  		
	\begin{figure}
	\centering
	\begin{tikzpicture}
		\path [fill=lightgray] (-4,4) arc [radius=1,start angle =90,end angle =0] arc [radius=1,start angle =90,end angle =0] arc [radius=1,start angle =90,end angle =0] arc [radius=1,start angle =90,end angle =0] arc [radius=1,start angle =180,end angle =90] arc [radius=1,start angle =180,end angle =90] arc [radius=1,start angle =180,end angle =90] arc [radius=1,start angle =180,end angle =90]
		-- cycle;
		\draw [help lines] (-4,0) grid (4,4);
		\draw [thick] (-4,0) -- (4,0);
		\draw (0,0) arc [radius=1,start angle =0,end angle =90] arc [radius=1,start angle =0,end angle =90] arc [radius=1,start angle =0,end angle =90] arc [radius=1,start angle =0,end angle =90];
		\draw (0,0) arc [radius=1,start angle = 180, end angle=90] arc [radius=1,start angle = 180, end angle=90] arc [radius=1,start angle = 180, end angle=90] arc [radius=1,start angle = 180, end angle=90];
		\draw [dashed,line width=1.5] (0,0) arc [radius=1,start angle =0,end angle =50] coordinate (s1) arc [radius=1,start angle =180,end angle =100] coordinate (s2) arc [radius=1,start angle =180,end angle =140] coordinate (s3) arc [radius=1,start angle =0,end angle =70] coordinate (s4) arc [radius=1,start angle =180,end angle =140];
		\draw [fill=black] (0,0) circle [radius=0.05] node [below] {$\langle R-L,0\rangle=s_0$};
		\draw [fill=black] (s1) circle [radius=0.05] node [right] {$s_1$};
		\draw [fill=black] (s2) circle [radius=0.05] node [right] {$s_2$};
		\draw [fill=black] (s3) circle [radius=0.05] node [right] {$s_3$};
		\draw [fill=black] (s4) circle [radius=0.05] node [left] {$s_4$};
	\end{tikzpicture}
	\caption{The staircase region at $\langle R-L,0\rangle$, with an  integer lattice added for scale. Using the Arc Fact iteratively to go from $s_k$ to $s_{k+1}$, we know that a concatenation of arcs on a unit circle of less than $90\degrees$ that starts at $\langle R-L,0\rangle$ as depicted by the dashed line can never leave the grey area.}
	\label{fig:arcfact}
	\end{figure}
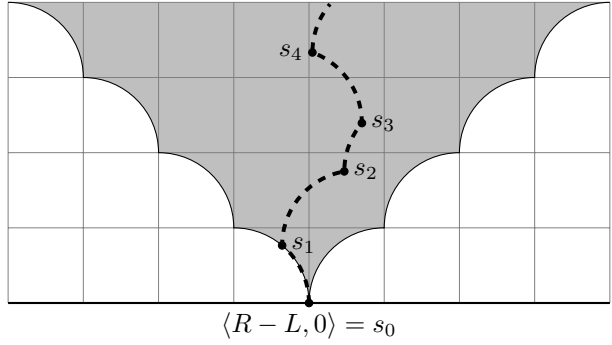
	
	We will for now suppose that the following fact is true and prove it later.

 	\textbf{Arc Fact:} A circular arc $\gamma(\theta)=\langle x,y\rangle+e(C,\theta)$ for $0\degrees\leq\theta< 90\degrees$ has the following properties for $C=\pm 1$ provided that $\gamma(0\degrees)$ is in the staircase region:
 	\begin{enumerate**}
 		\item $\gamma(\theta)$ is in the staircase region for any $0\degrees\leq\theta<90\degrees$.
 		\item $\gamma(0\degrees)$ is the only point on the arc that can lie in a corner of the staircase region.
 		\item If $\gamma(0\degrees)$ is not in one of the corners of the staircase region, then $\gamma(\theta)$ lies in the \emph{interior} of the staircase region for $0\degrees<\theta<90\degrees$.
 	\end{enumerate**}
 	The iterative application of this Arc Fact can be restated in a more intuitive way using figure \ref{fig:arcfact} as follows: Take a pen and start at $\langle R-L,0\rangle$. If now, one is only allowed to draw arcs along a (clockwise or counterclockwise) unit circle of less than $90\degrees$ (like the dashed line in the figure), one can never leave the grey area. Furthermore, once one is in the interior of the grey area (and therefore away from the corners), one is \enquote{stuck} in the interior and won't reach the boundary anymore.

  	Using these facts, we will now prove that $s_j=s$ must lie in the interior of the staircase region by studying the sequence $s_k$ as described above. First, we will prove the following claim by induction:
  	
  	\clearpage
  	\textbf{Claim:} $s_k$ lies in the staircase region at $\langle R-L,0\rangle$ for $k=0,1,\dots,j$ (so for now, it could be on the boundary).
  	
  	The claim is obvious for $k=0$ since $s_0=\langle R-L,0\rangle$. Assume it is true for given $k$ and $s_k$ is lying in the staircase region. We can define a path as follows:
  	\begin{align*}
  		\gamma(\theta)=\underbrace{\sum_{i\leq k}e(C_i,\theta_i)+\sum_{i\geq k+2}e(C_i,0)}_{=:\langle x,y\rangle}+e(C_{k+1},\theta)\qquad 0\degrees\leq\theta\leq\theta_{k+1}<90\degrees
  	\end{align*}
  	Observe that the we can apply (i) of the Arc Fact to this path:
  	\begin{itemize*}
  		\item $\gamma(0\degrees)=s_k$ is in the staircase region by hypothesis.
  		\item $0\degrees\leq\theta<90\degrees$ is given.
  		\item Therefore, $s_{k+1}=\gamma(\theta_{k+1})$ is in the staircase region by the Arc Fact.
  	\end{itemize*}
	
	The claim follows for all $k=0,1,\dots,j$. In particular $s=s_j$ is lying in the staircase region. We will now argue that it is in fact in the \emph{interior} of that region.
	
%	First note that if $R-L\neq -1,0,1$, then $s$ is lying in the staircase area starting at a point $[R-L,0]$ on the integer lattice that is further left than $[-1,0]$ or further right than $[1,0]$. This staircase area never intersects the unit circle at the origin. We conclude that $s$ is not a unit vector, contradicting our requirements. We can therefore assume $R-L\in\{-1,0,1\}$. In other words, the number of left and right vectors can differ by at most one.
	
	Recall that $j\geq 4$, so besides $e_1=a$ and $e_2=c$ (that are left and right vectors respectively), there are at least two more vectors $e_3,\dots, e_j$. These $j-2$ vectors can be written as $e_i=e(C_i,\theta_i)$ for $\theta_i> 0\degrees$ ($a$ is horizontal, $c$ can be horizontal, but all other $j-2$ vectors must have positive $y$ component and therefore a positive angle).
%	\begin{itemize*}
%		\item These $j-2$ vectors can't all be left vectors because otherwise $R-L=1-(1+j-2)\neq -1,0,1$ which leads to a contradiction (see above).
%		\item For the same argument these $j-2$ vectors can't all be right vectors.
%		\item These $j-2$ vectors can be written as $e_i=e(c_i,\theta_i)$ for $\theta_i\neq 0$ ($a$ is horizontal, $c$ can be horizontal, but all other $j-2$ vectors must have positive $y$ component).
%	\end{itemize*}

	We conclude that there is at least two vectors with positive angle $\theta_i$. In that context, reconsider the Arc Fact and our sequence $s_k$ and note:
	\begin{itemize*}
		\item $s_2$ (which represents the sum after including $a$ and $c$) is in the staircase region but could be in a corner (in fact, it could be at $\langle R-L,0\rangle$ because both $a$ and $c$ could be horizontal vectors, so including their angles didn't actually change the sum).
		\item $s_3$ lies on an arc that starts at $s_2$ but reaches an angle $0<\theta_i<90\degrees$. Therefore, $s_3$ will not lie in a corner by (ii) of the Arc Fact.
		\item Applying (iii) of the Arc Fact to the arc from $s_3$ to $s_4$, note that $s_3$ does not lie in a corner. Therefore, $s_4$ lies in the interior of the staircase region.
		\item From now on, (iii) of the Arc Fact applies inductively: $s_k$ lies in the interior, i.e. not in a corner. Therefore, $s_{k+1}$ lies in the interior of the staircase region.
	\end{itemize*}
	
	It follows that $s=s_j$ is in fact lying in the \emph{interior} of the staircase region.
	
	Assume for the sake of contradiction that $\langle R-L,0\rangle\neq\langle0,0\rangle$. This means $s$ is lying in the interior of the staircase region starting at a point $\langle R-L,0\rangle\neq\langle 0,0\rangle$ on the integer lattice. At the same time, $s$ is a unit vector. However the unit circle centred at the origin does not intersect the interior of the staircase region at $\langle R-L,0\rangle \neq\langle 0,0\rangle$. This is a contradiction. It follows that $R-L=0$.
	
	We can now prove our initial claims (a') and (b')
	
	\begin{enumerate***}
		\item $R-L=0$, therefore $j=R+L=2R$ is even, proving the first claim.
		\item $R-L=0$ implies that $s$ lies on the intersection of the unit circle at the origin with the interior of the staircase region also starting at the origin. It follows that the angle between $s$ and the positive $y$-axis lies strictly within $\pm30\degrees$.
	\end{enumerate***}

	\clearpage

	We finish by proving the Arc Fact for $C=1$ (the case for $C=-1$ is just the mirror image). Let $\gamma(\theta)=\langle x,y\rangle+e(1,\theta)=\langle x,y\rangle+\langle\cos\theta,\sin\theta\rangle$ for $0\degrees\leq\theta< 90\degrees$. Note that we rotate \enquote{towards the left} as $\theta$ goes from $0\degrees$ to $90\degrees$. Also, the staircase region only grows wider as we go up. So even if $\gamma(0\degrees)$ is on the right staircase, no other point on $\gamma$ will be on or beyond the right staircase. We can therefore concentrate on the left staircase. Also, if we don't start on the left staircase but in the interior of the staircase region, this situation is just a right shift of an arc that starts on the staircase (compare the left and right of Figure \ref{fig:arcfactproof}). So it is enough to prove that an arc starting \emph{on} the staircase doesn't reach particular points.
	
	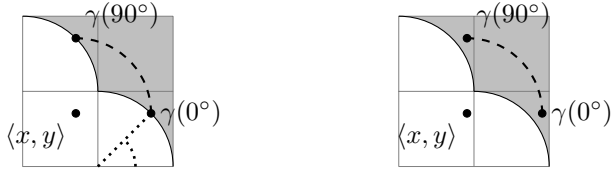
\begin{figure}
	\centering
	\begin{tikzpicture}
		\path [fill=lightgray] (1,-1) arc [radius=1,start angle = 0, end angle=90] arc [radius=1,start angle = 0, end angle=90] -- (1,1) -- cycle;
		\draw [help lines] (-1,-1) grid (1,1);
		\draw (1,-1) arc [radius=1,start angle = 0, end angle=90];
		\draw (0,0) arc [radius=1,start angle = 0, end angle=90];
		\path (0,-1) -- ++(45:1) -- ++(-1,0) node (xy) {};
		\path (xy.center) -- ++ (0:1) node (gamma0) {};
		\draw [thick,dashed] (gamma0) arc [radius=1,start angle = 0, end angle = 90] node [circle] (gamma90) {};
		\draw [fill=black] (gamma0) circle [radius=0.05];
		\draw [fill=black] (gamma90) circle [radius=0.05];
		\draw [fill=black] (xy) circle [radius=0.05];
		\node [right] at (gamma0) {$\gamma(0\degrees)$};
		\node [above right] at (gamma90) {$\gamma(90\degrees)$};
		\node [below left] at (xy) {$\langle x,y\rangle$};
		\draw [dotted,line width=1] (0.5,-1) arc [radius=0.5,start angle=0,end angle=45];
		\draw [dotted,line width=1] (0,-1) -- (gamma0.center);
	\begin{scope}[xshift=5cm]
		\path [fill=lightgray] (1,-1) arc [radius=1,start angle = 0, end angle=90] arc [radius=1,start angle = 0, end angle=90] -- (1,1) -- cycle;
		\draw [help lines] (-1,-1) grid (1,1);
		\draw (1,-1) arc [radius=1,start angle = 0, end angle=90];
		\draw (0,0) arc [radius=1,start angle = 0, end angle=90];
		\path (0,-1) -- ++(45:1) -- ++(0.2,0) -- ++(-1,0) node (xy) {};
		\path (xy.center) -- ++ (0:1) node (gamma0) {};
		\draw [thick,dashed] (gamma0) arc [radius=1,start angle = 0, end angle = 90] node [circle] (gamma90) {};
		\draw [fill=black] (gamma0) circle [radius=0.05];
		\draw [fill=black] (gamma90) circle [radius=0.05];
		\draw [fill=black] (xy) circle [radius=0.05];
		\node [right] at (gamma0) {$\gamma(0\degrees)$};
		\node [above right] at (gamma90) {$\gamma(90\degrees)$};
		\node [below left] at (xy) {$\langle x,y\rangle$};	
	\end{scope}
	\end{tikzpicture}
	\caption{Proof of the Arc Fact. On the left is the case where the arc $\gamma(\theta)$ (dashed) starts on the boundary of the staircase region, on the right is the case where the arc starts in the interior of the staircase region. Note that the latter is just a right shift of the situation where we start on the arc. The point in the centre of both pictures is chosen to be $\langle 0,0\rangle$. The dotted angle is $\varphi$.}
	\label{fig:arcfactproof}
	\end{figure}
	
	We will now prove the three parts of the Arc Fact for a path starting on the left staircase.
	
	\begin{enumerate**}
		\item Note that it is enough to consider two subsequent \enquote{steps} of the left staircase, namely the one at $\gamma(0\degrees)$ and the next higher one that $\gamma$ could possibly cross. It is obvious that $\gamma$ will not cross any steps on the left staircase that are further above or below. Shift the picture so that the corner between the two relevant steps is at $\langle 0,0\rangle$. This means we can write:
	\begin{align*}
		\gamma(0\degrees)=\langle0,-1\rangle+\langle\cos\varphi,\sin\varphi\rangle\qquad\text{for some }\varphi\in[0,90\degrees]
	\end{align*}
	And therefore
	\begin{align*}
		\gamma(\theta)&=\langle x,y\rangle+\langle\cos\theta,\sin\theta\rangle\\
		&=\gamma(0\degrees)-\langle1,0\rangle+\langle\cos\theta,\sin\theta\rangle\\
		&=\langle\cos\varphi+\cos\theta-1,\sin\varphi+\sin\theta-1\rangle
	\end{align*}
	To prove that $\gamma(\theta)$ lies on or to the right of the two steps, we need to show
	\begin{align*}
		\textrm{dist}(\langle-1,0\rangle,\gamma(\theta))\geq 1\quad \textrm{dist}(\langle0,-1\rangle,\gamma(\theta))\geq 1
	\end{align*}
	Note that
	\begin{align*}
		\textrm{dist}(\langle-1,0\rangle,\gamma(\theta))^2=(\cos\varphi+\cos\theta)^2+(\sin\varphi+\sin\theta-1)^2
	\end{align*}
	Basic two-variable calculus yields that the minimum of this function for $(\varphi,\theta)\in[0\degrees,90\degrees]\times[0\degrees,90\degrees]$ is in fact $1$. The same argument works for the second inequality.
	\item Note that, in the notation of (i), the only corners that $\gamma(\theta)$ can reach are at $\langle1,-1\rangle$, $\langle0,0\rangle$ and $\langle-1,1\rangle$. Reconsidering $\gamma(\theta)=\langle\cos\varphi+\cos\theta-1,\sin\varphi+\sin\theta-1\rangle$ and $(\varphi,\theta)\in[0\degrees,90\degrees]\times[0\degrees,90\degrees]$ and using calculus, these three points can only be reached if $(\varphi,\theta)$ is one of $(0\degrees,0\degrees)$, $(0\degrees,90\degrees)$, $(90\degrees,0\degrees)$ or $(90\degrees,90\degrees)$. Since $\theta<90\degrees$ by assumption, corners can therefore only be reached at $\theta=0\degrees$.
	\item Again we can consider $\gamma(\theta)=[\cos\varphi+\cos\theta-1,\sin\varphi+\sin\theta-1]$. If $\gamma(0\degrees)$ is not in a corner of the left staircase, we have $\varphi\neq0\degrees,90\degrees$. Applying calculus one more time, $\textrm{dist}(\langle-1,0\rangle,\gamma(\theta))^2>1$ and $\textrm{dist}(\langle0,-1\rangle,\gamma(\theta))^2>1$ for any $0\degrees<\theta<90\degrees$ and (iii) follows.
  	\end{enumerate**}
  	
  	This finishes the proof of the Arc Fact and therefore also concludes the proof of the Special Combined Angle Lemma.
\end{proof}

\vspace{-3mm}
%%%%%%%%%%%%%%%%%%%%%%%%%%%%%%%%%%%%%
\section{Global Properties on the plane}
%%%%%%%%%%%%%%%%%%%%%%%%%%%%%%%%%%%%%
\label{sect:global}

Using the local properties derived in the previous section, we now turn towards global properties of geodesic nets. For now, let $G$ be a geodesic net on the flat (zero curvature) plane. We will see later in section \ref{sect:negative} that these results readily extend to nonpositive curvature.

%%%%%%%%%%%%%%%%%%%%%%%%%%%%%%%%%%%%%
\subsection{The Convex Hull Property}
%%%%%%%%%%%%%%%%%%%%%%%%%%%%%%%%%%%%%

\begin{lemma}\label{thm:convexhull}
	Let $K$ denote the convex hull of all the unbalanced vertices in $G$. All balanced vertices lie in $K\setminus\partial K$.
\end{lemma}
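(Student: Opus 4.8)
The plan is to use a supporting hyperplane (line) argument together with the balancing condition. Let $v$ be any balanced vertex and suppose for contradiction that $v \notin K \setminus \partial K$, i.e. either $v$ lies outside $K$ or $v \in \partial K$. In either case there is a closed half-plane $H$ (with boundary line $\ell$) containing all of $K$ — hence all unbalanced vertices — such that $v \in \ell = \partial H$ and $v$ is not in the interior of $K$. Choose a unit vector $w$ normal to $\ell$ pointing out of $H$ (so $\langle u - v, w\rangle \le 0$ for every unbalanced vertex $u$).

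The key step is to analyze the geodesic (straight) edges emanating from $v$. First I would argue that, since the whole net is connected and $v$ is not isolated, every geodesic segment starting at $v$ either ends at another vertex or can be extended; following edges from vertex to vertex one eventually reaches an unbalanced vertex (the net is finite, and a net all of whose vertices are balanced would — by a standard argument — have no boundary, which is impossible for a finite embedded graph in the plane, but more simply: the maximum of $\langle \cdot, w\rangle$ over all vertices is attained, and at a vertex achieving it, if it were balanced, Lemma~\ref{thm:straightline} would force an edge going strictly in the $+w$ direction, contradicting maximality). Apply exactly this to $v$ itself together with the observation that $v$ attains the maximum value $0$ of $\langle \cdot - v, w\rangle$ is not quite available, so instead: consider the function $\phi(x) = \langle x - v, w\rangle$ on vertices. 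Since all unbalanced vertices satisfy $\phi \le 0$ and $\phi(v) = 0$, the vertex $v^*$ maximizing $\phi$ is balanced (if $\phi(v^*) > 0$ it cannot be unbalanced; if $\phi(v^*)=0$ then $v$ is itself a maximizer and we take $v^* = v$). Now at $v^*$, every edge $e$ incident to $v^*$ points to a neighbor $x$ with $\phi(x) \le \phi(v^*)$, so the unit tangent vector $\tau_e$ at $v^*$ satisfies $\langle \tau_e, w\rangle \le 0$. Summing over all edges at $v^*$ and using the balancing condition gives $0 = \langle \sum_e \tau_e, w\rangle \le 0$, with equality forcing $\langle \tau_e, w\rangle = 0$ for every incident edge, i.e. all edges at $v^*$ lie along $\ell$. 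But then by Lemma~\ref{thm:straightline} applied to the geodesic $\ell$ through the balanced vertex $v^*$, there must be an edge strictly on each side of $\ell$ — contradiction.

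This simultaneously handles the case $v \in \partial K$: if $v$ lies on $\partial K$ we may pick $\ell$ to be a supporting line of $K$ through $v$, run the same argument, and again reach a contradiction, so no balanced vertex lies on $\partial K$ either. Hence every balanced vertex lies in $K \setminus \partial K$.

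I expect the main subtlety to be the careful handling of the extremal-vertex argument: one must be sure that a vertex maximizing a linear functional, if balanced, is genuinely forced by Lemma~\ref{thm:straightline} to have edges on both sides of the level line, which requires noting that an edge lying entirely within $\ell$ still has its far endpoint with the same $\phi$-value (fine) while \emph{some} edge must leave $\ell$ on the far side (this is precisely Lemma~\ref{thm:straightline}). A secondary point is ensuring the net actually has unbalanced vertices and that $K$ is genuinely lower-dimensional or full: if all three unbalanced vertices are collinear, $K$ is a segment and $K \setminus \partial K$ is its relative interior, and the same linear-functional argument (now with $\ell$ the line through the segment, or any line meeting it) still applies; if $|U| \le 1$ then $K$ is a point or empty and the statement forces $B = \emptyset$, consistent with $f(0)=f(1)=0$.
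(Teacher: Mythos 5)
Your proof is correct, and it is essentially the paper's argument repackaged as a maximum principle rather than an infinite descent. The paper starts from a balanced vertex on or outside $\partial K$, applies Lemma \ref{thm:straightline} to get an edge strictly on the unbalanced-free side, lands on a vertex that must again be balanced, and iterates, contradicting finiteness of the net; you instead take the vertex $v^*$ extremizing the linear functional $\phi$, note it must be balanced, and apply the balancing condition at $v^*$ directly, invoking Lemma \ref{thm:straightline} only to exclude the degenerate case that all edges at $v^*$ lie on the level line. Both proofs rest on the same two ingredients (Lemma \ref{thm:straightline} plus finiteness/extremality), so the difference is one of packaging; your version avoids the iteration and gives the contradiction at a single vertex, at the cost of being tied to linear functionals and straight edges, i.e.\ to the flat plane. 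That matches the paper's setup, since Section \ref{sect:global} is stated for the flat plane and the nonpositively curved case is handled later by replacing straight lines with geodesics; your functional $\phi$ would likewise need a replacement (e.g.\ a distance or Busemann-type function) in that setting. One small slip: when $\phi(v^*)>0$ the vertex $v^*$ does not lie on $\ell$, so Lemma \ref{thm:straightline} should be applied to the level line of $\phi$ through $v^*$ (the geodesic through $v^*$ parallel to $\ell$), not to $\ell$ itself; with that trivial correction the argument is complete, including the degenerate cases you note at the end.
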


\begin{proof}
	Assume there is a balanced vertex $v$ lying on or outside $\partial K$. This implies that we can draw a straight line through $v$ such that one side of that line is free of unbalanced vertices. Assume the line is vertical and all unbalanced vertices lie to the right of it. According to Lemma \ref{thm:straightline}, there must be an edge to the left of the line, leading to a vertex to the left of the line. It can't be unbalanced. Therefore, we can again draw a vertical line through that new vertex and get another vertex to the left of it. This process would continue ad infinitum, contradicting the finiteness of the geodesic net.
\end{proof}

\begin{lemma}\label{thm:atleastthree}
	A geodesic net that has at least one balanced vertex must have at least three unbalanced vertices.
\end{lemma}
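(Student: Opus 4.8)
The plan is to obtain this as an immediate corollary of the Convex Hull Property (Lemma~\ref{thm:convexhull}). I would argue by contraposition: assume $G$ is a geodesic net with at least one balanced vertex $v$, and suppose for contradiction that it has at most two unbalanced vertices.

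First I would examine the convex hull $K$ of the unbalanced vertices in the remaining cases. If $|U|=0$ then $K=\emptyset$; if $|U|=1$ then $K$ is a single point; if $|U|=2$ then $K$ is a (possibly degenerate) line segment. In all three cases $K$ is contained in an affine line of $\mathbb{R}^2$ (or is empty), so it has empty topological interior in $\mathbb{R}^2$ and satisfies $\partial K=K$; in particular $K\setminus\partial K=\emptyset$. But Lemma~\ref{thm:convexhull} places every balanced vertex, and in particular $v$, in $K\setminus\partial K$. This is a contradiction, so $|U|\ge 3$.

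I do not expect a genuine obstacle here; the substance is entirely contained in Lemma~\ref{thm:convexhull}, which is already available. The only care needed is in enumerating the degenerate cases $|U|\in\{0,1,2\}$ and invoking the elementary fact that a line segment (or a point, or the empty set) is nowhere dense in the plane and coincides with its own boundary. As a side remark one could note that the same reasoning in fact forces the three unbalanced vertices to be non-collinear, since otherwise $K$ would again be a segment with empty interior; the statement as phrased, however, only asserts the weaker conclusion $|U|\ge 3$.
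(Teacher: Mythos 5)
Your proof is correct and follows the paper's own argument exactly: the paper likewise observes that with fewer than three unbalanced vertices the convex hull has empty interior and then invokes Lemma~\ref{thm:convexhull}. Your version merely spells out the degenerate cases $|U|\in\{0,1,2\}$ in more detail.
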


\begin{proof}
	Otherwise the convex hull of the unbalanced vertices has empty interior. Apply Lemma \ref{thm:convexhull}.
\end{proof}

Note that the last Lemma rephrases the trivial cases of Theorem \ref{thm:main} for $n=0,1,2$.

\subsection{Paths and the Turn Angle}

From now on, we will frequently consider oriented paths using the following conventions:

\begin{itemize*}
	\item All paths that we consider are oriented, piecewise geodesic paths.
	\item A point on such a path that lies between two of its geodesic segments is called a \emph{vertex}.
	\item We will often refer to the geodesic segments of a path as \emph{edges}.
	\item For a path $\gamma$ that goes through a vertex $x$, we write $\gamma(\rightarrow x)$ for the restriction of $\gamma$ up to the point until it reaches $x$ for the first time and $\gamma(x\rightarrow)$ for the restriction of $\gamma$ starting at the point where it reached $x$ for the last time.
	\item For two paths or edges, we use $*$ as the symbol for concatenation.
	\item The notation $-\gamma$ refers to $\gamma$ with the opposite orientation.
	\item Given a closed path $\gamma$, we call the union of bounded components of $\mathbb{R}^2\setminus\gamma$ the \emph{inside} of $\gamma$ and the unbounded component the \emph{outside}.
\end{itemize*}

\begin{definition}
	Consider two consecutive edges $e,f$ on a path. The \emph{turn angle from $e$ to $f$ along the path} is defined as follows: If $e\neq -f$, the turn angle is the angle between the extension of $e$ to the other side of the vertex and the edge $f$ (By convention, a left turn is measured in positive angles and a right turn is measured in negative angles, see figure \ref{fig:turnangle}). If $e=-f$, the turn angle is $+180\degrees$ (so if we backtrack, this is considered a left turn).
\end{definition}

\begin{figure}[t]
\centering
\begin{tikzpicture}[scale=2]
	\draw [decoration={markings,mark=at position 1/2 with {\arrow{>[scale=2.5,
          length=2,
          width=3]}}},postaction={decorate}] (-1,0) -- node () [above=0.1] {$e$} (0,0);
	\draw [decoration={markings,mark=at position 3/4 with {\arrow{>[scale=2.5,
          length=2,
          width=3]}}},postaction={decorate}] (0,0) -- node () [above left=0.1] {$f$} ++(40:1);
	\draw [fill=black] (0,0) circle [radius=0.05];
	\draw [dashed] (0,0) -- (1,0);
	\draw (0.5,0) arc [radius = 0.5,start angle = 0, end angle = 40];
	
	\draw [decoration={markings,mark=at position 1/2 with {\arrow{>[scale=2.5,
          length=2,
          width=3]}}},postaction={decorate}] (2,0.65) -- node () [below=0.1] {$e$} (3,0.65);
	\draw [decoration={markings,mark=at position 3/4 with {\arrow{>[scale=2.5,
          length=2,
          width=3]}}},postaction={decorate}] (3,0.65) -- node () [below left =0.1] {$f$} ++(-40:1);
	\draw [fill=black] (3,0.65) circle [radius=0.05];
	\draw [dashed] (3,0.65) -- (4,0.65);
	\draw (3.5,0.65) arc [radius = 0.5,start angle = 0, end angle = -40];
\end{tikzpicture}
\caption{A positive turn angle (left) and a negative turn angle (right). If $e=f$, the turn angle is $+180\degrees$ by convention.}
\label{fig:turnangle}
\end{figure}
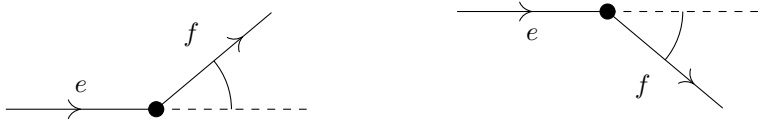

For further clarification, note that if $e$ and $f$ lie on a path that circumscribes a polygon in counterclockwise direction (this means that no backtracking is happening), the turn angle is exactly what is known as the exterior angle at the vertex of a polygon.

\begin{definition}
	Consider a path starting on an edge $e$ and ending on an edge $f$. We define the \emph{turn angle from $e$ to $f$ along the path} as the sum of all turn angles at the vertices between $e$ and $f$.
\end{definition}

Recall that all paths that we consider are piecewise geodesic. So the following well-known version of Gau\ss-Bonnet applies:
\begin{lemma}[Gau\ss-Bonnet, simple closed paths in the flat plane]\label{thm:gaussbonnetoriginal}
	If $e$ is an edge on a simple closed counterclockwise path, the turn angle from $e$ to $e$ along $\gamma$ is $360\degrees$.
\end{lemma}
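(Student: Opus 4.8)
The plan is to recognize $\gamma$ as the boundary of a simple polygon and to reduce the claim to the classical fact that the interior angles of a simple $n$-gon sum to $(n-2)\cdot 180\degrees$. The only genuinely geometric inputs will be that geodesics in the flat plane are straight (so the geodesic segments of $\gamma$ contribute no turning) and that a simple closed planar curve has rotation index $+1$.

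First I would fix notation. Since $\gamma$ is a simple closed piecewise-geodesic path, it is injective on $S^1$, so its vertices $v_1,\dots,v_n$ (in counterclockwise cyclic order, with $e$ one of the edges) are pairwise distinct, and together with the geodesic segments between them they bound the inside $D$ of $\gamma$, which is a single bounded region by the Jordan curve theorem. Let $\beta_i\in(0\degrees,360\degrees)$ denote the interior angle of $D$ at $v_i$, i.e.\ the opening of the wedge of $D$ at $v_i$ between the edge entering $v_i$ and the edge leaving it. Simplicity forces $\beta_i\neq0\degrees$ and $\beta_i\neq360\degrees$, since either would make the incoming and outgoing edge overlap and $\gamma$ backtrack. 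I would then check the elementary sign bookkeeping: since $D$ lies to the left of the counterclockwise path, the turn angle $\tau_i$ at $v_i$ (left turns positive, right turns negative, as in the definition) satisfies $\tau_i=180\degrees-\beta_i$; this is immediate when the bend is to the left ($\beta_i<180\degrees$), and the same computation with the opposite sign covers reflex vertices ($\beta_i>180\degrees$). The degenerate value $\tau_i=\pm180\degrees$ of the definition never occurs here, precisely because it would mean $\gamma$ backtracks along an edge and hence is not simple.

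Next I would invoke the triangulation of a simple polygon: $D$ can be cut by $n-3$ non-crossing diagonals into $n-2$ triangles (e.g.\ by repeatedly removing an \enquote{ear}, or by induction using the fact that any simple $n$-gon with $n\geq4$ has a diagonal splitting it into two simple polygons with fewer vertices). Each triangle has angle sum $180\degrees$, and the triangle-angles located at a given $v_i$ add up to $\beta_i$, so $\sum_{i=1}^n\beta_i=(n-2)\cdot180\degrees$. The turn angle from $e$ to $e$ along $\gamma$ is, by definition, the sum of the turn angles at all $n$ vertices, each counted once, hence
\begin{align*}
  \sum_{i=1}^n \tau_i \;=\; \sum_{i=1}^n\bigl(180\degrees-\beta_i\bigr) \;=\; n\cdot180\degrees-(n-2)\cdot180\degrees \;=\; 360\degrees,
\end{align*}
which is the assertion.

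The main obstacle is precisely the triangulation (equivalently, the interior-angle sum for non-convex simple polygons): it is classical but not a one-liner, and one must be a little careful not to assume convexity. A reader who prefers heavier machinery can bypass it by smoothing each corner $v_i$ with a short circular arc of total geodesic curvature exactly $\tau_i$, obtaining a smooth simple closed curve whose total curvature is $\sum_i\tau_i$ (the geodesic segments contribute nothing), and then applying Hopf's Umlaufsatz, which gives total curvature $360\degrees$ for a positively oriented smooth simple closed planar curve. Either route isolates the same single fact — rotation index $+1$ for simple closed curves — from which the statement follows.
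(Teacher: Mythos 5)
Your proof is correct. Note, however, that the paper does not prove this lemma at all: it simply invokes it as the well-known exterior-angle theorem for piecewise geodesic simple closed curves in the flat plane (equivalently, rotation index $+1$, i.e. Hopf's Umlaufsatz), which is exactly the single fact your second, smoothing-based route isolates. Your main route is a legitimate elementary justification of it: the sign bookkeeping $\tau_i=180\degrees-\beta_i$ (with the inside to the left of the counterclockwise path), the observation that simplicity excludes $\beta_i=0\degrees$ and $\beta_i=360\degrees$ so the $\pm180\degrees$ backtracking convention never enters, and the reduction to the interior-angle sum $(n-2)\cdot180\degrees$ of a simple (possibly non-convex) polygon via triangulation. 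The only heavy ingredient is that triangulation/angle-sum theorem, which you correctly flag as classical rather than a one-liner. A minor point worth keeping in mind: vertices with $\beta_i=180\degrees$ (consecutive collinear edges) are allowed for a piecewise geodesic path and contribute $\tau_i=0\degrees$, which your formula handles automatically, so no extra case is needed.
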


As it turns out, we will need to use this fact in a context where $\gamma$ is not simple. We will carefully allow some exceptions to the requirement of simplicity, ensuring that Gau\ss-Bonnet still applies. To do so, we will define what it means for a path to be \emph{essentially simple}, using the notions of \emph{admissible backtracks} and \emph{non-transversal crossings}.

\begin{definition}[Admissible Backtrack]\label{def:admissiblebacktrack}
	Consider a path $\gamma$ doing a backtrack along an edge as follows:
	\begin{align*}
		e*a*(-a)*f
	\end{align*}
	Then this backtrack is \emph{admissible} if $f\neq -e$ and $a$ lies to the right of the path $e*f$. Otherwise it is \emph{inadmissible}. See figure \ref{fig:backtrack}.
\end{definition}

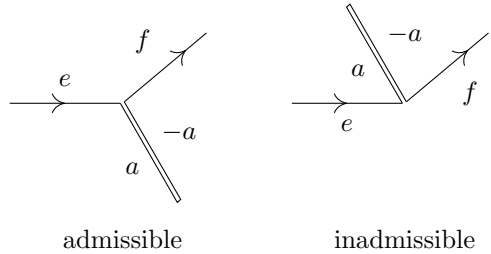
\begin{figure}
	\centering
	\begin{tikzpicture}[scale=1.5]
	\draw [decoration={markings,mark=at position 1/2 with {\arrow{>[scale=2.5,
          length=2,
          width=3]}}},postaction={decorate}] (-1,0) -- node () [above=0.1] {$e$} (180:0.02);
    \draw (180:0.02) -- node [below left] (a) {$a$} ($(300:1)+(220:0.02)$) -- ($(300:1)-(220:0.02)$) -- node [above right] (ma) {$-a$} (40:0.02);
	\draw [decoration={markings,mark=at position 3/4 with {\arrow{>[scale=2.5,
          length=2,
          width=3]}}},postaction={decorate}] (40:0.02) -- node () [above left=0.1] {$f$} ++(40:0.95);
	\node at (270:1.2) {admissible};
    \begin{scope}[xshift=2.5cm]
    	\draw [decoration={markings,mark=at position 1/2 with {\arrow{>[scale=2.5,
          length=2,
          width=3]}}},postaction={decorate}] (-1,0) -- node () [below=0.1] {$e$} (180:0.02);
    \draw (180:0.02) -- node [below left] (a) {$a$} ($(120:1)+(220:0.02)$) -- ($(120:1)-(220:0.02)$) -- node [above right] (ma) {$-a$} (40:0.02);
	\draw [decoration={markings,mark=at position 3/4 with {\arrow{>[scale=2.5,
          length=2,
          width=3]}}},postaction={decorate}] (40:0.02) -- node () [below right=0.1] {$f$} ++(40:0.95);	
    \node at (270:1.2) {inadmissible};
    \end{scope}
	\end{tikzpicture}	
	\caption{Two examples of backtracks}
	\label{fig:backtrack}
\end{figure}

When using the results of this section in the proofs of lemmas \ref{thm:60degrees}, \ref{thm:threevertices}, \ref{thm:60degreesnegative} and \ref{thm:threeverticesnonpos}, we will see that the only backtracks that are happening are admissible backtracks. That means neither will we have backtracks that lie to the left of the path, nor will we have \enquote{double backtracks} of the form $e*a*b*(-b)*(-a)*f$.

Based on this definition, the following lemma is apparent from figure \ref{fig:backtrack} and the fact that a backtrack is considered a turn of $+180\degrees$.
\begin{lemma}\label{thm:backtrackdontmatter}
	Consider an admissible backtrack $e*a*(-a)*f$. Then the turn angle along $e*f$ is the same as the turn angle along $e*a*(-a)*f$.
\end{lemma}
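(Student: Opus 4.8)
The plan is to reduce the statement to a finite angle computation at the vertex $V$ where $e$ meets $a$ (and where $-a$ meets $f$) together with the tip $T$ of the backtrack. Away from these points the two paths $e*f$ and $e*a*(-a)*f$ coincide, and the turn angle along a path is additive over its vertices, so it suffices to show that the three turns of $e*a*(-a)*f$ in this stretch, namely $e\to a$ (at $V$), $a\to -a$ (at $T$) and $-a\to f$ (at $V$), contribute in total exactly the single turn angle $e\to f$ at $V$.

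To carry this out I would fix coordinates in the tangent plane at $V$ so that the forward extension of $e$ at $V$ points in direction $0\degrees$, and write $\theta_a,\theta_f$ for the directions at $V$ of the edge $a$ and the edge $f$. Admissibility gives $f\neq -e$, hence $\theta_f\not\equiv 180\degrees$, so we may take the representative $\theta_f\in(-180\degrees,180\degrees)$; then by definition the turn angle along $e*f$ is exactly $\theta_f$. The genuine content of the hypothesis is the clause ``$a$ lies to the right of $e*f$'', and the key step is to translate it into the assertion that the representative of $\theta_a$ can be chosen in the open interval $(180\degrees,\,360\degrees+\theta_f)$. This comes from unwinding the orientation convention: near $V$ the left side of $e*f$ is the sector swept counterclockwise from the outgoing ray of $f$ (direction $\theta_f$) to the arm lying along $e$ (direction $180\degrees$), so the right side consists of the directions $\theta_a$ with $\theta_a-180\degrees\in(0\degrees,\theta_f+180\degrees)$; since $\theta_f+180\degrees\in(0\degrees,360\degrees)$ this sector is non-degenerate, and its two excluded endpoints are precisely the cases where $a$ runs along $e$, resp. along $f$ — the cases where the identity really would fail.

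With this normalization no $360\degrees$ ambiguity remains. Since $\theta_a\in(180\degrees,540\degrees)$, the turn angle $e\to a$ is $\tau_1=\theta_a-360\degrees\in(-180\degrees,180\degrees)$; the reversed edge $-a$ points in direction $\theta_a+180\degrees$, and the unique representative of $\theta_f-(\theta_a+180\degrees)$ lying in $(-180\degrees,180\degrees]$ is $\theta_f-\theta_a+180\degrees\in(-180\degrees,\theta_f)$, so the turn angle $-a\to f$ is $\tau_3=\theta_f-\theta_a+180\degrees$; and the turn at $T$ is $+180\degrees$ by the backtracking convention. Adding these,
\[
\tau_1+180\degrees+\tau_3=(\theta_a-360\degrees)+180\degrees+(\theta_f-\theta_a+180\degrees)=\theta_f,
\]
which is the turn angle along $e*f$, finishing the proof.

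The one delicate point — and the step I expect to need the most care — is the translation in the middle paragraph: one must pin down ``to the right'' with the correct orientation (worth sanity-checking against the admissible picture in Figure \ref{fig:backtrack} and against the straight path $\theta_f=0\degrees$), and then check that under the resulting constraint each of $\tau_1,\tau_3$ genuinely lands in its principal range $(-180\degrees,180\degrees]$. Running the identical computation on the inadmissible picture of Figure \ref{fig:backtrack} instead yields $\tau_1+180\degrees+\tau_3=\theta_f+360\degrees$, which is exactly the surplus full turn that the two admissibility hypotheses ($f\neq-e$ and ``$a$ to the right'') are there to rule out.
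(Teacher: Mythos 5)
Your computation is correct, and it is essentially the paper's argument made explicit: the paper simply declares the lemma ``apparent'' from Figure \ref{fig:backtrack} together with the $+180\degrees$ backtrack convention, and your angle bookkeeping at $V$ (showing the two turns $e\to a$ and $-a\to f$ plus the $+180\degrees$ at the tip sum to the single turn $e\to f$, with admissibility exactly ruling out the spurious extra $360\degrees$) is precisely the verification that figure encodes. So this is the same approach, just carried out rigorously, and correctly so.
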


We will now specify what kind of crossing of paths we allow.
\begin{definition}[Non-Transversal Crossing]\label{def:nontransversal}
	 Consider a non-closed, simple path $e_1*\cdots * e_n$ ($n\geq 0$) and two paths $\alpha=a*e_1*\cdots*e_n*b$ and $\gamma=c*(-e_n)*\cdots*(-e_1)*d$ with $a\neq -d$ and $b\neq -c$ (if $n=0$, this means that $\alpha=a*b$ and $\gamma=c*d$ go through a common vertex). It follows that $a$, $b$, $c$, $d$ are arranged around the path $e_1*\cdots*e_n$ (or their common vertex in the case $n=0$). We say that $\alpha$ and $\gamma$ cross non-transversally if the edges are arranged counterclockwise in the order $abcd$ or $dcba$. If $n=0$ and either or both of the paths are backtracks (i.e. $a=-b$ or $c=-d$), it is still considered a non-transversal crossing.
\end{definition}

%\begin{definition}[Non-Transversal Crossing]\label{def:nontransversal}
%	\begin{itemize}
%		\item Consider a vertex $x$, two distinct edges $a$, $c$ entering $x$ and two distinct edges $b$, $d$ leaving $x$. We say that the paths $a*b$ and $c*d$ \emph{cross transversally} if the edges are arranged around $x$ in the counterclockwise order $abcd$. If either or both of the paths are backtracks (i.e. $a=-b$ or $c=-d$), it is still considered a non-transversal crossing.
%		\item Let $n\geq 1$ and consider two paths $\alpha=a*e_1*\cdots*e_n*b$ and $\gamma=c*(-e_n)*\cdots*(-e_1)*d$ with $a\neq -d$ and $b\neq -c$. It follows that $a$, $b$, $c$, $d$ are arranged around the path $e_1*\cdot*e_n$. We say that $\alpha$ and $\gamma$ cross non-transversally if the edges are arranged around $x$ in one of the following counterclockwise orders: $abcd$, $abdc$, $bacd$, $badc$.
%	\end{itemize}
%\end{definition}

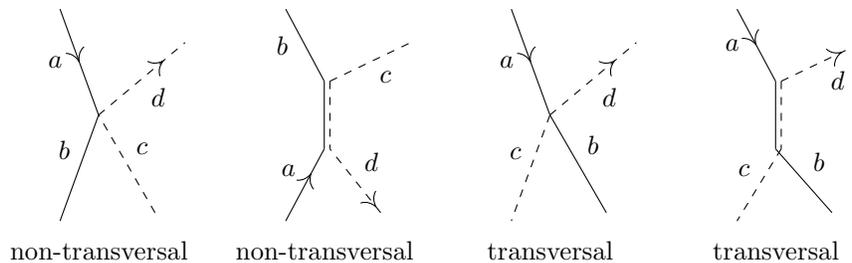
\begin{figure}[b]
	\centering
	\begin{tikzpicture}[scale=1.5]
	\draw [decoration={markings,mark=at position 1/2 with {\arrow{>[scale=2.5,
          length=2,
          width=3]}}},postaction={decorate}] (110:1) -- node () [left=0.1] {$a$} (180:0);
    \draw (180:0) -- node [above left] (a) {$b$} (250:1);
    \draw [dashed] ($(300:1)$) -- node [above right] (ma) {$c$} (40:0);
	\draw [dashed,decoration={markings,mark=at position 3/4 with {\arrow{>[scale=2.5,
          length=2,
          width=3]}}},postaction={decorate}] (40:0) -- node () [below right] {$d$} ++(40:1);	
    \node at (270:1.2) {non-transversal};
    \begin{scope}[xshift=2cm]
    \draw (110:1) -- node () [left=0.1] {$b$} (90:0.3);
    \draw (90:0.3) -- (270:0.3);
    \draw[decoration={markings,mark=at position 1/2 with {\arrow{<[scale=2.5,
          length=2,
          width=3]}}},postaction={decorate}] (270:0.3) -- node [above left] (a) {$a$} (250:1);
    \draw [dashed,decoration={markings,mark=at position 1/4 with {\arrow{<[scale=2.5,
          length=2,
          width=3]}}},postaction={decorate}] (300:1) -- node [above right] (ma) {$d$} ($(270:0.3)+(0:0.05)$);
    \draw [dashed] ($(90:0.3)+(0:0.05)$) -- ($(270:0.3)+(0:0.05)$);
	\draw [dashed] ($(90:0.3)+(0:0.05)$) -- node () [below right] {$c$} (40:1);	
    \node at (270:1.2) {non-transversal};	
    \end{scope}
    \begin{scope}[xshift=4cm]
    \draw [decoration={markings,mark=at position 1/2 with {\arrow{>[scale=2.5,
          length=2,
          width=3]}}},postaction={decorate}] (110:1) -- node () [left=0.1] {$a$} (180:0);
    \draw [dashed] (180:0) -- node [above left] (a) {$c$} (250:1);
    \draw (300:1) -- node [above right] (ma) {$b$} (40:0);
	\draw [dashed,decoration={markings,mark=at position 3/4 with {\arrow{>[scale=2.5,
          length=2,
          width=3]}}},postaction={decorate}] (40:0) -- node () [below right] {$d$} (40:1);
    \node at (270:1.2) {transversal};
    \end{scope}
    \begin{scope}[xshift=6cm]
    \draw [decoration={markings,mark=at position 1/2 with {\arrow{>[scale=2.5,
          length=2,
          width=3]}}},postaction={decorate}] (110:1) -- node () [left=0.1] {$a$} (90:0.3);
    \draw (90:0.3) -- (270:0.3);
    \draw [dashed] ($(270:0.3)+(0:0.05)$) -- node [above left] (a) {$c$} (250:1);
    \draw (300:1) -- node [above right] (ma) {$b$} (270:0.3);
    \draw [dashed] ($(90:0.3)+(0:0.05)$) -- ($(270:0.3)+(0:0.05)$);
	\draw [dashed,decoration={markings,mark=at position 3/4 with {\arrow{>[scale=2.5,
          length=2,
          width=3]}}},postaction={decorate}] ($(90:0.3)+(0:0.05)$) -- node () [below right] {$d$} (40:1);	
    \node at (270:1.2) {transversal};	
    \end{scope}
	\end{tikzpicture}	
	\caption{Examples of crossings. We will only allow non-transversal crossings. Note that our definition of non-transversal is quite strict. For example, the counterclockwise order $abdc$ (not depicted) is not called non-transversal according to our definition (it will, however, never occur below).}
	\label{fig:transcross}
\end{figure}

We can now define what it means for a path to be essentially simple:

\begin{definition}[essentially simple path]\label{def:essentiallysimple}
	We say that a path is \emph{essentially simple} if it is simple apart from the following two exceptions:
	\begin{itemize}
		\item It may contain admissible backtracks as defined above.
		\item It can revisit edges or vertices as long as this is a non-transversal crossing as defined above.
	\end{itemize}
\end{definition}

\begin{definition}\label{def:counterclockwise}
	If an essentially simple closed path $\gamma$ has the property that the outside (the unbounded component of $\mathbb{R}^2\setminus\gamma$) always lies to the right of $\gamma$, we call it a \emph{counterclockwise} path.
\end{definition}

Note that due to the presence of admissible backtracks and non-transversal intersections, there might be edges of an essentially simple path along which the outside lies simultaneously to the right \emph{and} the left of the path. The above definition allows for this to happen. If, on the other had, the outside were lying \emph{only} to the left (or on neither side) of at least one edge, the path would not be considered counterclockwise.

Based on this, we can rewrite Gau\ss-Bonnet from above:
\begin{lemma}[Gau\ss-Bonnet, essentially simple closed paths in the flat plane]\label{thm:gaussbonnetessential}
	If $e$ is an edge on an essentially simple closed counterclockwise path, the turn angle from $e$ to $e$ along $\gamma$ is $360\degrees$.
\end{lemma}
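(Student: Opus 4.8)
The plan is to reduce the essentially simple case to the genuinely simple case (Lemma \ref{thm:gaussbonnetoriginal}) by surgery, removing the two permitted defects — admissible backtracks and non-transversal crossings — one at a time without changing the total turn angle. First I would handle the backtracks. By Lemma \ref{thm:backtrackdontmatter}, replacing an admissible backtrack $e*a*(-a)*f$ by $e*f$ leaves the turn angle from $e$ to $e$ along $\gamma$ unchanged. One must check that this replacement does not destroy the counterclockwise property nor introduce a new defect of the forbidden type, and that after finitely many such replacements (the path is finite) we have a closed path with no backtracks at all. Since an admissible backtrack has the spur $a$ lying to the right of $e*f$ and $f\neq -e$, excising it only removes a "hair" sticking into the outside region, so the outside still lies (weakly) to the right everywhere it did before; this is the content I would spell out.

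Next I would remove the non-transversal crossings. Suppose two sub-paths $\alpha=a*e_1*\cdots*e_n*b$ and $\gamma'=c*(-e_n)*\cdots*(-e_1)*d$ of the (now backtrack-free) closed path meet non-transversally, so that $a,b,c,d$ occur in counterclockwise cyclic order $abcd$ around the shared arc $e_1*\cdots*e_n$. Because the closed path is a single cyclic sequence of edges, this shared arc is traversed twice in opposite directions, so the path decomposes as a concatenation of two closed loops joined along $e_1*\cdots*e_n$; I would cut at the crossing and reconnect "the other way," i.e. replace $\cdots*a*e_1*\cdots*e_n*b*\cdots*c*(-e_n)*\cdots*(-e_1)*d*\cdots$ by the two sub-loops $\cdots*a*d*\cdots$ and $\cdots*b*\cdots*c*\cdots$ obtained by smoothing the crossing. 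The key points to verify are: (i) the order $abcd$ (rather than the forbidden $abdc$) is exactly what makes this smoothing produce two *separate* closed loops each of which is again counterclockwise, with the outside still on the right; (ii) the turn angle is additive — the turn that was being counted at the crossing vertex along the old path equals the sum of the turns at the two new corners $a\to d$ and $b\to c$ (this is a small planar-angle bookkeeping, using that a right turn is negative and that $abcd$ is counterclockwise); and (iii) each new loop has strictly fewer crossings, so the process terminates. Applying Lemma \ref{thm:gaussbonnetoriginal} to each resulting genuinely simple counterclockwise loop gives turn angle $360\degrees$ on each, and I would then argue that all loops but one are "empty" in the sense that they contribute $360\degrees$ in a way that cancels when we reassemble — more precisely, I would organize the surgery so that at each step the total turn angle summed over all current loops is preserved, starting from the single value we want ($360\degrees$) and ending at a sum of $360\degrees$'s; the correct bookkeeping is that one splitting turns one loop into two and the turn angle splits as $360\degrees = 360\degrees + 0\degrees$ is *not* what happens, so instead I would run the argument in reverse: show any essentially simple counterclockwise closed path can be *built up* from a simple one by inserting admissible backtracks (turn-neutral by Lemma \ref{thm:backtrackdontmatter}) and by gluing at non-transversal crossings in a turn-neutral way.

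The cleanest route, and the one I would actually write, avoids iterating loops: perturb $\gamma$ slightly to a genuinely simple closed curve $\tilde\gamma$. At each admissible backtrack, round off the spur; at each non-transversal crossing, push one strand slightly to the side dictated by the order $abcd$ so the strands separate rather than cross. The definitions of "admissible" and "non-transversal" are rigged precisely so that this perturbation can be done (a) keeping $\tilde\gamma$ piecewise geodesic up to an arbitrarily small error, (b) keeping it counterclockwise, and (c) changing each relevant turn angle by an arbitrarily small amount, hence — since turn angles at a fixed finite vertex set vary continuously and the total for a simple closed curve is the constant $360\degrees$ by Lemma \ref{thm:gaussbonnetoriginal} — not changing the total turn angle at all (the "error" is a sum of quantities each $<\varepsilon$ that must equal a difference of two multiples of $360\degrees$, hence is $0$). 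The turn angle of $\gamma$ itself is then read off as a limit. The main obstacle I anticipate is (b)–(c) at the non-transversal crossings: one must check that the order $abcd$, as opposed to $abdc$ or a transversal order, is exactly the combinatorial condition guaranteeing that the local smoothing can be chosen so the perturbed curve stays on the correct (counterclockwise) side and so the sum of the two new turn angles limits to the one old turn angle at the crossing vertex. This is a finite planar-geometry case check, but it is the crux and is where the precise, deliberately strict definition of "non-transversal" earns its keep.
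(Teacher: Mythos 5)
Your final route---stripping the admissible backtracks via Lemma \ref{thm:backtrackdontmatter} and then perturbing each non-transversal crossing so that the path is realized as a limit of genuinely simple closed counterclockwise paths, to which Lemma \ref{thm:gaussbonnetoriginal} applies and whose total turn angle passes to the limit by continuity---is exactly the paper's proof. The one point the paper makes that you should add (and the abandoned loop-surgery detour you should simply delete) is that the counterclockwise hypothesis already rules out the $dcba$ arrangement permitted by Definition \ref{def:nontransversal}, since there the outside would lie only to the left of an edge; hence only the $abcd$ arrangement needs the smoothing argument.
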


\begin{proof}
	If the path contains an admissible backtrack, due to lemma \ref{thm:backtrackdontmatter}, we can simply remove each such backtrack (even if it contains $e$) without changing the total turn angle.
		
	If the path has a non-transversal crossing, consider Figure \ref{fig:transcross} showing examples of the only two allowed arrangements of edges: The arrangement $dcba$ can't happen here since the path is counterclockwise (so the outside can't lie exclusively to the left at any edge). On the other hand, the arrangement $abcd$ (around a common vertex or common edges) can be realized as the local limit of a sequence of simple paths for which the outside still lies to the right of the path.
	
	We arrive at a sequence of simple closed counterclockwise paths $\gamma_i\to\gamma$. Gau\ss-Bonnet as in Lemma \ref{thm:gaussbonnetoriginal} applies to each $\gamma_i$ and therefore by continuity also to the limit $\gamma$.
\end{proof}

This lemma in turn allows us to prove the following:
\begin{lemma}[Conditional Path Independence]\label{thm:condpathind}
	Consider two paths $\gamma$ and $\delta$ with the same initial vertex $u$ and the same terminal vertex $v$ (i.e. $\gamma*(-\delta)$ is a closed path) as well as an edge $e$ incident to $u$ and an edge $f$ incident to $v$. If the following conditions are met, then the turn angle from $e$ to $f$ will be the same along $e*\gamma*f$ and $e*\delta*f$:
	\begin{enumerate*}
		\item Both $e$ and $f$ lie outside $\gamma*(-\delta)$ and they have no endpoints in common.
		\item Both $\gamma$ and $\delta$ are simple, except for admissible backtracks.
		\item If $\gamma$ and $-\delta$ meet anywhere except at their endpoints, it is a non-transversal crossing.
	\end{enumerate*}
	We call this the \emph{conditional path-independence} of the turn angle (see figure \ref{fig:condpathind}).
\end{lemma}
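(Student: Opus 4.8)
The plan is to reduce this statement to Gau\ss-Bonnet for essentially simple closed paths (Lemma \ref{thm:gaussbonnetessential}) applied to the closed loop $\gamma * (-\delta)$. First I would observe that, by hypotheses (b) and (c), the closed path $\sigma := \gamma * (-\delta)$ is essentially simple: its only failures of simplicity are admissible backtracks (inside $\gamma$ or $\delta$ individually) and non-transversal crossings (where $\gamma$ and $-\delta$ meet). One must check that a backtrack inside $-\delta$ is still admissible when traversed as part of $\sigma$; this is where the orientation conventions in Definition \ref{def:admissiblebacktrack} need to be matched up carefully, and I would handle the two meeting points $u$ and $v$ separately since there $\gamma$ and $\delta$ share an endpoint but hypothesis (a) guarantees $e,f$ do not, so the local picture at $u$ and $v$ is a genuine (possibly non-transversal) crossing of the combined path with itself only through the edges $e$ and $f$ sticking out.

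Next I would fix orientation: either $\sigma$ is counterclockwise or $-\sigma = \delta * (-\gamma)$ is; by symmetry of the statement in $\gamma$ and $\delta$ I may assume $\sigma$ is counterclockwise. Now consider the closed path
\[
  \Gamma := e * \gamma * f * (-f) * (-\delta) * (-e),
\]
wait --- more cleanly, I would argue directly on turn angles. Write $T(\text{path})$ for the total turn angle along a path. Traversing the closed loop $e * \gamma * f * (\text{reverse of } e*\delta*f)$ and using that reversing a path negates each turn angle, the identity I want, namely $T(e*\gamma*f) = T(e*\delta*f)$, is equivalent to $T$ of the closed loop $\sigma$ being a full turn, up to bookkeeping for the turn contributions at $u$ and $v$ coming from the edges $e$ and $f$. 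Concretely: the turn angle from $e$ to $f$ along $e*\gamma*f$ equals the turn at $u$ (from $e$ into the first edge of $\gamma$) plus $T(\gamma)$ plus the turn at $v$ (from the last edge of $\gamma$ into $f$), and similarly for $\delta$. Subtracting, the $e$- and $f$-dependent boundary turns at $u$ and $v$ combine into exactly the turn angles of the closed loop $\sigma = \gamma*(-\delta)$ at $u$ and $v$, so the difference $T(e*\gamma*f) - T(e*\delta*f)$ equals $\pm\bigl(360\degrees - 360\degrees\bigr) = 0$ once Lemma \ref{thm:gaussbonnetessential} is invoked for $\sigma$. Here condition (a) is exactly what is needed to make this accounting valid: since $e$ and $f$ lie outside $\sigma$ and share no endpoint with it, attaching them does not create new self-intersections that would spoil essential simplicity, and the "outside lies to the right" condition needed for Lemma \ref{thm:gaussbonnetessential} transfers from the counterclockwise orientation of $\sigma$.

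The main obstacle I expect is the careful verification that $\sigma = \gamma*(-\delta)$ is \emph{essentially simple in the precise technical sense} of Definition \ref{def:essentiallysimple} --- in particular that every backtrack, whether originating in $\gamma$ or in $\delta$, remains admissible after orientation reversal and concatenation, and that the crossings promised to be non-transversal by (c) are non-transversal with the correct cyclic order $abcd$ or $dcba$ rather than the forbidden $abdc$. The algebraic turn-angle bookkeeping at $u$ and $v$ (relating the boundary turns involving $e,f$ to the turns of the closed loop) is routine but must be written out with signs, since a wrong sign would flip a left turn to a right turn; I would draw the reader's attention to Figure \ref{fig:condpathind} to fix the configuration and then let Lemma \ref{thm:gaussbonnetessential} do the analytic work.
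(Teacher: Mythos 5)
Your first instinct was the paper's actual proof: the closed path $e*\gamma*f*(-f)*(-\delta)*(-e)$ that you wrote down and then abandoned is exactly the path the paper uses. There, condition (a) guarantees that the two backtracks along $f$ and $e$ are admissible (both edges lie outside and hence to the right of the rest of the path), so the whole closed path is essentially simple and counterclockwise, and Lemma \ref{thm:gaussbonnetessential} gives $x+180\degrees-y+180\degrees=360\degrees$ directly, i.e.\ $x=y$, with the two backtrack turns of $+180\degrees$ doing all of the delicate sign bookkeeping for you. (The paper also first deletes all admissible backtracks via Lemma \ref{thm:backtrackdontmatter} and strips any common initial/final segments of $\gamma$ and $\delta$, so that $\gamma*(-\delta)$ is genuinely backtrack-free; this disposes of the issue you flag but leave unresolved, namely whether a backtrack of $\delta$ stays admissible after reversal and concatenation.)

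The route you switched to instead -- apply Gau{\ss}-Bonnet to $\sigma=\gamma*(-\delta)$ alone and account for the $e$- and $f$-turns at $u$ and $v$ separately -- has a genuine gap at its central step. Writing $g_1,g_n$ and $d_1,d_m$ for the first/last edges of $\gamma$ and $\delta$, the difference of the two turn angles is the turn from $e$ to $g_1$ minus the turn from $e$ to $d_1$, plus $T(\gamma)-T(\delta)$, plus the turn from $g_n$ to $f$ minus the turn from $d_m$ to $f$; substituting $\mathrm{Turn}(\sigma)=T(\gamma)-T(\delta)+\angle(g_n\to -d_m)+\angle(-d_1\to g_1)$ shows that the boundary turns do \emph{not} ``combine into exactly the turn angles of $\sigma$ at $u$ and $v$'': each corner contributes an extra correction that is $\pm180\degrees$ (the three normalized angles at a corner only cancel modulo $360\degrees$). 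Indeed, if the corner terms cancelled exactly as you claim, the difference would come out to $\mathrm{Turn}(\sigma)=360\degrees$, not $0$; to get $0$ you must show both corrections are $-180\degrees$ rather than $+180\degrees$, and that sign determination is precisely where the hypothesis that $e$ and $f$ lie outside the loop has to be used geometrically. Since the entire content of the lemma (as the paper remarks) is that the two turn angles agree exactly and not merely modulo $360\degrees$, this is not routine bookkeeping but the crux of the proof -- and it is exactly what the inserted admissible backtracks in the abandoned closed path handle automatically. So either return to that construction, or supply an explicit argument fixing the two $\pm180\degrees$ signs from condition (a).
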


\begin{figure}[t]
\centering
\begin{tikzpicture}[scale=1.1]
    \draw (0,0) -- node [above] (e) {$e$} (2,0) node [above] (u) {$u$} -- ++(1,0.3) node (o) {} -- ++(0.7,0.3) node (p) {} -- ++(-0.5,1) -- ++(0.95,0) -- ++(0.1,-1) -- node (q) {} ++(0.05,0) -- ++(-0.1,1) -- ++(0.95,0) -- (6,2) node [above] (v) {$v$} -- node [above right] (f) {$f$} (7.3,1.8);
    \draw [dashed] (0,-0.05) -- (2,-0.05) -- ++(0.7,-0.4) -- ($(o.center)+(315:0.05)$) -- ($(p.center)+(315:0.05)$) -- ($(q.center)-(0,0.05)$) -- ++(1.3,0.5) -- (6,2-0.05) -- (7.3,1.8-0.05);
\end{tikzpicture}
\caption{Conditional path-independence: The turn angle from $e$ to $f$ is the same along either path. Note that both paths are going in a left-right direction in this picture.}
\label{fig:condpathind}
\end{figure}
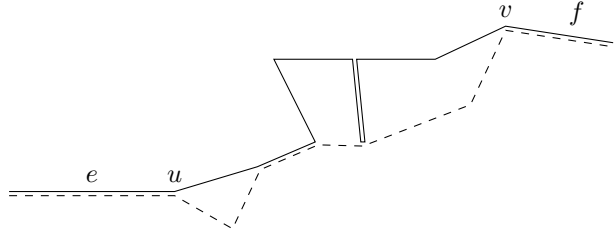

Before we get to the proof, it is worth pointing out that without the conditions, the turn angles along the two paths would only agree modulo $360\degrees$.

\begin{proof}
	First note that due to Lemma \ref{thm:backtrackdontmatter}, we can remove all admissible backtracks from $\gamma$ and $\delta$. Furthermore, in case $\gamma$ and $\delta$ agree on the first $i$ edges, we can write $\gamma=\epsilon*\gamma'$ and $\delta=\epsilon*\delta'$ for some path $\epsilon$ such that $\gamma'$ and $\delta'$ do not agree on their first edge. This reduces the proof to the question if $\gamma'$ and $\delta'$ produce the same turn angle. We therefore assume that $\gamma$ and $\delta$ do not agree on their first edge. For a similar reason, we assume that they don't agree on their last edge.
	
	$\gamma$ and $\delta$ are now both simple (since we removed all backtracks). That means that $-\gamma$ and $-\delta$ are also simple. Since $\gamma$ and $\delta$ don't agree on their first or last edge, the closed path $\gamma*(-\delta)$ also has no backtracks. Combining $\gamma$ and $-\delta$ can also not have produced transversal crossings by condition (c). It follows that $\gamma*(-\delta)$ is essentially simple. By the same arguments, $\delta*(-\gamma)$ is essentially simple.
	
	We can assume that $\gamma*(-\delta)$ is counterclockwise according to Definition \ref{def:counterclockwise}. This can be seen as follows: There must be some edge $e$ so that the outside is to the left or right of it (since the boundary of the unbounded component consists of edges of the path). If the outside is \emph{not} to the right of $e$, replace $\gamma*(-\delta)$ with $\delta*(-\gamma)$ and $e$ with $-e$. Therefore, after relabeling if necessary, we can assume that the outside is to the right of $e$. Starting at $e$, we follow the path $\gamma*(-\delta)$. Each of $\gamma$ and $-\delta$ is simple, therefore $\gamma*(-\delta)$ is simple except where $\gamma$ and $-\delta$ cross non-transversally. Refer to Figure \ref{fig:transcross} which demonstrates that if $\gamma$ and $\delta$ meet non-transversally and we arrive at the crossing with the outside to the right, we will also leave the crossing with the outside to the right (the outside would \emph{also} be to the left of the path during the crossing, which we allow). So the outside is always to the right of $\gamma*(-\delta)$ and therefore this is an essentially simple counterclockwise path.
	
	Now consider the closed path $\alpha=e*\gamma*f*(-f)*(-\delta)*(-e)$. Recall that $e$ and $f$ lie \emph{outside} $\gamma*(-\delta)$ and have no endpoints in common. This implies two things: (1) $\alpha$ is still a counterclockwise path. (2) $e$ and $f$ both lie to the right of the remainder of the path. The latter means that the two backtracks of $\alpha$ along $e$ and $f$ are admissible. Note that since $\gamma$ and $\delta$ don't agree on their first or last edges and have no backtracks themselves, there are no further backtracks. Therefore $\alpha$ is still essentially simple and Gau\ss-Bonnet as specified in Lemma \ref{thm:gaussbonnetessential} applies. Recalling that we consider backtracking to be a turn by $+180\degrees$ and setting the turn angle along $e*\gamma*f$ to be $x$ and the turn angle along $e*\delta*f$ to be $y$, we get
	\begin{align*}
		x+180\degrees-y+180\degrees=360\degrees
	\end{align*}
	Note that $\delta$ is free of backtracks, so the turn angle along $-\delta$ is in fact $-y$ since a turn where we run into issues with $\pm180\degrees$ doesn't happen.
	It follows that $x=y$.
\end{proof}

Another way of thinking of the turn angle along an essentially simple path that further illustrates the conditional path independence is: translate the initial edge $e$ of the path to an edge $e'$ that ends at the point where the terminal edge $f$ starts. The turn angle from $e'$ to $f$ at that point is now the same as the turn angle from $e$ to $f$ along the path.

We are now considering paths on a geodesic net, which are of course also piecewise geodesic paths.
\begin{definition}[First and Second Right Turn]\label{def:firstsecondturn} Consider a path through a balanced vertex.
	\begin{enumerate*}
		\item If the outgoing edge of the path immediately follows the incoming edge in counterclockwise order, we say that the path \emph{takes the first right turn}.
		\item If the outgoing edge of the path is the second edge following the incoming edge in counterclockweise order, we say that the path \emph{takes the second right turn}.
	\end{enumerate*}
\end{definition}
An example can be seen in figure \ref{fig:combinedanglev2}. Note that whenever a path takes the first or second right turn at a balanced vertex (which always has at least degree $3$), it is not backtracking.

In the context of these definitions, we will revisit the Special Combined Angle Lemma \ref{thm:specialcombinedangle} and arrive at the following three lemmas:

\begin{lemma}[First Turn Lemma]\label{thm:firsturn}
	If a path takes the first right turn at a balanced vertex, the turn angle is negative.
\end{lemma}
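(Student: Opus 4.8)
The plan is to deduce this essentially for free from Lemma~\ref{thm:lessthan180}, once the orientation conventions are untangled; the whole argument takes place in the tangent space $T_v\Sigma$ at the balanced vertex $v$ in question, so no curvature hypothesis is needed.

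First I would fix notation: let $e$ be the incoming edge of the path at $v$ and $f$ its outgoing edge, both edges of the geodesic net. The key observation is that the ray out of $v$ representing the incoming edge $e$ points \emph{backwards} along the path (opposite to the direction of travel), whereas the ``extension of $e$ to the other side of the vertex'' that appears in the definition of the turn angle is the \emph{opposite} ray, pointing forward. Since the path takes the first right turn, $f$ is by definition the net-edge immediately following $e$ in counterclockwise order around $v$; because a balanced vertex has degree at least $3$, this turn is not a backtrack (as already noted right after Definition~\ref{def:firstsecondturn}), so $f\neq e$ and $f\neq -e$, and in particular the counterclockwise angle from the ray of $e$ to the ray of $f$ is strictly positive.

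Next I would apply Lemma~\ref{thm:lessthan180} to the edge $e$ and its immediate counterclockwise successor $f$: the counterclockwise angle $\psi$ from the ray of $e$ to the ray of $f$ satisfies $0\degrees<\psi<180\degrees$. Then I unwind the definition of the turn angle, which is the signed (counterclockwise-positive) angle from the forward ray to the ray of $f$. The forward ray is the ray of $e$ rotated by $180\degrees$, and the ray of $f$ is the ray of $e$ rotated counterclockwise by $\psi$; hence the ray of $f$ is the forward ray rotated counterclockwise by $\psi-180\degrees$, and $\psi-180\degrees\in(-180\degrees,0\degrees)$. Therefore the turn angle equals $\psi-180\degrees<0\degrees$, which is precisely the claim.

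The only genuine obstacle here is the bookkeeping: correctly identifying which ray out of $v$ stands for the incoming edge (the backward-pointing one) and recognizing that the ``extension of $e$'' in the turn-angle definition is its opposite ray. Once that is pinned down, the argument requires no computation and is immediate from Lemma~\ref{thm:lessthan180}; the non-backtracking remark is used only to guarantee the strict inequality $\psi>0\degrees$, hence the strict negativity of the turn angle.
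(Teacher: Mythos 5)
Your proposal is correct and is essentially the paper's own argument: the paper simply states that the First Turn Lemma is a direct consequence of Lemma~\ref{thm:lessthan180}, and your careful unwinding of the turn-angle convention (the angle $\psi\in(0\degrees,180\degrees)$ from the backward ray to the outgoing edge giving a turn angle $\psi-180\degrees<0\degrees$) is exactly the bookkeeping that deduction requires.
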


This Lemma is a direct consequence of Lemma \ref{thm:lessthan180}.

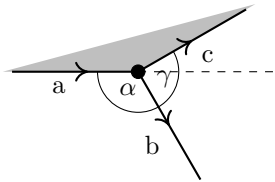
\begin{figure}[b]
\centering
\begin{tikzpicture}[scale=1.8]
	\node at (180:1) (aend) {};
	\node at (300:1) (bend) {};
	\node at (30:1) (cend) {};
	\path [fill=lightgray] (0,0) -- (cend.center) -- (aend.center) -- cycle;
	\draw [thick,decoration={markings,mark=at position 1/2 with {\arrow{<[scale=2.5,
          length=2,
          width=3]}}},postaction={decorate}] (0,0) -- node [below left] {a} (aend);
	\draw [thick,decoration={markings,mark=at position 1/2 with {\arrow{>[scale=2.5,
          length=2,
          width=3]}}},postaction={decorate}] (0,0) -- node [below left] {b} (bend);
	\draw [thick,decoration={markings,mark=at position 1/2 with {\arrow{>[scale=2.5,
          length=2,
          width=3]}}},postaction={decorate}] (0,0) -- node [below right] {c} (cend);
	\draw [fill=black] (0,0) circle [radius=0.05];
	\draw (180:0.3) arc [radius=0.3,start angle = 180,end angle = 300];
	\draw (300:0.3) arc [radius=0.3,start angle = 300,end angle = 390];
	\node at (240:0.15) {$\alpha$};
	\node at (345:0.2) {$\gamma$};
	\draw [dashed] (0,0) -- (1,0);
\end{tikzpicture}
\caption{Note that the turn angle from $a$ to $b$ (first right turn) and from $a$ to $c$ (second right turn) is measured in reference to the dashed line and compare with figure \ref{fig:combinedangle}.}
\label{fig:combinedanglev2}
\end{figure}

\begin{lemma}[Second Turn Lemma]\label{thm:secondturn}
	Consider a balanced vertex with incident edges $a,b,c$ following counterclockwise directly in that order. If the path from $a$ to $c$ (i.e. a path that takes the second right turn) has a positive turn angle as depicted in figure \ref{fig:combinedanglev2}, then:
	\begin{itemize*}
		\item the turn angle from $a$ to $c$ lies in $(0\degrees,60\degrees]$,
		\item the turn angle from $a$ to $b$ lies in $(-120\degrees,-60\degrees]$, and
		\item the turn angle from $-b$ to $c$ lies in $(-120\degrees,-60\degrees]$.
	\end{itemize*}
\end{lemma}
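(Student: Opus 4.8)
The plan is to derive the three inequalities directly from the Special Combined Angle Lemma~\ref{thm:specialcombinedangle} together with the General Combined Angle Lemma~\ref{thm:combinedangle}, by translating the language of ``combined angle'' into the language of ``turn angle''. Recall from the definition of the turn angle that when a path enters along $a$ and leaves along $c$ (taking the second right turn, so no backtracking), the turn angle is the signed angle from the \emph{extension} of $a$ (the dashed ray opposite to $a$ in figure~\ref{fig:combinedanglev2}) to $c$, measured positively for a left turn. Writing $\alpha$ for the angle from $a$ to $b$ and $\gamma$ for the angle from $b$ to $c$ (so that $\alpha+\gamma$ is the combined angle of $b$), the geometry in figure~\ref{fig:combinedanglev2} shows that the turn angle from $a$ to $c$ equals $(\alpha+\gamma)-180\degrees$ when $\alpha+\gamma\geq 180\degrees$ and equals $(\alpha+\gamma)-180\degrees$ in general as a signed quantity; the hypothesis that this turn angle is positive is therefore exactly the hypothesis $\alpha+\gamma>180\degrees$, i.e. the combined angle of $b$ exceeds $180\degrees$.

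First I would dispose of the case analysis on the degree $n$ of the vertex. If $n=3$, the combined angle is $240\degrees$ by Lemma~\ref{thm:combinedangle}(a), so the turn angle from $a$ to $c$ is exactly $60\degrees$, and $\alpha+\gamma=240\degrees$ with $60\degrees<\alpha<120\degrees$ impossible to pin down further from that lemma alone — but for degree $3$ the three edges are equiangular at $120\degrees$, so $\alpha=\gamma=120\degrees$; hmm, that gives $\alpha=120\degrees$, not $<120\degrees$. So I need to be careful: the clean statement $60\degrees<\alpha<120\degrees$ really comes from Lemma~\ref{thm:specialcombinedangle}, which assumes $n\geq 5$. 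For $n=3$ I would instead check the three bullets by hand ($\alpha=\gamma=120\degrees$, turn from $a$ to $c$ is $60\degrees\in(0\degrees,60\degrees]$, turn from $a$ to $b$ is $120\degrees-180\degrees=-60\degrees\in(-120\degrees,-60\degrees]$, and turn from $-b$ to $c$ is likewise $-60\degrees$); for $n=4$ the combined angle is exactly $180\degrees$ by Lemma~\ref{thm:combinedangle}(b), contradicting positivity of the turn angle, so this case is vacuous; for $n\geq 5$ I invoke Lemma~\ref{thm:specialcombinedangle} to get $60\degrees<\alpha<120\degrees$ and $60\degrees<\gamma<120\degrees$.

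With $60\degrees<\alpha<120\degrees$ and $60\degrees<\gamma<120\degrees$ in hand (in the $n\geq 5$ case; the $n=3$ case is the boundary), the three bullets follow by arithmetic. The turn angle from $a$ to $b$ is $\alpha-180\degrees$ (first right turn, signed), which lies in $(60\degrees-180\degrees,\,120\degrees-180\degrees)=(-120\degrees,-60\degrees)$, and including the degree-$3$ boundary value $-60\degrees$ gives the half-open interval $(-120\degrees,-60\degrees]$. Symmetrically, the turn angle from $-b$ to $c$ is $\gamma-180\degrees\in(-120\degrees,-60\degrees]$, using that $-b$ is the extension of $b$ so the relevant angle at the vertex is precisely $\gamma$. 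Finally the turn angle from $a$ to $c$ is $(\alpha+\gamma)-360\degrees$... no — it is $(\alpha+\gamma)-180\degrees$; combined with $\alpha+\gamma\leq 240\degrees$ from Lemma~\ref{thm:combinedangle} and $\alpha+\gamma>180\degrees$ from the hypothesis, this lies in $(0\degrees,60\degrees]$. The main obstacle, such as it is, is purely bookkeeping: getting every sign and every reference ray right in the turn-angle convention (extension of the incoming edge, positive for left turns), and correctly identifying which interval endpoints are attained (only at degree $3$) so that the intervals come out half-open exactly as stated. Everything else is a direct quotation of Lemmas~\ref{thm:combinedangle} and~\ref{thm:specialcombinedangle}.
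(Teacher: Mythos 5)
Your proof is correct and follows essentially the same route as the paper: the paper also treats the lemma as a direct reformulation of the General and Special Combined Angle Lemmas via the case split degree $3$ (extremal values $60\degrees$, $-60\degrees$, $-60\degrees$), degree $4$ (vacuous, turn angle always zero), and degree $\geq 5$ (interior of the intervals from Lemma \ref{thm:specialcombinedangle}). Your bookkeeping translating combined angles into turn angles (turn from $a$ to $c$ equals $(\alpha+\gamma)-180\degrees$, etc.) is exactly the intended identification, including the correct observation that the degree-$3$ case must be checked separately since Lemma \ref{thm:specialcombinedangle} only covers $n\geq 5$.
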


Compare figure \ref{fig:combinedanglev2} with figure \ref{fig:combinedangle} and it is immediate that this lemma is a reformulation of Lemma \ref{thm:combinedangle} (for vertices of degree $3$ and $4$) and Lemma \ref{thm:specialcombinedangle} (for vertices of degree $5$ or more). More specifically, for a vertex of degree $3$, the turn angles reach the extremal cases of $60\degrees$, $-60\degrees$, $-60\degrees$ respectively. For a vertex of degree $4$, the lemma is vacuously true since the turn angle along the second right turn can never be positive (it will, in fact, always be zero). For a vertex of degree $5$ or more, the turn angles lie in the interior of the three given intervals which follows from Lemma \ref{thm:specialcombinedangle}.

The situation in the following lemma is visualized in figure \ref{fig:60degrees}.

\begin{lemma}[$60\degrees$ Lemma, flat version]\label{thm:60degrees}
	Consider a path going through four edges $a,b,c,d$ of a geodesic net on the Euclidean plane and three vertices $u,v,w$ in the order $a,u,b,v,c,w,d$. Assume that
	\begin{enumerate*}
		\item $a\neq -b$, $c\neq -d$
		\item $u$ and $w$ are balanced ($v$ can be balanced or not).
		\item $b$ immediately follows $a$ at $u$ (i.e. $a*b$ takes the first right turn).
		\item $d$ immediately follows $c$ at $w$ (i.e. $c*d$ takes the first right turn).
		\item The convex hull of $u,v,w$ contains no unbalanced vertices (except, possibly, $v$ itself).
	\end{enumerate*}
	Then the turn angle from $a$ to $d$ along that path is at most $60\degrees$.
\end{lemma}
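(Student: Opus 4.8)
The plan is to write the total turn angle $T$ from $a$ to $d$ as $T=\tau_u+\tau_v+\tau_w$, the sum of the turn angles at the three vertices, and then dispose of most of the situation cheaply. By hypotheses (b)--(d) the path takes a first right turn at both $u$ and $w$, so the First Turn Lemma \ref{thm:firsturn} gives $\tau_u<0$ and $\tau_w<0$. Hence if the turn at $v$ satisfies $\tau_v\le 0$, then already $T<0\le 60\degrees$ and we are done. So from now on I would assume $\tau_v>0$; in that case $u,v,w$ span a genuine (non-degenerate) triangle and the path makes a left turn at $v$, so that $u\to v\to w$ runs counterclockwise around that triangle. (Degenerate configurations — $u,v,w$ collinear or not all distinct — are disposed of directly; e.g. $w=u$ forces $c=-b$, and then hypotheses (c), (d) together with Lemma \ref{thm:lessthan180} force the counterclockwise angle from $a$ to $b$ and the counterclockwise angle from $-b$ to $d$ at $u$ to sum to less than $180\degrees$, since otherwise $d$ would lie strictly between $a$ and $b$; whence again $T<0$.)

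Next I would invoke the Euclidean angle sum. Writing $\alpha_u,\alpha_v,\alpha_w$ for the interior angles of the triangle $uvw$, we have $\alpha_u+\alpha_v+\alpha_w=180\degrees$, and going around the corner at $v$ contributes a turn $\tau_v=180\degrees-\alpha_v$. Substituting,
\[
T=\tau_u+\tau_w+(180\degrees-\alpha_v)=\rho_u+\rho_w,\qquad \rho_u:=\tau_u+\alpha_u,\ \ \rho_w:=\tau_w+\alpha_w .
\]
Here $\rho_u$ is exactly the turn at $u$ from the incoming edge $a$ to the straight segment $\overline{uw}$ (first the turn from $a$ onto the side $uv$, then the interior angle from $uv$ onto $uw$), and $\rho_w$ is the turn at $w$ from $\overline{uw}$ onto $d$; equivalently, Conditional Path Independence (Lemma \ref{thm:condpathind}), applied to the two $u$--$w$ subpaths $b*c$ and $\overline{uw}$ — whose combination is just the boundary of the triangle, hence automatically essentially simple, with $a$ and $d$ lying outside it — shows that $T$ equals the turn from $a$ to $d$ along the ``shortcut'' path $a*\overline{uw}*d$. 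It therefore remains to prove $\rho_u+\rho_w\le 60\degrees$.

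This last inequality is the heart of the matter, and it is where hypothesis (e) is finally used. The plan is to reroute the shortcut through edges of the net lying inside the triangle. At $u$, let $b'$ be the edge immediately following $b=uv$ in counterclockwise order. If the segment $\overline{uw}$ lies in the sector between $b$ and $b'$, then $\rho_u$ is at most the turn of the \emph{second} right turn at $u$, which the Second Turn Lemma \ref{thm:secondturn} bounds by $60\degrees$ (and which is $\le 0$ if it is non-positive); symmetrically at $w$. Otherwise $b'$ points strictly into the triangle, and following it we reach a vertex which — since it cannot be reached by crossing the net-edges $uv$ or $vw$, and by hypothesis (e) — is $v$, or $w$, or a balanced interior vertex, or a vertex lying across the open segment $\overline{uw}$. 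In the balanced-interior case the walk continues taking first right turns, each contributing a non-positive turn (First Turn Lemma), and finiteness of the net (as in the proof of Lemma \ref{thm:convexhull}) forces the walk to leave the triangle or to reach $w$; combining the single second right turn ($\le 60\degrees$) with the subsequent non-positive first right turns, and matching the terminal edge against $d$ at $w$, gives $\rho_u+\rho_w\le 60\degrees$.

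I expect this last step to be the main obstacle: one must organize the rerouting so that (i) it actually terminates at — or is forced to aim at — $w$, (ii) the turn-lemma estimates telescope to $60\degrees$ rather than merely adding to $120\degrees$, and (iii) every closed path formed along the way is essentially simple (only admissible backtracks and non-transversal crossings), so that Gau\ss--Bonnet \ref{thm:gaussbonnetessential} and Lemma \ref{thm:condpathind} remain applicable. It is also worth verifying that hypothesis (e) is genuinely needed — an unbalanced vertex inside the triangle would leave $\alpha_v$, hence $\tau_v$, unconstrained and the bound would fail — which is consistent with the positively curved counterexamples constructed later in the paper.
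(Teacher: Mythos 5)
Your reduction to the non-degenerate case (both first right turns give negative turns, so you may assume the turn at $v$ is positive) matches the paper's opening step, and your observation that the total turn equals the turn along the shortcut $a*\overline{uw}*d$ is sound. But the heart of the lemma --- why the total is at most $60\degrees$ rather than the naive $60\degrees+60\degrees=120\degrees$ you would get from applying the Second Turn Lemma once at each end --- is exactly the step you leave open, and your sketch of the rerouting does not contain the idea that closes it. The paper's proof builds an auxiliary path $\gamma$ from $u$ toward $w$ inside the triangle (second right turn at $u$, then always first right turns, taking the second right turn only when the first would lead to $v$), and then makes the following key move: if the turn along $a*\gamma*d$ is positive anywhere, the \emph{first} such vertex $x$ and the \emph{last} such vertex $y$ must each be adjacent to $v$ (a positive turn forces a second right turn, which by construction happens only to avoid $v$). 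One then replaces the middle of $\gamma$ by the two edges $e_v$, $f_v$ through $v$ itself, and the Second Turn Lemma in its sharper form (turn from the incoming edge onto the edge toward $v$ lies in $(-120\degrees,-60\degrees]$, and likewise from the edge leaving $v$) gives a total of at most $-60\degrees+180\degrees-60\degrees=60\degrees$, all remaining turns being negative; conditional path independence transfers this bound back to $a*b*c*d$. Without routing through $v$ and using the $\leq 180\degrees$ turn there as the ``pivot'' between the two $\leq-60\degrees$ contributions, the estimates do not telescope, which is precisely the obstacle you flag yourself.

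A second, smaller gap: your walk of first right turns may exit the triangle without ever reaching $w$ (the paper's Case 2), and this is not a pathology you can wave away --- it requires its own treatment, including the subcase where $a$ or $d$ dips below the line through $u$ and $w$, which the paper handles by truncating the auxiliary path where it last crosses $\overline{uw}$ and continuing along that segment, and by running a mirror-image left-turning path from $w$ to produce the vertex $y$. You would also need the termination argument for the auxiliary path (the paper's ``rightmost ray from $v$'' argument) and the verification that all the closed curves used are essentially simple so that Gau\ss--Bonnet and conditional path independence apply; these are stated in your item (iii) as desiderata but not carried out. So the proposal sets up the right framework and cites the right lemmas, but the decisive argument is missing rather than merely compressed.
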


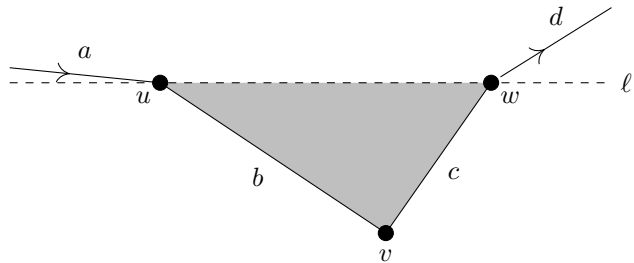
\begin{figure}[b]
\centering
\begin{tikzpicture}[scale=2]
	\path [fill=lightgray] (-1.5,1) -- (0,0) -- (0.7,1) -- cycle;
	\draw [decoration={markings,mark=at position 2/5 with {\arrow{>[scale=2.5,
          length=2,
          width=3]}}},postaction={decorate}] (-2.5,1.1) -- node [above=0.1cm] {$a$} (-1.5,1) node (u) {};
    \draw (u.center) -- node [below left] {$b$} (0,0) node (v) {} -- node [below right] {$c$} (0.7,1) node (w) {};
    \draw [decoration={markings,mark=at position 2/5 with {\arrow{>[scale=2.5,
          length=2,
          width=3]}}},postaction={decorate}] (w) -- node [above=0.1cm] {$d$} (1.5,1.5);
	\draw [fill=black] (u) circle [radius=0.05] node [below left] {$u$};
	\draw [fill=black] (v) circle [radius=0.05] node [below=0.1cm] {$v$};
	\draw [fill=black] (w) circle [radius=0.05] node [below right] {$w$};
	\draw [dashed] (-2.5,1) -- (1.5,1);
	\node at (1.5,1) [right] {$\ell$};
\end{tikzpicture}
\caption{Setup of the $60\degrees$ Lemma \ref{thm:60degrees}. There are no unbalanced vertices in the grey area. The line $\ell$ is used for Case 2 of the proof.}
\label{fig:60degrees}
\end{figure}

Note that by definition, $v$ is different from both $u$ and $w$. However, we allow $b=-c$ which then implies $u=w$. Note that we can't have both $b=-c$ and $a=-d$ because then $u=w$ would be a degree $2$ vertex which, for balanced vertices, is not possible.

The result of the lemma can be reformulated in the following way: If $a$ is translated to an edge $a'$ ending at $w$, the turn angle from $a'$ to $d$ is at most $60\degrees$, i.e. the clockwise angle from $a'$ to $d$ is at least $120\degrees$.

\begin{proof}
	First note the following two trivial cases:
	\begin{itemize*}
		\item If $u=w$, then the combined angle of $b=-c$ at $u=w$ is at most $240\degrees$ according to Lemma \ref{thm:combinedangle}. The turn angle from $a$ to $d$ is the combined angle minus $180\degrees$. The claim follows.
		\item If the turn angle from $b$ to $c$ at the vertex $v$ is nonpositive, note that the turn angles from $a$ to $b$ and from $c$ to $d$ are negative. Therefore the sum of all three is negative and the claim follows.
	\end{itemize*}
	
	We can therefore assume that $u\neq w$ and that the turn angle from $b$ to $c$ is positive. This means that the line $\ell$ through $u$ and $w$ is well-defined and unique and that we can rotate the picture such that $\ell$ is horizontal, $u$ is to the left of $w$ and $v$ is below $\ell$. So figure \ref{fig:60degrees} does in fact describe the only interesting situation (however $a$ or $d$ could also be below $\ell$ unlike in the figure).

	We define a path $\gamma$ starting at $u$ as follows:
	\begin{itemize*}
		\item Counting from $a$, we take the second right turn at $u$ (see Definition \ref{def:firstsecondturn}) and leave $u$ along that edge. This is the first edge of $\gamma$.
		\item From now on, always take the first right turn, unless that edge would lead to $v$. In that case, take the second right turn.
		\item Terminate as soon as the path either reaches $w$ or as soon as it left the convex hull of $u,v,w$ (this might mean that $\gamma$ only consists of a single edge starting at $u$).
	\end{itemize*}
	
	We first need to argue that this path is well-defined. In fact, there are no unbalanced vertices in the convex hull of $u,v,w$, so as long as we don't leave it (at which point the path terminates), we only reach balanced vertices and therefore a third edge in case we need it (to avoid $v$) always exists. Note that the same argument also implies that $\gamma$ doesn't backtrack.
	
	To show that the path terminates, assume that $\gamma$ never leaves the convex hull of $u,v,w$ but also never reaches $w$. Due to the finiteness of the geodesic net, it must eventually return to a vertex it has visited before. Note that $\gamma$ cannot have visited an unbalanced vertex since it never left the convex hull of $u,v,w$ and by definition never reaches $v$ (the only possibly unbalanced vertex in the convex hull). $\gamma$ therefore includes a closed polygon of balanced vertices which is contained inside the convex hull of $u,v,w$ in its entirety. Assume that this polygon is travelled counterclockwise by $\gamma$ (the clockwise case follows a very similar argument). Consider the rays from $v$ through each of the vertices of that polygon. One of these rays is the furthest to the right. Denote the vertex on the polygon that the ray reaches by $x$ (if there is more than one such vertex, pick the first one the ray reaches). The incoming edge of the path $\gamma$ is reaching $x$ from the left of the ray. Because $x$ is balanced, there must be an edge leaving $x$ to the right of the ray (Lemma \ref{thm:straightline}) and that edge cannot reach $v$. Instead of travelling counterclockwise along the polygon, $\gamma$ must therefore have taken one of the edges to the right of the ray, contradicting that $x$ is on the rightmost ray as described above. Therefore, the path indeed terminates at $w$ or leaves the convex hull of $u,v,w$ and is therefore well-defined.
	
	We now work on a case-by-case basis.
	
	\textbf{Case 1 $\gamma$ reaches $w$.}
	
	An example of $\gamma$ is given in figure \ref{fig:60degreesgamma}. Note that the path $a*\gamma*d$ and the path $a*b*c*d$ fulfil the requirements of conditional path independence as specified in Lemma \ref{thm:condpathind}. In fact:
	\begin{enumerate*}
		\item Due to $a*b$ taking the first right turn, $a$ lies outside the convex hull and so does $d$. The path $b*c*(-\gamma)$ on the other hand lies in the convex hull. Also, since we consider $u\neq w$, the edges $a$ and $d$ could only share the other endpoint which would contradict the first right turn conditions.
		\item We excluded the trivial case where $b=-c$, so $b*c$ is simple. We argued above that $\gamma$ can never return to a vertex. It follows that $\gamma$ is simple.
		\item $\gamma$ can't use $b$ or $c$ (it avoids any edges incident to $v$ by definition). Therefore, $b*c$ and $-\gamma$ never meet except at the endpoints.
	\end{enumerate*}
	So conditional path independence applies and the claim follows if the turn angle along $a*\gamma*d$ is at most $60\degrees$.
	
	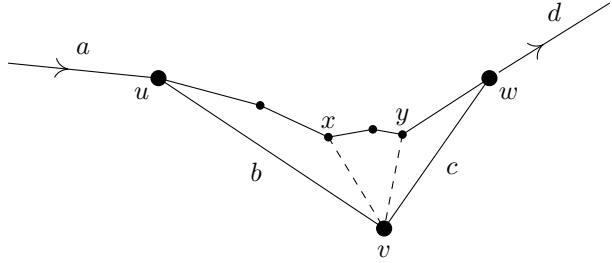
\begin{figure}[t]
	\centering
	\begin{tikzpicture}[scale=2]
		\draw [decoration={markings,mark=at position 2/5 with {\arrow{>[scale=2.5,
	          length=2,
	          width=3]}}},postaction={decorate}] (-2.5,1.1) -- node [above=0.1cm] {$a$} (-1.5,1) node (u) {};
	    \draw (u.center) -- node [below left] {$b$} (0,0) node (v) {} -- node [below right] {$c$} (0.7,1) node (w) {};
	    \draw [decoration={markings,mark=at position 2/5 with {\arrow{>[scale=2.5,
	          length=2,
	          width=3]}}},postaction={decorate}] (w) -- node [above=0.1cm] {$d$} (1.5,1.5);
		\draw [fill=black] (u) circle [radius=0.05] node [below left] {$u$};
		\draw [fill=black] (v) circle [radius=0.05] node [below=0.1cm] {$v$};
		\draw [fill=black] (w) circle [radius=0.05] node [below right] {$w$};
		\draw (u.center) -- ++(-15:0.7) node (r) {} -- ++ (-25:0.5) node (x) {} -- ++ (10:0.3) node (s) {} -- ++ (-10:0.2) node (y) {} -- (w.center);
		\draw [fill=black] (r) circle [radius=0.025] node {};
		\draw [fill=black] (x) circle [radius=0.025] node [above] {$x$};
		\draw [fill=black] (s) circle [radius=0.025] node {};
		\draw [fill=black] (y) circle [radius=0.025] node [above] {$y$};
		\draw [dashed] (x.center) -- (v.center);
		\draw [dashed] (y.center) -- (v.center);
	\end{tikzpicture}
	\caption{An example for the situation of Case 1. The upper path is $\gamma$. Note that the turn angle from $a$ to $d$ will be the same, no matter which of the paths we take. The dashed lines are $e_v$ and $f_v$ respectively.}
	\label{fig:60degreesgamma}
	\end{figure}

	If this angle is nonpositive, there is nothing to show. If it is positive, then at some point the turn angle at a vertex on $a*\gamma*d$ must be positive. Denote by $x$ the first edge with a positive turn angle and by $y$ the last edge with a positive turn angle. Note that the following argument still works in case any (or several) of $u,w,x,y$ coincide.
	
	At $x$: By the First Turn Lemma \ref{thm:firsturn}, $a*\gamma*d$ must have taken the second right turn at $x$. Therefore, $x$ is adjacent to $v$. Denote the incoming edge to $x$ along the path by $e_{in}$ and the edge from $x$ to $v$ by $e_{v}$. Now, by the Second Turn Lemma \ref{thm:secondturn}, the turn angle from $e_{in}$ to $e_{v}$ is in $(-120\degrees,-60\degrees]$.

	At $y$: By the First Turn Lemma \ref{thm:firsturn}, $a*\gamma*d$ must have taken the second right turn at $y$. Therefore, $y$ is adjacent to $v$. Denote the outgoing edge from $y$ along the path by $f_{out}$ and the edge from $v$ to $y$ by $f_{v}$. Now, by the Second Turn Lemma \ref{thm:secondturn}, the turn angle from $f_{v}$ to $f_{out}$ is in $(-120\degrees,-60\degrees]$.
	
	Consider the path $\gamma':=\gamma(\rightarrow x)* e_v* f_v* \gamma(y\rightarrow)$. Again, conditional path independence applies to $a*\gamma'*d$ and $a*b*c*d$. Let's check the three conditions:
	\begin{enumerate*}
		\item $a$ and $d$ still lie outside the convex hull and don't share any endpoints.
		\item $b*c$ is still simple. As long as it follows $\gamma$, the path $\gamma'$ must be simple. It could only fail to be simple at $e_v*f_v$ in the case that these two edges coincide. In that case it would be an admissible backtrack, since by definition (first right turns), $e_v$ and $f_v$ lie to the right of the remainder of $\gamma'$.
		\item Note that $b*c$ is on the boundary of the convex hull whereas $-\gamma'$ is in the convex hull. So $b*c$ and $-\gamma'$ could meet at edges (for example, if $x=u$, then $e_v=b$) but for $-\gamma'$ to cross $b*c$ transversally to the other side (to produce anything but the the non-transversal counterclockwise orders as given in definition \ref{def:nontransversal}), it would have to leave the convex hull which it doesn't.
	\end{enumerate*}
	\begin{samepage}
	It follows that it is enough to show that the turn angle along $a*\gamma'*d$ is $60\degrees$ or less. And in fact note:
	\begin{itemize*}
		\item At $x$, the turn angle is in $(-120\degrees,-60\degrees]$.
		\item At $v$, the turn angle is at most $180\degrees$.
		\item At $y$, the turn angle is in $(-120\degrees,-60\degrees]$.
		\item All other turn angles are (by the choice of $x$ and $y$) negative.
	\end{itemize*}
	Since the turn angle along $a*\gamma'*d$ is the sum of these angles, the claim follows.
	\end{samepage}

	\textbf{Case 2 $\gamma$ leaves the convex hull of $u,v,w$ and terminates.}
	
	\nopagebreak
	
	We now need to consider three subcases depending on the position of $a,d$ relative to the line $\ell$. Recall that $\ell$ is the horizontal line through $u$ and $w$. Denote by $\overline{\ell}$ the segment of $\ell$ between $u$ and $w$.
	
	\textbf{Case 2a both $a$ and $d$ lie above $\ell$.} Because $\gamma$ left the convex hull of $u,v,w$ and $a$ lies above $\ell$, at some vertex the turn angle along $a*\gamma$ must be positive. Pick the first vertex for which that is the case and call it $x$ (possibly this is $u$). Define $e_{in}$ and $e_v$ as above.
	
	We define a path $\epsilon$. Note that the following is the mirror image of the way we found $\gamma$: At $w$, counting from $d$, take the second left turn. Now always take the first left turn unless the edge leads to $v$ in which case we take the second left turn. The path terminates when reaching $u$ or when leaving the convex hull of $u,v,w$. The same arguments regarding the well-definedness still apply. But note that the resulting path can't actually reach $u$ or otherwise $\epsilon=-\gamma$ and $\gamma$ would have reached $w$, a case we already dealt with.
	
	If we consider $-(d*\epsilon)$ (i.e. $d*\epsilon$ with the opposite orientation), because $d*\epsilon$ left the convex hull of $u,v,w$ and $d$ lies above $\ell$, at some vertex the turn angle along $-(d*\epsilon)$ must be positive.  Pick the last vertex for which that is the case and call it $y$ (possibly this is $w$). Define $f_v$ and $f_{out}$ as above.
	
	We now have the exact same angle setup as in Case 1 and the claim follows.
	
	\textbf{Case 2b $d$ lies on or below $\ell$.} This is a rather pathological case that needs particular consideration.
	
	Assume that the turn angle along the path $a* b* c* d$ is more than $60\degrees$. This means that the turn angle along $a*\overline{\ell}*d$ is more than $60\degrees$ (In this case, it is obvious that conditional path independence applies to these two paths). However, since $d$ lies on or below $\ell$, the turn angle from $\overline{\ell}$ to $d$ must be nonpositive. It follows that the turn angle from $a$ to $\overline{\ell}$ must be more than $60\degrees$. In particular, the first edge of $\gamma$ must lie inside the convex hull, otherwise $a*\gamma$ would take a turn of more than $60\degrees$ at $u$, which contradicts the Second Turn Lemma \ref{thm:secondturn}.

	Consider the following path which we call $\gamma_+$: Truncate the path $\gamma$ so that it terminates at the point where it intersects $\overline{\ell}$ for the last time. This means it most likely won't terminate on a vertex but in the middle of an edge. Now continue along $\overline{\ell}$ to $w$.
	
	Note that the last edge of $\gamma$ is either on $\overline{\ell}$ or it starts below $\overline{\ell}$ and ends above it. So the last turn of $\gamma_+$ onto $\overline{\ell}$ must have been a right turn or no turn at all (nonpositive turn angle).
	
	The three conditions for conditional path independence are again met by $a*b*c*d$ and $a*\gamma_+*d$ (see the arguments in case 1). So the turn angles must be the same and therefore $a*\gamma_+*d$ also has a turn angle of more than $60\degrees$. But the last turn of $\gamma+$ had a nonpositive angle and the turn onto $d$ is also negative as argued above, so it follows that the turn angle along the path $\gamma$ must be more than $60\degrees$. So there must be at least one vertex along $\gamma$ with a positive turn angle. Call the first such vertex $x$ and the last such vertex $y$. As usual, we allow $x$ and $y$ to coincide.
	
	Using the same construction as before, we get a path $\gamma'=\gamma(\rightarrow x)* e_v* f_v* \gamma(y\rightarrow)$. Since $\gamma'$ and $\gamma$ agree on all edges apart from the middle section where $\gamma'$ lies to the right of $\gamma$ and has at most a single admissible backtrack (if $e_v=-f_v$), conditional path independence applies. But note that the turn angle along $\gamma$ is more than $60\degrees$, whereas, by the same arguments as in case 1, the turn angle along $\gamma'$ is at most $60\degrees$. This contradiction implies that our initial assumption that the turn angle along $a*b*c*d$ was more than $60\degrees$ must have been wrong.
	
	\textbf{Case 2c $a$ lies on or below $\ell$}. This case is simply the mirror image of Case 2b.
\end{proof}

\subsection{Circumference}

Recall that we are considering a geodesic net $G$ on the plane, which is a connected graph embedded in $\mathbb{R}^2$. In this context, note the following definition, which is quoted from \cite{MR2181153}, page 312.

\begin{definition}
	Consider a face $F$ of an embedded graph $G$. The \emph{boundary walk} of $F$ is a closed walk in $G$ that corresponds to a complete transversal of the perimeter of the polygonal region within the face. Note that vertices and edges can reoccur in a boundary walk. In particular, if both sides of an edge lie on a single region, the edge is retraced on the boundary walk.
\end{definition}

Using the concept of boundary walks, we can define the following:

\begin{definition}
	Since $G$ is a connected graph embedded in $\mathbb{R}^2$, it has exactly one outer face, which is the unbounded component of $\mathbb{R}^2\setminus G$. We call the boundary walk of this outer face the \emph{circumference} of $G$. By convention, we orient it counterclockwise, i.e. such that the outer face lies to the right of the curve. For an example, see figure \ref{fig:circumference}.
\end{definition}

Note that it is possible that the circumference, being a boundary walk, travels the same edge or vertex several times (as in the figure). It does not, however, cross itself transversally. Otherwise it would enclose other faces.

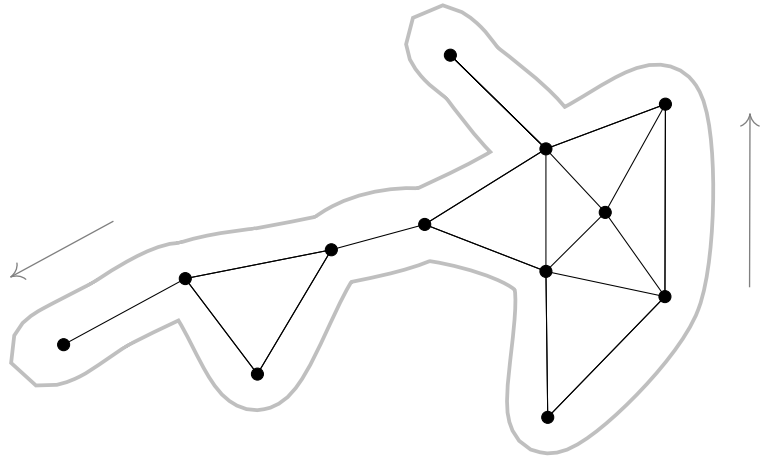
\begin{figure}
	\centering
	\begin{tikzpicture}[scale=0.8]
	\node at (2.0218511627906954,3.0989633488372066) (a) {};
	\node at (3.220370232558136,1.5109255813953466) (b) {};
	\node at (6.,4.) (c) {};
	\node at (10.,6.) (d) {};
	\node at (6.4264087441860385,6.814372465116275) (e) {};
	\node at (8.014446511627897,5.256297674418601) (f) {};
	\node at (8.014446511627897,3.2188152558139507) (g) {};
	\node at (9.992002976744173,2.799333581395346) (h) {};
	\node at (9,4.2) (z) {};
	\node at (8.044409488372084,0.7918141395348818) (i) {};
	\node at (4.448852279069762,3.578370976744183) (j) {};
	\node at (0.,2.) (k) {k};
	\path (a.center) -- node (startpoint) {} (j.center);
	\path (e.center) +(30:0.1) node (eneighbour) {};
	\path (k.center) +(130:0.1) node (kneighbour) {};
	\node at (a) [above left=1] (aarrownode) {};
	\node at (k) [above left=1] (karrownode) {};
	\node at (d) [right=1] (darrownode) {};
	\node at (h) [right=1] (harrownode) {};
	\begin{scope}
	\draw [line width=1cm,lightgray] plot [smooth] coordinates {(startpoint.center) (j.center) (c.center) (f.center) (e.center) (eneighbour.center) (f.center) (d.center) (h.center) (i.center) (g.center) (c.center) (j.center) (b.center) (a.center) (k.center) (kneighbour.center) (a.center) (startpoint.center)};
	\draw [line width=0.9cm,color=white] plot [smooth] coordinates {(startpoint.center) (j.center) (c.center) (f.center) (e.center) (eneighbour.center) (f.center) (d.center) (h.center) (i.center) (g.center) (c.center) (j.center) (b.center) (a.center) (k.center) (kneighbour.center) (a.center) (startpoint.center)};
	\draw [fill=white] (a.center) -- (j.center) -- (b.center) -- cycle;
	\draw [fill=white] (c.center) -- (f.center) -- (e.center) -- (f.center) -- (d.center) -- (h.center) -- (i.center) -- (g.center) -- cycle;
	\draw [decoration={markings,mark=at position 1 with {\arrow{>[scale=2.5,
          length=2,width=3]}}},postaction={decorate},color=gray,line width=0.5] (aarrownode) -- (karrownode);
	\draw [decoration={markings,mark=at position 1 with {\arrow{>[scale=2.5,
          length=2,width=3]}}},postaction={decorate},color=gray,line width=0.5] (harrownode) -- (darrownode);
	\end{scope}
	\draw [fill=black] (a) circle [radius=0.1];
	\draw [fill=black] (b) circle [radius=0.1];
	\draw [fill=black] (c) circle [radius=0.1];
	\draw [fill=black] (d) circle [radius=0.1];
	\draw [fill=black] (e) circle [radius=0.1];
	\draw [fill=black] (f) circle [radius=0.1];
	\draw [fill=black] (g) circle [radius=0.1];
	\draw [fill=black] (h) circle [radius=0.1];
	\draw [fill=black] (z) circle [radius=0.1];
	\draw [fill=black] (i) circle [radius=0.1];
	\draw [fill=black] (j) circle [radius=0.1];
	\draw [fill=black] (k) circle [radius=0.1];
	\draw (a.center) -- (b.center);
	\draw (a.center) -- (k.center);
	\draw (a.center) -- (j.center);
	\draw (b.center) -- (j.center);
	\draw (j.center) -- (c.center);
	\draw (c.center) -- (g.center);
	\draw (c.center) -- (f.center);
	\draw (f.center) -- (e.center);
	\draw (f.center) -- (d.center);
	\draw (f.center) -- (g.center);
	\draw (d.center) -- (h.center);
	\draw (g.center) -- (i.center);
	\draw (i.center) -- (h.center);
	\draw (h.center) -- (z.center);
	\draw (g.center) -- (h.center);
	\draw (g.center) -- (z.center);	
	\draw (f.center) -- (z.center);
	\draw (d.center) -- (z.center);
	\end{tikzpicture}
	\caption{Example for the circumference of a graph which is the path resulting from \enquote{shrinking} the grey curve onto the graph.}
	\label{fig:circumference}
\end{figure}

\begin{lemma}\label{thm:circumferencefirstrightturn}
	The circumference must always take the first right turn when going through a balanced vertex (see Definition \ref{def:firstsecondturn}).	
\end{lemma}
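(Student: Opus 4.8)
The plan is to use the only structural fact available about the circumference: it is the boundary walk of the outer face $F$ (the unbounded component of $\mathbb{R}^2\setminus G$), oriented so that $F$ lies to its right. I would fix a balanced vertex $v$ and one pass of the circumference through $v$ — say it enters along an edge $e$ (traversed toward $v$) and leaves along an edge $e'$ (traversed away from $v$) — and show that $e'$ is the edge that immediately follows $e$ in counterclockwise order around $v$, which is exactly the first right turn of Definition \ref{def:firstsecondturn}.

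First I would pass to a disk $D$ centred at $v$ small enough that $G\cap D$ consists only of the initial segments of the edges incident to $v$. Then $D\setminus G$ is a disjoint union of open circular sectors, each bounded by two cyclically consecutive incident edges, and each such sector lies entirely inside a single face of $G$. Next, I would identify which sector belongs to $F$ near the incoming edge: choosing coordinates so that $v$ is the origin and $e$ points along the positive $x$-axis, the circumference runs along $e$ toward $v$, i.e.\ in the $-x$ direction, so the side lying to its right is the upper half-plane; hence the face that touches $e$ from above near $v$ is $F$, and in particular the open sector $S$ bounded by $e$ and by the next edge $g$ encountered going counterclockwise from $e$ satisfies $S\subset F$. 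Finally, since the circumference is the boundary walk of $F$ and it rounds $v$ with $F$ (locally equal to $S$) on its right, the edge along which it leaves $v$ must be the second boundary edge of $S$, namely $g$. Thus $e'=g$ immediately follows $e$ in counterclockwise order, as claimed. Because $v$ is balanced it has degree at least $3$, so $g\neq e$ and no backtrack occurs; one may additionally invoke Lemma \ref{thm:lessthan180} to see the turn is genuinely to the right, but this is not needed for the combinatorial statement.

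I do not expect a serious obstacle. The only care required is in the bookkeeping of orientations — translating ``immediately to the right of the direction of travel along $e$'' into ``the sector counterclockwise-adjacent to $e$ at $v$'' — and in the observation that shrinking to a small disk reduces the whole question to the finitely many sectors meeting $v$. The conceptual heart of the argument, that a boundary walk with $F$ on its right must enter and leave a vertex through the two edges that bound one and the same outer-face sector there, is what makes the first right turn forced.
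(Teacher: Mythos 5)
Your argument is correct and rests on the same conceptual core as the paper's one-line proof: the outer face lies to the right of the circumference, so the boundary walk must go around the outer-face sector counterclockwise-adjacent to the incoming edge, which is exactly what the paper phrases contrapositively (``any other choice would leave the immediately following edge cut off to the right of the walk''). Your small-disk sector decomposition is just a more explicit, direct rendering of that same observation, so no gap and no genuinely new route.
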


\begin{proof}
	If the circumference took any other edge but the immediately following edge $e$ in counterclockwise order at a vertex, note that $e$ would lie to the right of the circumference. However, by definition, the circumference is a boundary walk and can therefore never cut off any edges from the graph.
\end{proof}

An immediate consequence of the previous lemma and Lemma \ref{thm:firsturn} is:

\begin{lemma}\label{thm:outeranglecircumference}
	The turn angle at each balanced vertex on the circumference must be negative.
\end{lemma}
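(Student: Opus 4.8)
The plan is to chain together the two immediately preceding results with essentially no extra work. First I would recall that Lemma~\ref{thm:circumferencefirstrightturn} says the circumference, at any balanced vertex it passes through, leaves along the edge that immediately follows the incoming edge in counterclockwise order; in the language of Definition~\ref{def:firstsecondturn}, the circumference \emph{takes the first right turn} there. (Since a balanced vertex has degree at least $3$, the incoming and the immediately-following edge are distinct, so this is a genuine turn and not a backtrack, which is exactly the situation to which the First Turn Lemma applies.)

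Next I would simply invoke the First Turn Lemma~\ref{thm:firsturn}: whenever a path takes the first right turn at a balanced vertex, its turn angle at that vertex is negative. Applying this to the circumference at the balanced vertex in question yields that the turn angle there is negative, which is the assertion of the lemma.

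There is no real obstacle here; the content has already been packaged into Lemmas~\ref{thm:circumferencefirstrightturn} and~\ref{thm:firsturn}, and this statement is just their composition. The only point deserving a word of care is making explicit that ``first right turn'' in Lemma~\ref{thm:firsturn} matches ``immediately following edge in counterclockwise order'' from Lemma~\ref{thm:circumferencefirstrightturn}, which is precisely Definition~\ref{def:firstsecondturn}(a), so the two lemmas dovetail directly.
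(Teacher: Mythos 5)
Your argument is correct and is exactly the paper's: the statement is presented there as an immediate consequence of Lemma~\ref{thm:circumferencefirstrightturn} combined with the First Turn Lemma~\ref{thm:firsturn}, which is precisely the composition you spell out.
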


\begin{samepage}
We can now conclude:
\begin{lemma}
	The circumference includes unbalanced vertices.
\end{lemma}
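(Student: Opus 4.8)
The plan is to argue by contradiction using the Gau\ss-Bonnet Lemma \ref{thm:gaussbonnetessential}. Suppose that every vertex occurring on the circumference of $G$ is balanced. Since any vertex of degree at most $2$ is unbalanced, this means in particular that no vertex of degree $1$ lies on the circumference. A boundary walk can only backtrack along an edge at a vertex where it is forced to reverse direction, i.e.\ at a vertex of degree $1$; hence under our assumption the circumference performs no backtracks. As already noted after the definition of the circumference, a boundary walk never crosses itself transversally, so the only remaining self-intersections of the circumference are non-transversal crossings in the sense of Definition \ref{def:nontransversal}. Therefore the circumference is an essentially simple closed path, and by the orientation convention built into its definition it is counterclockwise.

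Now I would apply Gau\ss-Bonnet (Lemma \ref{thm:gaussbonnetessential}): for any edge $e$ on the circumference, the turn angle from $e$ to $e$ along the circumference equals $+360\degrees$. On the other hand, by Lemma \ref{thm:outeranglecircumference} the turn angle at each (balanced) vertex of the circumference is strictly negative, so the total turn angle from $e$ back to $e$, being the sum of these vertex turn angles, is strictly negative. (Here one also uses that the circumference actually has at least one vertex: a vertex-free circumference would be a closed geodesic, and the flat plane carries none; if $G$ is a single point then that vertex has degree $0$, is unbalanced, and the statement holds trivially.) A strictly negative number cannot equal $+360\degrees$, so this is the desired contradiction, and it shows that the circumference must contain an unbalanced vertex.

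The step requiring the most care is the claim that, in the absence of degree $1$ vertices, the circumference is essentially simple — in particular that no inadmissible or ``double'' backtrack can occur along it. This is intuitively clear from the picture: the outer face lies to the right of the circumference everywhere, so any edge that the boundary walk retraces is one both of whose sides belong to the outer face, and between its two traversals the walk runs through a nontrivial portion of $G$ rather than reversing immediately; the only immediate reversals happen at degree $1$ tips, which we have excluded. Making this precise is the one point where the argument goes beyond a direct quotation of Lemmas \ref{thm:circumferencefirstrightturn}, \ref{thm:outeranglecircumference} and \ref{thm:gaussbonnetessential}.
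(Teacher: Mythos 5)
Your proposal is correct and takes essentially the same route as the paper: assume all vertices on the circumference are balanced, apply Lemma \ref{thm:outeranglecircumference} to make every turn angle negative, and contradict the fact that the circumference is a closed counterclockwise path. The paper phrases the final contradiction slightly more informally (a closed path with negative total turn angle would be clockwise, so the unbounded component to its right would be enclosed) instead of citing Lemma \ref{thm:gaussbonnetessential}, but the idea is identical, and your additional verification that no backtracks or transversal crossings occur is consistent with the paper's own remarks about boundary walks.
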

\end{samepage}

\begin{proof}
	Otherwise the circumference would only visit balanced vertices. By the previous lemma, the turn angle at each vertex would therefore be negative. This implies that the turn angle along the whole circumference is negative. We now have a closed clockwise path such that the unbounded component of $\mathbb{R}\setminus G$ lies to the right of it and therefore inside a curve. This is a contradiction.
\end{proof}

%%%%%%%%%%%%%%%%%%%%%%%%%%%%%%%%%%%%%
\section{Proof for $f(3)=1$ on the flat plane}
%%%%%%%%%%%%%%%%%%%%%%%%%%%%%%%%%%%%%
\label{sect:proof3}

We can now prove the main result of Theorem \ref{thm:main} for the case of a geodesic net on the Euclidean plane. We will cover the case of nonpositive curvature in section \ref{sect:negative}.

\begin{lemma}\label{thm:threevertices}
	A geodesic net on the Euclidean plane with three unbalanced vertices has at most one balanced vertex.
\end{lemma}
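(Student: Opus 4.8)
The plan is to assume, for contradiction, that a geodesic net $G$ with three unbalanced vertices $P_1, P_2, P_3$ has two (or more) balanced vertices, and to derive a contradiction by carefully tracking turn angles along a well-chosen closed path. By Lemma \ref{thm:convexhull}, all balanced vertices lie in the interior of the triangle $T$ with vertices $P_1, P_2, P_3$. First I would observe that the circumference of $G$ must visit all three $P_i$ (it visits unbalanced vertices, and with only three of them the structure is constrained); between consecutive unbalanced vertices on the circumference there are arcs consisting only of balanced vertices, at each of which (by Lemma \ref{thm:outeranglecircumference}, via Lemma \ref{thm:circumferencefirstrightturn} and the First Turn Lemma \ref{thm:firsturn}) the turn angle is strictly negative. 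Gau\ss-Bonnet for the circumference (Lemma \ref{thm:gaussbonnetessential}) says all turn angles sum to $360\degrees$, so the three exterior angles at $P_1, P_2, P_3$ must absorb more than $360\degrees$; this already forces the $P_i$ to be genuine ``corners'' and pins down the rough shape of the circumference.

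The heart of the argument is to bound the total turn contributed along each boundary arc between two consecutive $P_i$'s using the $60\degrees$ Lemma \ref{thm:60degrees}. The idea: if there are at least two balanced vertices, then at least one boundary arc contains a balanced vertex which is ``deep'' inside $T$, and one can find a configuration of four edges $a, u, b, v, c, w, d$ with $u, w$ balanced and the convex hull of $u, v, w$ free of unbalanced vertices (the latter is automatic since the only unbalanced vertices are the corners of $T$ and $u, v, w$ are interior). Applying Lemma \ref{thm:60degrees} repeatedly (chaining it along a path through the balanced vertices in the interior) gives that the total turn angle accumulated going ``across'' the net from near one corner to near another is at most $60\degrees$, hence quite small. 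But the circumference, going the ``outside'' way around, must turn by $360\degrees$ minus the small negative contributions of the balanced vertices on the circumference — so the three corner angles, plus the small interior turn budgets, cannot be reconciled. More concretely, I would build an essentially simple closed path that goes along the circumference from $P_1$ to $P_2$, then cuts back through the interior from near $P_2$ to near $P_1$ using a path of balanced vertices; conditional path-independence (Lemma \ref{thm:condpathind}) plus the $60\degrees$ bound on the interior portion, combined with Gau\ss-Bonnet on the closed loop, should force the circumference's $P_1$-to-$P_2$ turn to be bounded, and summing the three such bounds contradicts the total of $360\degrees$.

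The main obstacle I anticipate is verifying that the various paths constructed are genuinely essentially simple — i.e., that the only failures of simplicity are admissible backtracks and non-transversal crossings — so that Gau\ss-Bonnet and conditional path-independence actually apply. In particular, when a balanced vertex has high degree, the ``cut-back'' path through the interior might be forced to wind in complicated ways, and one must ensure it does not cross the circumference transversally or create inadmissible double-backtracks; this is exactly the kind of bookkeeping that the careful setup of Sections \ref{sect:global} was designed to handle, but checking it in the three-corner configuration is where the real work lies. A secondary subtlety is the degenerate sub-case where the circumference uses a corner $P_i$ more than once, or where two of the $P_i$ are joined directly by an edge on the circumference with no balanced vertices between them; these need separate (easier) treatment, likely by a direct angle-sum argument at the corners combined with Lemma \ref{thm:combinedangle} to bound combined angles at the at-most-one deep balanced vertex. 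I would expect the write-up to split into a handful of cases according to how many balanced vertices lie on the circumference versus strictly interior, with the ``all balanced vertices on the circumference'' case handled almost immediately by Gau\ss-Bonnet and the harder case requiring the $60\degrees$ Lemma machinery described above.
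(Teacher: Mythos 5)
You have assembled the right toolkit (circumference, negativity of turns at balanced vertices on it, Gau\ss-Bonnet for essentially simple paths, the $60\degrees$ Lemma, conditional path independence), but the step that actually makes the argument work is missing: you never control the turn angles \emph{at the three unbalanced corners}, and that is the only place where the $60\degrees$ Lemma has any bite here. On the circumference every balanced vertex is a first right turn, so its turn angle is already negative by Lemma \ref{thm:firsturn}; applying the $60\degrees$ Lemma to triples $u,v,w$ that are all interior (as you propose when you say the convex-hull hypothesis is \enquote{automatic since $u,v,w$ are interior}) gives nothing beyond that. The dangerous contributions are the turns at $P_1,P_2,P_3$, each of which can be as large as $180\degrees$; three such turns plus negative turns at balanced vertices are perfectly compatible with a total of $360\degrees$ --- the Fermat-point net itself realizes exactly $3\cdot 180\degrees-3\cdot 60\degrees=360\degrees$ --- so your plan of \enquote{summing the three bounds to contradict $360\degrees$} cannot close without a bound that couples each corner to its neighbours. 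The paper's proof does exactly this: it applies the $60\degrees$ Lemma with the \emph{unbalanced corner} in the role of the middle vertex $v$ (the lemma explicitly allows $v$ to be unbalanced), taking $u$ and $w$ to be the balanced circumference vertices immediately before and after that corner; this yields that the sum of the turn angles at the triple (last balanced vertex before the corner, the corner, first balanced vertex after it) is at most $60\degrees$, and similarly at the other two corners.

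A second problem is that your \enquote{chaining} of the $60\degrees$ Lemma along an interior path, and the closed loop cutting back through the interior, is not justified as stated: the lemma's conclusion concerns one specific four-edge configuration and does not iterate to give a $60\degrees$ bound on a long traverse from one corner to another. Once the corner-centred bounds are in place no such construction is needed: Gau\ss-Bonnet on the essentially simple circumference gives total turn $360\degrees$; if some arc between consecutive corners contained more than one balanced vertex, two of the corner-centred bounds would involve disjoint sets of vertices and the total turn would be at most $60\degrees+60\degrees+180\degrees<360\degrees$, a contradiction; hence each arc carries exactly one balanced vertex, and adding the three inequalities then forces every corner turn to equal $180\degrees$ and the three balanced vertices to coincide, i.e.\ the net is the Fermat tree with a single balanced vertex. (Your anticipated preliminary reductions --- removing edges joining two unbalanced vertices and ruling out a corner visited twice by the circumference via a disconnection argument --- are indeed needed and match the paper.)
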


\begin{proof}
	First, if the geodesic net $G$ includes any edges that start \textit{and} end at an unbalanced vertex, we remove them. This leads to two cases:
	\begin{enumerate*}
		\item We get a geodesic net $G'$ with fewer unbalanced vertices (since removing an incident edge might balance a previously unbalanced vertex). So $G'$ has one or two unbalanced vertices and therefore no balanced vertices as demonstrated previously. It follows that  $G$ also has no balanced vertices and we are done.
		\item We get a geodesic net $G'$ with the same number of unbalanced and balanced vertices as $G$. In this case, it suffices to study $G'$.
	\end{enumerate*}
	We can therefore assume that no such \enquote{irrelevant edges} exist.
	
	We can assume that the three unbalanced vertices $x,y,z$ are not collinear (otherwise the interior of their convex hull is empty and by Lemma \ref{thm:convexhull}, there are no balanced vertices). This implies that $x,y,z$ are arranged on a triangle.
	
	All balanced vertices must lie inside the triangle formed by these three points.
	
	\textbf{Claim:} We can assume that removing any one of $x,y,z$ must not disconnect $G$.\\
	If, say, removing $x$ would disconnect the geodesic net, consider the following process: Remove $x$, splitting $G$ into at least two connected components and add a copy of $x$ to each of them. Each of the resulting components must contain at least two unbalanced vertices (otherwise the component would have no vertices besides $x$ which isn't possible). But there are only three unbalanced vertices in total and at least two components. Therefore, each component has at most two unbalanced vertices and therefore no balanced vertices. We deduce that there is a total of zero balanced vertices.
	
	Denote the circumference of the geodesic net by the path $\gamma$.
	
	\textbf{Claim:} $\gamma$ travels through each of $x,y,z$ exactly once.\\
	First note that the circumference must reach each of the three at least once, since the geodesic net is connected and $x,y,z$ are on the boundary of the geodesic net. If it travelled through, say, $x$ twice, removing $x$ would split up $G$, a case we just excluded.
	
	This implies that $\gamma$ either visits the three unbalanced vertices in the order $x,y,z$ or in the order $x,z,y$. After relabeling if necessary, we assume the order is $x,y,z$.
	
	Therefore, the circumference travels along vertices in the following order.
	\begin{align*}
		x,u_1,\dots,u_r,y,v_1,\dots,v_s,z,w_1,\dots,w_t,x
	\end{align*}
	where all the $u_i,v_j,w_k$ are balanced vertices.
	
 	First note that on the circumference, two unbalanced vertices never follow directly because we would have an \enquote{irrelevant edge}. This implies $r,s,t\geq 1$. We will argue that $r=s=t=1$.
 	
 	Consider the neighbourhood of the vertex $y$ in the following sense (compare figure \ref{fig:labelthehex} to figure \ref{fig:60degrees}):
	\begin{itemize*}
		\item The edge entering $u_r$ is called $a$.
		\item $u_r$ is called $u$
		\item The edge from $u_r$ to $y$ is called $b$.
		\item $y$ is called $v$.
		\item The edge from $y$ to $v_1$ is called $c$.
		\item $v_1$ is called $w$.
		\item The edge leaving $v_1$ is called $d$.
	\end{itemize*}
	
	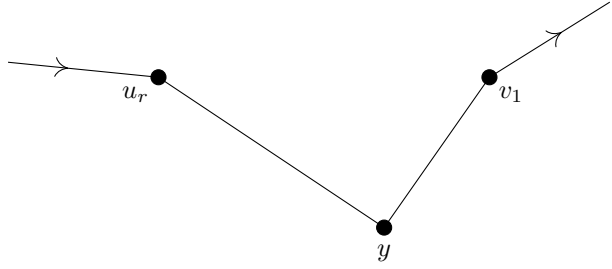
\begin{figure}
	\centering
	\begin{tikzpicture}[scale=2]
		\draw [decoration={markings,mark=at position 2/5 with {\arrow{>[scale=2.5,
	          length=2,
	          width=3]}}},postaction={decorate}] (-2.5,1.1) -- node [above=0.1cm] {} (-1.5,1) node (u) {};
	    \draw (u.center) -- node [below left] {} (0,0) node (v) {} -- node [below right] {} (0.7,1) node (w) {};
	    \draw [decoration={markings,mark=at position 3/5 with {\arrow{>[scale=2.5,
	          length=2,
	          width=3]}}},postaction={decorate}] (w.center) -- node [above=0.1cm] {} (1.5,1.5);
		\draw [fill=black] (u) circle [radius=0.05] node [below left] {$u_r$};
		\draw [fill=black] (v) circle [radius=0.05] node [below=0.1cm] {$y$};
		\draw [fill=black] (w) circle [radius=0.05] node [below right] {$v_1$};
	\end{tikzpicture}
	\caption{The circumference around the vertex $y$. Compare this figure to figure \ref{fig:60degrees}}
	\label{fig:labelthehex}
	\end{figure}

	Since there are only three unbalanced vertices, no unbalanced vertices are inside the convex hull of $u,v,w$.
	
	We now have the setup of the $60\degrees$ Lemma \ref{thm:60degrees} and can conclude that the  turn angle from $a$ to $d$ is at most $60\degrees$. In other words: The sum of the turn angles at $u_r$, $y$ and $v_1$ is at most $60\degrees$. We will abuse notation and use the names for the vertices also for the turn angles of the circumference at these vertices. So we can write:
	\begin{align}\label{eq:y}
		u_r+y+v_1\leq 60\degrees
	\end{align}	
	By analogous arguments around $x$ and $z$ we get
	\begin{align}
		w_t+x+u_1\leq 60\degrees\label{eq:x}\\
		v_s+z+w_1\leq 60\degrees\label{eq:z}
	\end{align}
	Note that the circumference is essentially simple as specified by definition \ref{def:essentiallysimple}:
	\begin{itemize*}
		\item Backtracking could only happen while visiting one of the unbalanced vertices, since at balanced vertices, the circumference always takes the first right turn (see Lemma \ref{thm:circumferencefirstrightturn}). If backtracking happened at, say, $x$, note that then $w_t$ and $u_1$ would be the same balanced vertex. $\gamma$ takes the first right turn both before and after visiting $x$, so double backtracking would imply that $u_1=w_t$ has degree $2$ which is impossible for a balanced vertex. Also, since we do right turns right before and after $x$, the edge to/from $x$ must lie to the right of the remainder of the circumference. We conclude that if backtracking happens, it is admissible.
		\item Since the circumference is a boundary walk, it can never cut off any edges, but this would be necessary for a transversal crossing.
	\end{itemize*}
	Since the circumference is also closed and the outside always lies to the right of it by definition, it is counterclockwise according to Definition \ref{def:counterclockwise} and Gau\ss-Bonnet as described in Lemma \ref{thm:gaussbonnetessential} applies:
	\begin{align}\label{eq:closed}
		x+y+z+\sum u_i+\sum v_j+\sum w_k=360\degrees
	\end{align}
	Now assume that $r>1$ and therefore $u_1$ and $u_r$ are indeed separate angles. Note that the turn angle at balanced vertices on the circumference must be negative and rewrite equation \eqref{eq:closed} to:
	\begin{align*}
		\underbrace{w_t+x+u_1}_{\leq 60\degrees}+\underbrace{u_r+y+v_1}_{\leq 60\degrees}+\underbrace{z}_{\leq 180\degrees}+\underbrace{\sum_{i\neq1,r} u_i+\sum_{j\neq1} v_j+\sum_{k\neq t} w_k}_{\leq 0}=360\degrees
	\end{align*}
	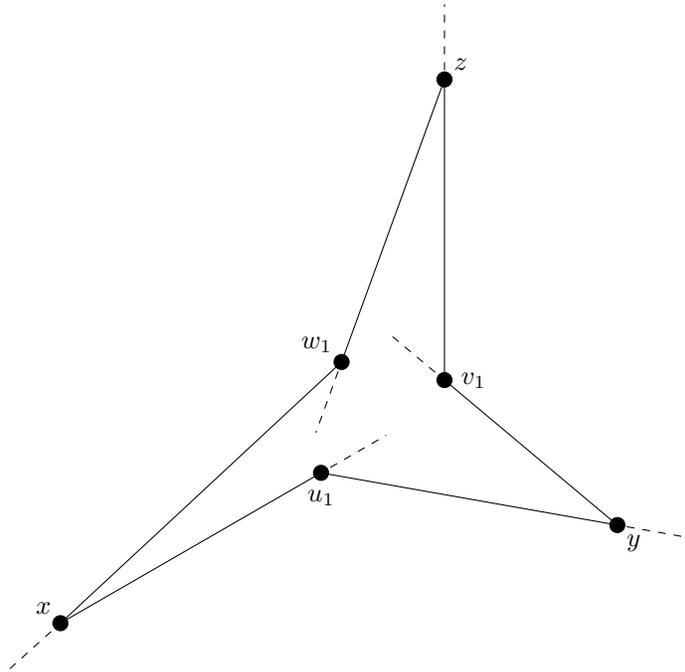
\begin{figure}
	\centering
	\begin{tikzpicture}[scale=2]
		\draw (0,0) node (x) {} -- ++(30:2) node (u) {} -- ++(-10:2) node (y) {} -- ++ (140:1.5) node (v) {}
		-- ++ (90:2) node (z) {} -- ++ (250:2) node (w) {} -- cycle;
		\draw [fill=black] (x) circle [radius=0.05];
		\draw [fill=black] (u) circle [radius=0.05];
		\draw [fill=black] (y) circle [radius=0.05];
		\draw [fill=black] (v) circle [radius=0.05];
		\draw [fill=black] (z) circle [radius=0.05];
		\draw [fill=black] (w) circle [radius=0.05];
		\node at (x) [above left] {$x$};
		\node at (u) [below=0.1cm] {$u_1$};
		\node at (y) [below right] {$y$};
		\node at (v) [right=0.1cm] {$v_1$};
		\node at (z) [above right] {$z$};
		\node at (w) [above left] {$w_1$};
		\draw [dashed] (x) -- ++(222:0.5);
		\draw [dashed] (u) -- ++(30:0.5);
		\draw [dashed] (y) -- ++(-10:0.5);
		\draw [dashed] (v) -- ++(140:0.5);
		\draw [dashed] (z) -- ++(90:0.5);
		\draw [dashed] (w) -- ++(250:0.5);
	\end{tikzpicture}
	\caption{The circumference of the geodesic net after we have established that $r=s=t=1$. The dashed lines are the lines of reference for the turn angles.}
	\label{fig:thehex}
	\end{figure}
	This is a contradiction. It follows that $r=1$. By analogous arguments $s=t=1$ and we have in fact the situation shown in figure \ref{fig:thehex} . This means that the three inequalities \eqref{eq:y}, \eqref{eq:x}, \eqref{eq:z} can be rewritten as:
	\begin{align*}
		u_1+y+v_1\leq 60\degrees\qquad
		w_1+x+u_1\leq 60\degrees\qquad
		v_1+z+w_1\leq 60\degrees
	\end{align*}
	Adding up and rearranging, we get
	\begin{align*}
		x+y+z+u_1+v_1+w_1\leq 180\degrees-(u_1+v_1+w_1)
	\end{align*}
	But because of \eqref{eq:closed}, we get
	\begin{align*}
		360\degrees=x+y+z+u_1+v_1+w_1\leq 180\degrees-(u_1+v_1+w_1)
	\end{align*}
	We see that $u_1+v_1+w_1\leq-180\degrees$. But then also $x+y+z\geq540\degrees$. Since each of $x,y,z$ can be at most $180\degrees$, it follows that
	\begin{align*}
		x=y=z=180\degrees
	\end{align*}
	That implies $u_1=v_1=w_1$ and therefore $G$ is just a tree with three unbalanced vertices and one degree three balanced vertex in the centre. So in fact, this tree is the only possible geodesic net with three unbalanced vertices that includes any balanced vertices.
\end{proof}

\vspace{-5mm}

\section{The case of nonpositive curvature}
\label{sect:negative}

In this section, we will establish that the main theorem \ref{thm:main} holds true for nonpositive curvature on $\mathbb{R}^2$ as well:

First note that all local results of section \ref{sect:local}, in particular the Special Combined Angle Lemma \ref{thm:specialcombinedangle}, apply without alteration on surfaces of any curvature. The global results of section \ref{sect:global} on the other hand necessitate a closer look.

Note that the convex hull of a finite number of points in a plane of nonpositive curvature is still the geodesic polygon with vertices at some of these points. This can be shown using the fact that in a simply connected Riemannian manifold of nonpositive curvature, the exponential map at any point is a diffeomorphism (see theorem 2.6.6 in \cite{MR666697}). Therefore Lemma \ref{thm:convexhull} and \ref{thm:atleastthree} still apply (where, in the proofs, one replaces straight lines with geodesics). Also, the definition and results regarding the circumference still work without alteration. It is worth noting how we use that we are dealing with a metric on $\mathbb{R}^2$ and not an arbitrary surface: We used that the surface is simply connected and the definition of the circumference relies on the notion of an outer face, which uses that $\mathbb{R}^2\setminus G$ has exactly one unbounded component. Either of these arguments would fail on, say, the flat torus.

Our version of Gau\ss-Bonnet for essentially simple curves as given by Lemma \ref{thm:gaussbonnetessential} still applies with the following adjustment: Since we have nonpositive curvature, the turn angle along such a counterclockwise closed path now is $360\degrees$ \textit{or more}. Unlike in flat geometry, we won't have conditional path independence as described by Lemma \ref{thm:condpathind}. We will see below that we can do without that fact. Note that the First and Second Turn Lemma \ref{thm:firsturn}/\ref{thm:secondturn} again describe local properties and therefore still apply.

We will now prove a generalized version of the flat $60\degree$-Lemma \ref{thm:60degrees} before we prove the main result for nonpositive curvature. The proofs will closely follow the ideas of the flat case.

\begin{lemma}[$60\degrees$ Lemma, nonpositive curvature]\label{thm:60degreesnegative}
	Consider a path going through four edges $a,b,c,d$ of a geodesic net on $\mathbb{R}^2$ with a metric of nonpositive curvature and three vertices $u,v,w$ in the order $a,u,b,v,c,w,d$. Assume that
	\begin{enumerate*}
		\item $a\neq -b$, $c\neq -d$
		\item $u$ and $w$ are balanced ($v$ can be balanced or not).
		\item $b$ immediately follows $a$ at $u$ (i.e. $a*b$ takes the first right turn).
		\item $d$ immediately follows $c$ at $w$ (i.e. $c*d$ takes the first right turn).
		\item The convex hull of $u,v,w$ contains no unbalanced vertices (except, possibly, $v$ itself).
	\end{enumerate*}
	Then there exists a piecewise geodesic path $\gamma$ contained in the convex hull of $u$, $v$, $w$ with the following properties:
	\begin{itemize*}
		\item $\gamma$ starts at $u$ and ends at $w$.
		\item The turn angle along $a*\gamma*d$ is $60\degrees$ or less.
		\item $\gamma$ is simple apart from possible admissible backtracks.
	\end{itemize*}
\end{lemma}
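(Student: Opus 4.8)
The plan is to rebuild the flat proof of Lemma \ref{thm:60degrees} essentially verbatim, tracking where the flat argument actually used conditional path independence (which fails in nonpositive curvature) and noting that the conclusion has been weakened precisely so that this failure no longer matters. In the flat case, the conclusion was a numerical bound on the turn angle along $a*b*c*d$, obtained by constructing an auxiliary path $\gamma$ inside the convex hull of $u,v,w$ and then invoking conditional path independence to transfer the turn-angle bound from $a*\gamma*d$ back to $a*b*c*d$. Here we simply stop after constructing $\gamma$ and proving the bound for $a*\gamma*d$; no transfer is needed, so no Gau{\ss}--Bonnet-in-the-plane / path-independence input is used. What remains is to verify that (i) the construction of $\gamma$ goes through unchanged, (ii) the turn-angle estimate for $a*\gamma*d$ goes through unchanged, and (iii) $\gamma$ satisfies the stated simplicity conclusion.

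First I would dispose of the trivial cases exactly as before. If $u=w$ then $b=-c$ and the combined angle of $b$ at $u$ is at most $240\degrees$ by Lemma \ref{thm:combinedangle}; taking $\gamma$ to be a single point (or a trivial path) at $u=w$, the turn angle along $a*\gamma*d$ is the combined angle minus $180\degrees\le 60\degrees$. Otherwise, and also when the turn angle from $b$ to $c$ at $v$ is nonpositive, one checks the conclusion is immediate (in the latter subcase one may take $\gamma=b*c$ itself: the turn angles at $u$ and $w$ are negative by the First Turn Lemma \ref{thm:firsturn}, and $b*c$ is simple since $b\neq -c$). So assume $u\neq w$ and the turn from $b$ to $c$ is positive.

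Next I construct $\gamma$ exactly as in the flat proof: start at $u$, take the second right turn counted from $a$, then always take the first right turn unless that edge leads to $v$, in which case take the second right turn; terminate upon reaching $w$ or upon leaving the convex hull of $u,v,w$. Well-definedness and the no-backtracking property follow from condition (e) together with Lemma \ref{thm:straightline} exactly as before — these arguments are purely local/combinatorial and are curvature-insensitive. Termination is the same ``rightmost ray from $v$'' argument, which only uses that geodesic rays from a point in a simply connected nonpositively curved plane behave like rays (the exponential map is a diffeomorphism, as already invoked in the section's preamble via \cite{MR666697}); so a closed polygon of balanced vertices inside the convex hull cannot occur, and $\gamma$ reaches $w$ or exits the hull. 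In the ``exits the hull'' case one runs the mirror construction $\epsilon$ from $w$ as in Case 2a, and in the pathological Cases 2b/2c one truncates $\gamma$ at its last crossing of the segment $\overline{\ell}$ and appends a sub-segment of $\overline{\ell}$ — but note $\overline{\ell}$, the geodesic segment between $u$ and $w$, lies in the convex hull of $u,v,w$ in nonpositive curvature as well, so this stays inside the hull; in these subcases the contradiction-style argument of the flat proof is replaced by directly exhibiting the final $\gamma$ (the truncated-and-rerouted path) and observing that its turn angle is $\le 60\degrees$ by the same accounting. The turn-angle estimate itself is the flat one: let $x$ be the first edge of $a*\gamma*d$ with positive turn angle and $y$ the last; by the First Turn Lemma both forced to be second right turns, hence both $x$ and $y$ are adjacent to $v$; reroute through $v$ to form $\gamma':=\gamma(\rightarrow x)*e_v*f_v*\gamma(y\rightarrow)$, which introduces at most one admissible backtrack (when $e_v=-f_v$, and it is admissible because $e_v,f_v$ lie to the right of the remainder by the first-right-turn rule); then by the Second Turn Lemma \ref{thm:secondturn} the turn at $x$ and the turn at $y$ each lie in $(-120\degrees,-60\degrees]$, the turn at $v$ is at most $180\degrees$ by Lemma \ref{thm:lessthan180} (for $v$ balanced) or trivially $\le 180\degrees$ (for $v$ unbalanced, an honest geodesic vertex), and all other turns are negative, so the total is $\le -60\degrees+180\degrees-60\degrees=60\degrees$. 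Finally $\gamma$ (taken to be $\gamma'$ in the end, since it is $\gamma'$ whose turn angle we bounded) is simple apart from that single admissible backtrack, because $\gamma$ never revisits a vertex and the reroute only overwrites the stretch between $x$ and $y$ with two edges to/from $v$.

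The main obstacle, and the reason the lemma statement is weakened relative to the flat version, is precisely the loss of conditional path independence: in the flat proof several steps silently replaced $a*b*c*d$ by $a*\gamma*d$ or by $a*\overline{\ell}*d$ and argued the turn angles agree. Here I expect the real work to be in the Case 2b/2c bookkeeping: without path independence I cannot ``transfer'' a contradiction back to $a*b*c*d$, so instead of the flat proof's proof-by-contradiction I must produce, in every subcase, an \emph{explicit} admissible $\gamma$ inside the convex hull whose turn angle along $a*\gamma*d$ is directly bounded by $60\degrees$ via the First/Second Turn Lemmas — i.e. reorganize the flat Case 2 so that it outputs a witness path rather than deriving a contradiction. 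Everything else (well-definedness, termination, the $60\degrees$ accounting, admissibility of the single backtrack) is curvature-robust because it is either local to a single tangent space or relies only on the convexity of geodesic hulls and the diffeomorphism property of $\exp$ in nonpositive curvature.
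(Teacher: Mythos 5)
Your overall plan coincides with the paper's: keep the flat construction of $\gamma$ (second right turn at $u$, then first right turns except to avoid $v$), handle Case 1 and the pathological Cases 2b/2c by truncating along $\overline{\ell}$ and rerouting through $v$, and exploit the weakened conclusion so that no transfer back to $a*b*c*d$ is needed. Cases 1, 2b, 2c as you describe them do match the paper's argument.

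However, there is a genuine gap in your treatment of Case 2a (both $a$ and $d$ above $\ell$), and your blanket claim that \emph{no} Gau\ss--Bonnet input is used is false for the paper's proof. In Case 2a the rerouted path $u\to x\to v\to y\to w$ only exists if one can produce a vertex $x$ on $\gamma$ (and, mirror-wise, $y$ on the path $\epsilon$ from $w$) at which the turn angle is \emph{positive}; only then does the First Turn Lemma force a second right turn there, hence adjacency to $v$, hence the edges $e_v,f_v$ through which you reroute. In the flat proof this existence was obtained by a global direction comparison (``$a$ lies above $\ell$ and $\gamma$ exits the hull, so some turn must be positive''), which is exactly the kind of flat-plane path-independence/holonomy argument that you yourself note is unavailable in nonpositive curvature; it is not ``local to a single tangent space'' nor a consequence of convexity of hulls. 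The paper replaces it by a curvature-specific argument: if the turn at $u$ is not already positive, close up the initial portion of $\gamma$ (up to its exit point through $\overline{\ell}$) with a segment of $\overline{\ell}$ back to $u$; this is a simple closed counterclockwise piecewise-geodesic loop, so by the nonpositive-curvature version of Gau\ss--Bonnet its total turn is $360\degrees$ or more, and since the two junction turns are at most $180\degrees$ each, some vertex of $\gamma$ not on $\overline{\ell}$ has positive turn — that vertex is $x$ (and similarly for $y$ via $\epsilon$). Note also that you cannot salvage Case 2a by the Case 2b device of appending $\overline{\ell}$ and doing pure accounting: that accounting used that $d$ lies on or below $\ell$ so the final turn onto $d$ is nonpositive, which fails precisely in Case 2a. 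So your proposal needs this additional reverse Gau\ss--Bonnet step (or an equivalent existence argument) to be complete.
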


Note that we do not require the path to be on the geodesic net. The important difference between the flat version and the version for nonpositive curvature is that we are merely looking for \emph{some} path along which the turn angle is $60\degrees$ or less. This is how we deal with the absence of conditional path independence.

\begin{proof}
	We can again exclude the two trivial cases where $u=w$ or where the turn angle from $b$ to $c$ is nonpositive. In those cases, the path $\gamma=b*c$ fulfils the required properties.
	
	We arrive at a picture similar to the one in figure \ref{fig:60degrees} with a geodesic triangle bounded by $b$, $c$ where $\ell$ is the unique geodesic going through $u$ and $w$. As before, $a$ or $d$ could also be below $\ell$ (for a consideration of what \enquote{below} means in this case, see Case 2).

	We define a path $\gamma$ as before: start at $u$. The first edge of $\gamma$ is given by the second right turn at $u$, counting from $a$. Then $\gamma$ always takes the first right turn (unless it leads to $v$, in which case it takes the second right turn) and terminates at $w$ or once we leave the convex hull. The argument that $\gamma$ is well-defined still applies. For clarification, in the argument that the path terminates, the \enquote{rays} from $v$ would now be the geodesic rays starting at $v$ and sweeping out the convex hull/geodesic triangle of $u$, $v$ and $w$.
	
	We again work on the following cases:
	
	\textbf{Case 1 $\gamma$ reaches $w$.} Note that by the same arguments as in the flat case, $\gamma$ fulfills all properties required of a path according to the lemma, except possibly the turn angle. If the turn angle along $a*\gamma*d$ is nonpositive, we are therefore done.
	
	Otherwise, at some point the turn angle at a vertex on $a*\gamma*d$ must be positive. Denote by $x$ the first edge with a positive turn angle and by $y$ the last edge with a positive turn angle. Follow the same construction as in the flat case and arrive at a path $a*\gamma(\rightarrow x)* e_v* f_v* \gamma(y\rightarrow)*d$. Note that now $\gamma':=\gamma(\rightarrow x)* e_v* f_v* \gamma(y\rightarrow)$  fulfills all requirements of the lemma, including the turn angle along $a*\gamma'*d$ which is, as argued before, at most $-60\degrees+180\degrees-60\degrees=60\degrees$. The argument that $\gamma'$ is simple apart from one possible backtrack is the same as in the flat case.

	\textbf{Case 2 $\gamma$ leaves the convex hull of $u,v,w$ and terminates.}
	
	Note that the terms of $a$ or $d$ lying \enquote{below $\ell$} or \enquote{above $\ell$} that we use in the following are to be understood in the sense that their tangent vectors at the vertex lie below or above the tangent vector of $\ell$ in the tangent space at the respective vertex, which is divided into two half planes by $\ell$ where the lower half plane $u$ is the one including the tangent vector of $b$ and the lower half plane at $w$ is the one including the tangent vector of $c$. Denote by $\overline{\ell}$ the segment of $\ell$ between $u$ and $w$.
	
	\textbf{Case 2a both $a$ and $d$ lie above $\ell$.}	We find our vertex $x$ on the path $\gamma$ as follows: If the turn angle from $a$ to the first edge of $\gamma$ is positive (i.e. the turn angle at $u$ is positive), then we choose $x:=u$. Otherwise we must enter the interior of the convex hull of $u$, $v$ and $w$ and can consider the following path: Start at $u$, follow $\gamma$ until it would leave the convex hull (at which point it crosses $\overline{\ell}$) and then return to $u$ along $\overline{\ell}$. This is a simple, closed, counterclockwise path. The total turn angle along this path must be $360\degrees$ or more (since it encloses a region of nonpositive curvature) and has three or more vertices. In particular, one of the vertices not incident to $\overline{\ell}$ must have a positive turn angle. Starting at $u$ going counterclockwise, the first such vertex will be denoted by $x$. The vertex $y$ can be found on a path starting with the second left turn at $d$ and then always taking the first left turn, except if it leads to $v$ (as before, this is the mirror image of the process of finding $\gamma$ an $x$) and we arrive at the familiar situation where we can define a path $u\to x\to v\to y\to w$ fulfilling the required properties.
	
	\textbf{Case 2b $d$ lies on or below $\ell$.} As before, this case is rather pathological.
	
	Consider the following path which we call $\gamma_+$: 	Truncate the path $\gamma$ so that it terminates at the point where it intersects $\overline{\ell}$ for the last time. This means it most likely won't terminate on a vertex but in the middle of an edge. Now continue along $\overline{\ell}$ to $w$.
	
	If the turn angle along $a*\gamma_+*d$ is $60\degrees$ or less, we are done (the argument for $\gamma_+$ being essentially simple is the same as previously). Otherwise note: The total turn angle along $a*\gamma_+*d$ must be more than $60\degrees$, the last turn of $\gamma+$ onto $\overline{\ell}$ must be nonpositive (since the last edge of $\gamma$ is on or below $\overline{\ell}$) and the turn from $\gamma_+$ onto $d$ is nonpositive (since $d$ lies on or below $\ell$). So there must be at least one vertex along $a*\gamma_+$ with a positive turn angle and this is in fact a turn between two edges of the geodesic net. Call the first such vertex $x$ and the last such vertex $y$. As usual, we allow $x$ and $y$ to coincide. Define $e_v$ and $f_v$ as before and we get a path $\gamma':=\gamma_+(\rightarrow x)* e_v* f_v* \gamma_+(y\rightarrow)$ such that the turn angle along $a*\gamma'*d$ can be at most $60\degrees$. Again the argument that $\gamma'$ is simple apart from one possible admissible backtrack remains the same. We now have the path we were looking for.
		
	\textbf{Case 2c $a$ lies on or below $\ell$}. This case is still the mirror image of Case 2c.
\end{proof}

We can now prove the general version of the main theorem \ref{thm:main}, i.e. the case of three boundary vertices and nonpositive curvature.

Again, we will closely follow the proof for the flat case, i.e. the proof of Lemma \ref{thm:threevertices}.

\begin{lemma}\label{thm:threeverticesnonpos}
	A geodesic net on $\mathbb{R}^2$ with a metric of nonpositive curvature with three unbalanced vertices has at most one balanced vertex.
\end{lemma}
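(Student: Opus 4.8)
The plan is to run the proof of Lemma~\ref{thm:threevertices} almost verbatim, replacing its two flat-specific ingredients --- conditional path independence (Lemma~\ref{thm:condpathind}) and the \emph{equality} in Gau\ss--Bonnet --- by the nonpositive-curvature $60\degrees$ Lemma~\ref{thm:60degreesnegative} and by the \emph{inequality} ``turn angle $\ge 360\degrees$'' that replaces Lemma~\ref{thm:gaussbonnetessential} in nonpositive curvature. First I would copy the curvature-insensitive reductions: delete every edge joining two unbalanced vertices (leaving either a net with fewer unbalanced vertices, handled by Lemma~\ref{thm:atleastthree}, or one with the same $B$ and $U$); assume $x,y,z$ are not collinear (otherwise $B=\varnothing$ by Lemma~\ref{thm:convexhull}); note that removing any one of $x,y,z$ cannot disconnect $G$, so the circumference visits each of $x,y,z$ exactly once, giving the cyclic list $x,u_1,\dots,u_r,y,v_1,\dots,v_s,z,w_1,\dots,w_t$ with all $u_i,v_j,w_k$ balanced and $r,s,t\ge 1$. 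The circumference is still essentially simple and counterclockwise, takes the first right turn at each balanced vertex (Lemma~\ref{thm:circumferencefirstrightturn}), hence has negative turn angle there (Lemma~\ref{thm:outeranglecircumference}), and now satisfies $x+y+z+\sum u_i+\sum v_j+\sum w_k\ge 360\degrees$.

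The new input enters at the three corners. Around $y$ I apply Lemma~\ref{thm:60degreesnegative} with the triple $u_r,y,v_1$ (condition (e) holds because there are only three unbalanced vertices) to get a piecewise geodesic path $\gamma_y$ inside the corner triangle, simple up to admissible backtracks, with turn angle of $a*\gamma_y*d$ at most $60\degrees$, where $a,d$ are the circumference edges flanking $y$; analogously $\gamma_x,\gamma_z$. Splicing these three paths into the circumference in place of the corner subpaths $w_t\!\to\! x\!\to\! u_1$, $u_r\!\to\! y\!\to\! v_1$, $v_s\!\to\! z\!\to\! w_1$ produces closed curves that visit no unbalanced vertex. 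Consider first the curve $\Gamma_{xy}$ obtained by replacing only the $x$- and $y$-corners: it is essentially simple and counterclockwise and bounds a nonpositively curved region, so Gau\ss--Bonnet gives turn angle $\ge 360\degrees$; on the other hand, since $\gamma_x$ and $\gamma_y$ are joined along the \emph{genuine} circumference arc $u_1,\dots,u_r$, the junction edges coincide with those appearing in the $60\degrees$-bounds and no correction term arises, so the turn angle of $\Gamma_{xy}$ equals $(\le 60\degrees\text{ at }x) + (\le 60\degrees\text{ at }y) + (\le 180\degrees\text{ at }z) + (\text{negative contributions at the remaining }u_i,v_j,w_k)$. If $r\ge 2$ this total is $\le 300\degrees<360\degrees$, a contradiction, so $r=1$; by the same argument with $\Gamma_{yz}$ and $\Gamma_{zx}$, also $s=t=1$.

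It remains to treat $r=s=t=1$, which I finish exactly as in the flat case but now using the closed curve $\Gamma=\gamma_x*\gamma_y*\gamma_z$ (which avoids $x,y,z$) alongside the circumference. Gau\ss--Bonnet gives $T(\Gamma)\ge 360\degrees$, and, since the region bounded by $\Gamma$ lies inside the one bounded by the circumference and the curvature is nonpositive, also $T(\Gamma)\le x+y+z+u_1+v_1+w_1$; feeding in the three $60\degrees$-bounds, the bounds $x,y,z\le 180\degrees$, and the circumference Gau\ss--Bonnet inequality forces equality throughout, so $x=y=z=180\degrees$. Then the circumference backtracks at each of $x,y,z$, forcing $u_1=v_1=w_1=:p$, and the circumference turn angle at each of the three visits to $p$ must be $-60\degrees$, which --- since the circumference takes the first right turn and visits $p$ only those three times --- forces $p$ to have degree $3$ with its edges meeting at $120\degrees$ and no further vertices to exist; hence $G$ is the Fermat-point tree of Figure~\ref{fig:threenet}.

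The hard part, as the passage before Lemma~\ref{thm:60degreesnegative} warns, is doing without conditional path independence. Concretely one must (i) check that each spliced curve is essentially simple and counterclockwise, so that Gau\ss--Bonnet applies --- this relies on the $\gamma$'s running along edges of the \emph{embedded} net inside the corner triangles, so that they meet the retained circumference arcs and one another only non-transversally --- and (ii) carry out the turn-angle bookkeeping at the spliced junctions. The delicate point in (ii) is the degenerate case $r=s=t=1$, where two corner paths share a vertex and the naive additivity $T(\Gamma)=T_x+T_y+T_z+(\text{interior turns})$ acquires $\pm 360\degrees$ correction terms that have to be pinned down --- the analogue of the flat proof's having to rule out $r>1$ before summing its inequalities --- and it is exactly these corrections that let the Fermat tree attain $T(\Gamma)=360\degrees$ instead of yielding a false contradiction.
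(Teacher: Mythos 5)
Your reductions and your argument that $r=s=t=1$ follow the paper's proof essentially verbatim: the paper, too, splices the two corner paths supplied by Lemma \ref{thm:60degreesnegative} into the circumference (keeping the third unbalanced vertex), checks essential simplicity, and contradicts the nonpositive-curvature Gau\ss-Bonnet inequality via $60\degrees+60\degrees+180\degrees=300\degrees<360\degrees$. The genuine gap is in your endgame for $r=s=t=1$. The paper's device there is the interleaved closed path $\beta$ that alternates between the circumference $\gamma$ and $\alpha:=\alpha_x*\alpha_y*\alpha_z$ (your $\Gamma$), together with the exact identity $\textrm{Turn}(\beta)=\textrm{Turn}(\gamma)+\textrm{Turn}(\alpha)$, proved by the local computation $b+c=a+d$ at each of $u_1,v_1,w_1$ (Figure \ref{fig:anglechange}); since $\beta$ decomposes into the three corner pieces of turn $\leq 60\degrees$ and the three turns $\leq 180\degrees$ at $x,y,z$, this gives $720\degrees\leq\textrm{Turn}(\gamma)+\textrm{Turn}(\alpha)=\textrm{Turn}(\beta)\leq 720\degrees$, and the forced equality yields $x=y=z=180\degrees$. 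You replace this with ``$T(\Gamma)\geq 360\degrees$, $T(\Gamma)\leq T(\gamma)$ by containment of the enclosed regions, and the $60\degrees$ and $180\degrees$ bounds force equality throughout,'' but that chain does not close. The $60\degrees$ bounds control the turn along $a*\alpha_y*d$ where $a,d$ are \emph{circumference} edges; they say nothing directly about the turns of $\Gamma$ at $u_1,v_1,w_1$, which are turns from the last edge of one corner path onto the first edge of the next. Without the junction identity, the inequalities you list are mutually consistent with, say, $T(\Gamma)=T(\gamma)=360\degrees$, $x=y=z=150\degrees$ and circumference turns $-30\degrees$ at $u_1,v_1,w_1$, with all corner bounds $\leq 60\degrees$ satisfied --- no contradiction and no forcing of $x=y=z=180\degrees$.

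Moreover, your auxiliary inequality $T(\Gamma)\leq T(\gamma)$ is itself unproven: it requires the region bounded by $\Gamma$ to lie inside the region bounded by the circumference, i.e.\ that the three corner triangles (whose third sides are the geodesics $u_1v_1$, $v_1w_1$, $w_1u_1$) sit inside a hexagon that is reflex at the balanced vertices, and you give no argument; and even granted, it cannot substitute for the missing identity. You do flag the junction bookkeeping as the delicate point, but you mis-describe its resolution: the point of the paper's Figure \ref{fig:anglechange} is that there are \emph{no} $\pm360\degrees$ corrections --- the two turns of $\beta$ at each junction sum exactly to the $\gamma$-turn plus the $\alpha$-turn --- and it is precisely this exact additivity, combined with Gau\ss-Bonnet applied separately to $\gamma$ and to $\alpha$, that drives the equality argument. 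A smaller issue in your step (i): the paths produced by Lemma \ref{thm:60degreesnegative} need not run along edges of the net (Case 2b of its proof uses a piece of the chord $\overline{\ell}$), so essential simplicity of the spliced curves cannot be justified the way you indicate; the paper argues it instead from the paths being simple up to admissible backtracks and confined to their corner triangles, away from the retained parts of $\gamma$.
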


\begin{proof}
	Call the three unbalanced vertices $x$, $y$ and $z$. The initial observations regarding the circumference still apply to the nonpositive curvature case, so we can reduce to the case where the circumference travels along vertices in the order
	\begin{align*}
		x,u_1,\dots,u_r,y,v_1,\dots,v_s,z,w_1,\dots,w_t,x
	\end{align*}
	Where all the $u_i,v_j,w_k$ are balanced vertices with $r,s,t\geq 1$. We will argue that $r=s=t=1$.
 	
 	Looking back at figure \ref{fig:labelthehex}, we have the setup of the $60\degree$-Lemma around the vertex $y$. We are using the non-flat version of the lemma this time and get a piecewise geodesic path inside the convex hull of $u_r$, $y$, $v_1$ starting at $u_r$ and ending at $v_1$ that is simple apart from admissible backtracks and such that the turn angle from the edge entering $u_r$ along this path to the edge leaving $v_1$ is at most $60\degrees$. Call this path $\alpha_y$. In a similar fashion, we get paths $\alpha_z$ and $\alpha_x$. Recall that the circumference is denoted by $\gamma$.
 	
 	Assume that $r>1$ which means we can define the following closed counterclockwise path:
 	\begin{align*}
 		\cdots\xrightarrow{\gamma} \underbrace{u_r\xrightarrow{\alpha_y} v_1}_{\leq 60\degrees}\xrightarrow{\gamma} v_s\xrightarrow{\gamma} \underbrace{z}_{\leq180\degrees}\xrightarrow{\gamma} w_1\xrightarrow{\gamma} \underbrace{w_t\xrightarrow{\alpha_x} u_1}_{\leq 60\degrees}\xrightarrow{\gamma}\cdots
 	\end{align*}
 	The angles given above are the turn angles along the respective parts of this path. It is important to point out that we assume $u_1\neq u_r$ which justifies that the two segments of $\leq60\degrees$ don't overlap.
 	
 	All other turn angles along this path are negative. Note that Gau\ss-Bonnet for essentially simple curves as stated in Lemma \ref{thm:gaussbonnetessential} applies: The argument that the circumference $\gamma$ is essentially simple was provided in the flat case. Both $\alpha_x$ and $\alpha_y$ are simple apart from admissible backtracks as given by Lemma \ref{thm:60degreesnegative}. Also, we are not using the parts of $\gamma$ visiting $x$ and $y$, therefore $\alpha_x$ and $\alpha_y$ can not meet $\gamma$ except at their endpoints. Therefore, the resulting path is still essentially simple. So the turn angle along this path should be $360\degrees$ or more. However, we can see above that this path has a total turn angle of $300\degrees$ or less, a contradiction.
 	
	$r=1$ follows. By analogous arguments $s=t=1$ and we arrive at the situation shown in figure \ref{fig:thehexnegative}.
	
	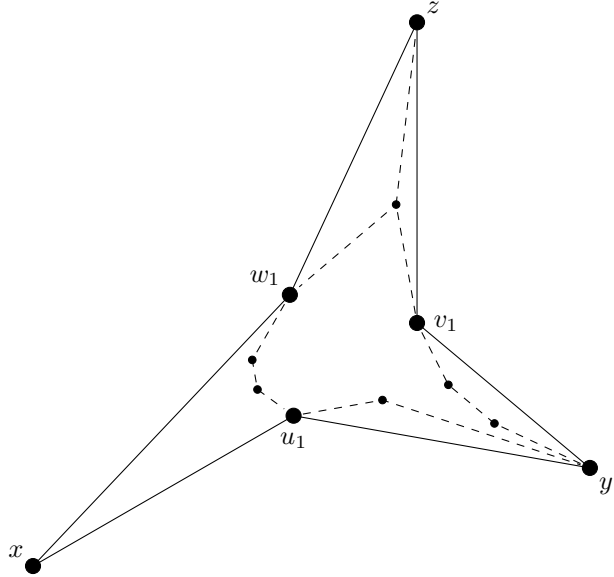
\begin{figure}
	\centering
	\begin{tikzpicture}[scale=2]
		\draw (0,0) node (x) {} -- ++(30:2) node (u) {} -- ++(-10:2) node (y) {} -- ++ (140:1.5) node (v) {}
		-- ++ (90:2) node (z) {} -- ++ (245:2) node (w) {} -- cycle;
		\draw [fill=black] (x) circle [radius=0.05];
		\draw [fill=black] (u) circle [radius=0.05];
		\draw [fill=black] (y) circle [radius=0.05];
		\draw [fill=black] (v) circle [radius=0.05];
		\draw [fill=black] (z) circle [radius=0.05];
		\draw [fill=black] (w) circle [radius=0.05];
		\node at (x) [above left] {$x$};
		\node at (u) [below=0.1cm] {$u_1$};
		\node at (y) [below right] {$y$};
		\node at (v) [right=0.1cm] {$v_1$};
		\node at (z) [above right] {$z$};
		\node at (w) [above left] {$w_1$};
		\draw [dashed] (u) -- ++(10:0.6) node (gy1) {} -- (y) -- ++(155:0.7) node (gy2) {} -- ++(140:0.4) node (gy3) {} -- (v);
		\draw [fill=black] (gy1) circle [radius=0.025];
		\draw [fill=black] (gy2) circle [radius=0.025];
		\draw [fill=black] (gy3) circle [radius=0.025];
		\draw [dashed] (v) -- ++(100:0.8) node (gz1) {} -- (z);
		\draw [dashed] (gz1) -- (w);
		\draw [fill=black] (gz1) circle [radius=0.025];
		\draw [dashed] (w) -- ++(240:0.5) node (gx1) {} -- ++(280:0.2) node (gx2) {} -- (u);
		\draw [fill=black] (gx1) circle [radius=0.025];
		\draw [fill=black] (gx2) circle [radius=0.025];
	\end{tikzpicture}
	\caption{The situation described in the final steps of the proof for nonpositive curvature. The solid path is the circumference $\gamma$, the three dashed paths are $\alpha_x$, $\alpha_y$ and $\alpha_z$ respectively. These paths could reach $x$, $y$ or $z$ as depicted. Note that since the situation where $u_1$, $v_1$ and $w_1$ don't coincide is shown to be inadmissible later in the proof, the angles in this picture can't be true to the angles as stated in the proof.}
	\label{fig:thehexnegative}
	\end{figure}
	
	We now consider the following closed path (for further reference, consider figure \ref{fig:thehexnegative}) which we call $\beta$:
	\begin{align*}
		\cdots\xrightarrow{\gamma} \underbrace{u_1\xrightarrow{\alpha_y} v_1}_{\leq 60\degrees}\xrightarrow{\gamma} \underbrace{z\vphantom{y}}_{\leq180\degrees}\xrightarrow{\gamma}  \underbrace{w_1\xrightarrow{\alpha_x} u_1}_{\leq 60\degrees}\xrightarrow{\gamma}\underbrace{y}_{\leq180\degrees}\xrightarrow{\gamma}  \underbrace{v_1\xrightarrow{\alpha_z} w_1}_{\leq 60\degrees}\xrightarrow{\gamma} \underbrace{x\vphantom{y}}_{\leq 180\degrees}\xrightarrow{\gamma}\cdots
	\end{align*}
	
	Note that all angles that are not specified are negative and therefore the sum of all turn angles along this path $\beta$ is $720\degrees$ or less. We will use the notation $\textrm{Turn}(\beta)\leq720\degrees$.
	
	\textbf{Claim:} $\textrm{Turn}(\beta)=720\degrees$. 
	
	$\beta$ crosses itself transversally by design which means it is not essentially simple and we can't apply Gau\ss-Bonnet directly. We proceed as follows:
	
	Consider the circumference $\gamma$ (given by the solid lines in figure \ref{fig:thehexnegative}) and the path $\alpha:=\alpha_x*\alpha_y*\alpha_z$ (given by the dashed lines in figure \ref{fig:thehexnegative}). $\gamma$ is essentially simple as established before. Each of $\alpha_x$, $\alpha_y$ and $\alpha_z$ is simple apart from admissible backtracks and they lie in separate convex hulls. Therefore, their concatenation $\alpha$ is also essentially simple. So Gau\ss-Bonnet applies to each of $\gamma$ and $\alpha$ and
	\begin{align*}
		\left.
		\begin{array}{l}
		\textrm{Turn}(\gamma)\geq360\degrees\\
		\textrm{Turn}(\alpha)\geq360\degrees
		\end{array}\right\rbrace\Rightarrow\textrm{Turn}(\gamma)+\textrm{Turn}(\alpha)\geq 720\degrees
	\end{align*}
	If we show $\textrm{Turn}(\beta)=\textrm{Turn}(\gamma)+\textrm{Turn}(\alpha)$, the claim follows.
	
	In fact, note that $\beta$ follows $\alpha$ and $\gamma$, only switching between them at $u_1$, $v_1$ and $w_1$. So we need to show that the total turn angle of $\beta$ at $u_1$ (which it visits twice) is the same as the sum of the turn angles of $\gamma$ and $\alpha$ at $u_1$. The observation for $v_1$ and $w_1$ will then be the same.
	
	Consider the situation at $u_1$ as depicted in Figure \ref{fig:anglechange} and observe:
	\begin{itemize*}
		\item The turn angle of $\alpha$ is $d$ (dashed-dashed).
		\item The turn angle of $\gamma$ is $a$ (solid-solid).
		\item The turn angle of $\beta$ for each of the two visits is $b$ (solid-dashed) and $c$ (dashed-solid) respectively.
	\end{itemize*}
	We use a negative sign if we go backwards along one of these angles and can see in the figure that $a-c+d-b=0$ and therefore $b+c=a+d$. So indeed, the sum of the turn angles of $\alpha$ and $\gamma$ at $u_1$ is the same as the sum of the turn angles of $\beta$ at $u_1$.
	
	Following the same argument at $v_1$ and $w_1$, we can conclude that in fact $\textrm{Turn}(\beta)=\textrm{Turn}(\gamma)+\textrm{Turn}(\alpha)$.
	
	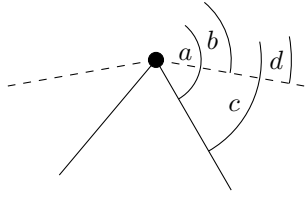
\begin{figure}
	\centering
	\begin{tikzpicture}[scale=2]
		\draw [fill=black] (0,0) circle [radius=0.05];
		\draw [dashed] (0,0) -- (-10:1);
		\draw [dashed] (0,0) -- (190:1);
		\draw (0,0) -- (-60:1);
		\draw (0,0) -- (230:1);
		\draw (50:0.3) arc [radius=0.3, start angle=50,end angle=-60];
		\draw (50:0.5) arc [radius=0.5, start angle=50,end angle=-10];
		\draw (10:0.7) arc [radius=0.7, start angle=10,end angle=-60];
		\draw (10:0.9) arc [radius=0.9, start angle=10,end angle=-10];
		\node at (10:0.2) {$a$};
		\node at (20:0.4) {$b$};
		\node at (-30:0.6) {$c$};
		\node at (0:0.8) {$d$};
	\end{tikzpicture}
	\caption{The situation at $u_1$. To simplify the picture, this sample situation is chosen so that all considered turn angles are negative.}
	\label{fig:anglechange}
	\end{figure}

	This finishes the proof of the claim. Note that $\textrm{Turn}(\beta)=720\degrees$ can only be achieved if all turn angles as specified in the definition of $\beta$ above are extremal. In particular, the angles at $x$, $y$ and $z$ must be equal to $180\degrees$. Therefore $u_1=v_1=w_1$ is the single balanced vertex of this geodesic net.
\end{proof}

%%%%%%%%%%%%%%%%%%%%%%%%%%%%%%%%%%%%%
\section{An example with four unbalanced vertices}
%%%%%%%%%%%%%%%%%%%%%%%%%%%%%%%%%%%%%
\label{sect:construct4}

In the following, we will give a detailed construction of the example geodesic net in flat $\mathbb{R}^2$ with four unbalanced vertices and $27$ balanced vertices given in Figure \ref{fig:fournet}. This is, for the moment, the maximal number of balanced vertices that the author could construct given just four unbalanced vertices.

We will write $ABC$ for the counterclockwise angle from $A$ to $C$ at $B$ and use the following classical theorem of Euclidean Geometry:
\begin{theorem}
	\label{thm:thales}
	Consider a circle with centre $O$ and three points $A$, $B$, $C$ on this circle, in clockwise order. Then the angle $AOC$ is twice as large as the angle $ABC$.
\end{theorem}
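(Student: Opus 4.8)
The plan is to reduce to the classical inscribed-angle argument via the isosceles triangles formed by radii. First I would introduce the point $D$, the second intersection of the line through $B$ and $O$ with the circle (so $D$ is the antipode of $B$). Since $OA=OB=OC=OD$ are all radii, the triangles $OAB$ and $OBC$ are isosceles with apex at $O$. Applying the exterior-angle identity (an exterior angle of a triangle equals the sum of the two non-adjacent interior angles) along the line $BOD$: in triangle $OAB$ the base angles at $A$ and $B$ are equal, so the exterior angle at $O$ on the $D$-side satisfies $\angle AOD = 2\,\angle ABO = 2\,\angle ABD$, the last step because $D$ lies on the ray from $B$ through $O$. Likewise $\angle DOC = 2\,\angle DBC$.

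Next I would assemble the two pieces. In the configuration pinned down by the hypothesis that $A$, $B$, $C$ occur in clockwise order on the circle, together with the convention that $ABC$ denotes the counterclockwise angle from $A$ to $C$ at $B$, the ray $BD$ separates $A$ from $C$, so that $\angle ABC = \angle ABD + \angle DBC$ and correspondingly $\angle AOC = \angle AOD + \angle DOC$. Combining with the previous step gives $\angle AOC = 2(\angle ABD + \angle DBC) = 2\,\angle ABC$, which is the claim.

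The main obstacle is the bookkeeping of configurations rather than any real content: depending on where $D$ (the antipode of $B$) falls relative to the arc $AC$, the two partial angles at $B$ and at $O$ might need to be subtracted instead of added, and one must check that the orientation convention for $ABC$ makes the stated identity hold exactly and not merely up to sign or modulo $360\degrees$. I would dispatch this with a short case split -- $D$ on the arc $AC$ not containing $B$ versus the complementary arc, plus the degenerate subcases where $D$ coincides with $A$ or $C$ -- verifying that each case again yields $\angle AOC = 2\,\angle ABC$; the clockwise-ordering hypothesis is precisely what excludes the ambiguous orientations. The isosceles-triangle computation in the middle step is entirely routine.
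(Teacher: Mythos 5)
Your argument is the standard textbook proof of the inscribed-angle theorem (antipode $D$ of $B$, isosceles triangles on radii, exterior-angle identity, then a case split on the position of $D$), and it is correct as outlined; the subtraction cases you flag do genuinely occur, so the case analysis is necessary rather than cosmetic. The paper itself offers no proof to compare against: Theorem \ref{thm:thales} is invoked there purely as a classical fact of Euclidean geometry, so supplying this argument fills in something the paper deliberately leaves to the reader. One point to be careful about when you carry out the bookkeeping: with the paper's convention that $ABC$ denotes the \emph{counterclockwise} angle from $A$ to $C$ at $B$, and with $A$, $B$, $C$ in clockwise order, the central angle $AOC$ may be reflex --- indeed this is exactly how the theorem is used in the construction of Figure \ref{fig:construct4}, where the angle at $P$ is $240\degrees$ and the inscribed angle $CB_iA$ is $120\degrees$. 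So your case split should either work throughout with directed (counterclockwise) angles, in which case the additive identity $\angle AOD=2\angle ABD$, $\angle DOC=2\angle DBC$ handles the reflex configuration automatically, or else explicitly include the case where $\angle AOC>180\degrees$; with that understood, the proof goes through.
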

Using this theorem, we can construct the geodesic net. For a visualization, refer to Figure \ref{fig:construct4}.

\begin{figure}
	\centering
	
	\begin{tikzpicture}
	
	\node at (0,0) (Q) {};
	\node at (150:3) (X) {};
	\node at (125:3) (Y1) {};
	\node at (90:3) (Y2) {};
	\node at (55:3) (Y3) {};
	\node at (30:3) (Z) {};
	
	\node at (0,15) (P) {};
	\path (P) -- ++(210:6) node (A) {};
	\path (P) -- ++(255:6) node (B1) {};
	\path (P) -- ++(270:6) node (B2) {};
	\path (P) -- ++(285:6) node (B3) {};
	\path (P) -- ++(330:6) node (C) {};
	\path (P) -- ++(330:0.7) node (anglearcstart) {};
	
	\node at (-.84,4.48) (L) {};
	\node at (.84,4.48) (N) {};
	
	\draw [fill=black] (P) circle [radius=0.05] node [above=0.1] {$P$};
	\node at (P) [right=1cm] {angle $=240\degrees$};
	\draw [fill=black] (A) circle [radius=0.05] node [left=0.1] {$A$};
	\draw [fill=black] (C) circle [radius=0.05] node [right=0.1] {$C$};
	\draw [fill=black] (B1) circle [radius=0.05] node [below left=0.1] {$B_1$};
	\draw [fill=black] (B2) circle [radius=0.05] node [below right=0.1] {$B_2$};
	\draw [fill=black] (B3) circle [radius=0.05] node [below right=0.1] {$B_3$};
	
	\draw [fill=black] (Q) circle [radius=0.05] node [below=0.1] {$Q$};
	\node at (Q) [right=1cm] {angle $=240\degrees$};
	\draw [fill=black] (X) circle [radius=0.05] node [left=0.1] {$X$};
	\draw [fill=black] (Z) circle [radius=0.05] node [right=0.1] {$Z$};
	\draw [fill=black] (Y1) circle [radius=0.05] node [above left=0.1] {$Y_1$};
	\draw [fill=black] (Y2) circle [radius=0.05] node [above right=0.1] {$Y_2$};
	\draw [fill=black] (Y3) circle [radius=0.05] node [above right=0.1] {$Y_3$};

	\draw [dashed] (A.center) -- (P.center) -- (C.center);
	\draw [dashed] (A) arc [radius = 6,start angle = 210, end angle = 330];
	\draw [dashed] (150:0.7) arc [radius=0.7,start angle = 150, end angle = 390];

	\draw [dashed] (X.center) -- (Q.center) -- (Z.center);
	\draw [dashed] (Z) arc [radius = 3,start angle = 30, end angle = 150];
	\draw [dashed] (anglearcstart) arc [radius=0.7,start angle = -30, end angle = 210];	
	
	\draw (A.center) -- (B1.center) -- (C.center);
	\draw (A.center) -- (B2.center) -- (C.center);
	\draw (A.center) -- (B3.center) -- (C.center);
	
	\draw (X.center) -- (Y1.center) -- (Z.center);
	\draw (X.center) -- (Y2.center) -- (Z.center);
	\draw (X.center) -- (Y3.center) -- (Z.center);
	
	\draw (B1.center) -- (X.center);
	\draw (B3.center) -- (Z.center);
	
	\draw (Y1.center) -- (A.center);
	\draw (Y3.center) -- (C.center);
	
	\draw (B2.center) -- (Y2.center);
	
	\draw [dotted,line width=1] (A.center) -- (L.center);
	\draw [dotted,line width=1] (X.center) -- (L.center);
	\draw [dotted,line width=1] (Z.center) -- (N.center);
	\draw [dotted,line width=1] (C.center) -- (N.center);
	\draw [dotted,line width=1] (L.center) -- (N.center);
\end{tikzpicture}
	\caption{Construction of the geodesic net in Figure \ref{fig:fournet}. For better overview, the segments $XC$ and $ZA$ are not displayed. Using theorem \ref{thm:thales}, we know that if the angle at $P$ is $240\degrees$, then the angle $CB_iA$ will be $120\degrees$. The same is true for the angles $XY_iZ$. The subgraph $G'$ is given in dotted lines.}
	\label{fig:construct4}
\end{figure}
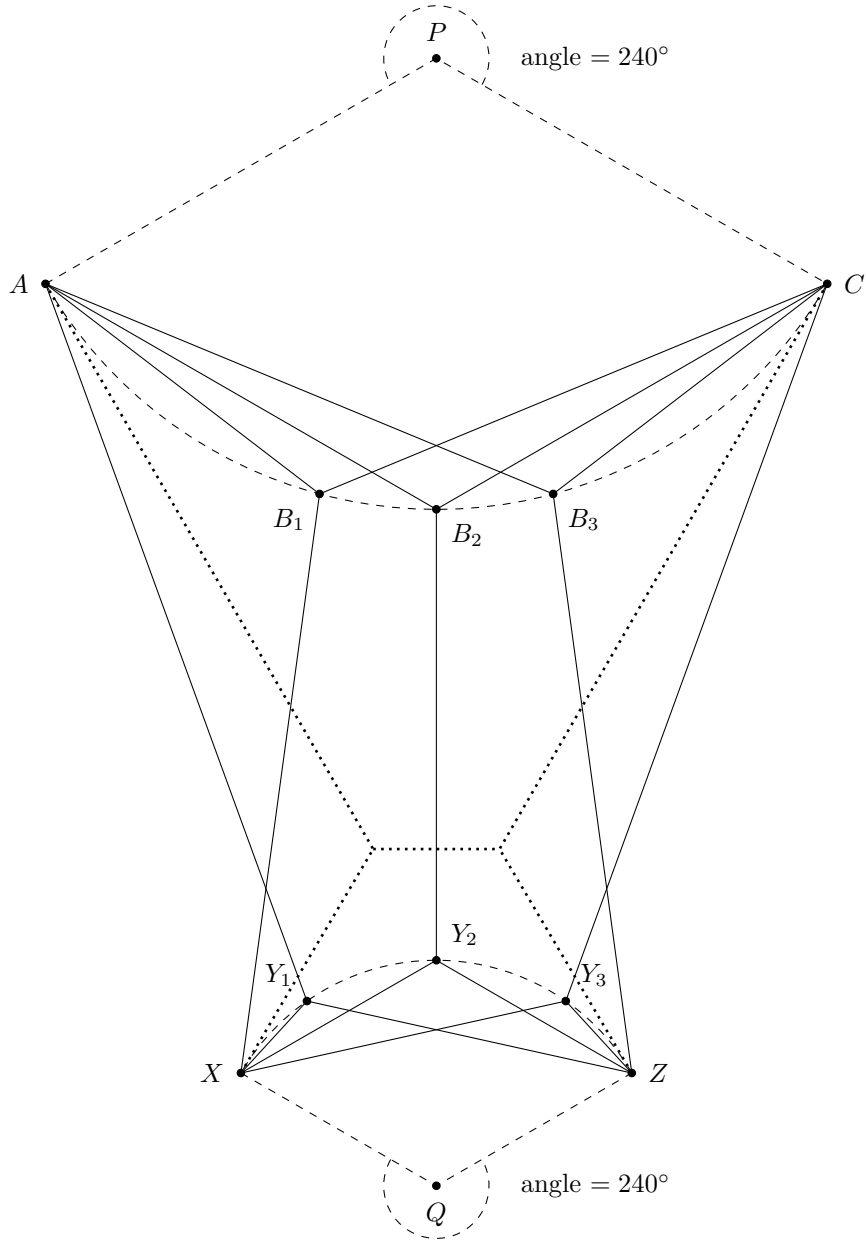

When describing circular arcs below we use the canonical angle on a circle where $0\degrees$ describes the rightmost point and we go counterclockwise. The choice of the units $1$, $2$ and $5$ during the construction will be justified below.

\begin{itemize*}
	\item Fix two points $P$ and $Q$ with distance $5$ units on a vertical axis. The line through $P$ and $Q$ will from now on be called \emph{the axis}.
	\item Draw a circular segment of radius $2$ centred at $P$ starting at angle $210\degrees$ and ending at angle $330\degrees$ (so that the outer angle of the circular segment will be $240\degrees$).
	\item Draw a circular segment of radius $1$ centred at $Q$ starting at angle $30\degrees$ and ending at angle $150\degrees$ (so that the outer angle of the circular segment will be $240\degrees$).
	\item The endpoints of these arcs are denoted by $A$, $C$ and $X$, $Z$ respectively. They will be the four unbalanced vertices of the geodesic net.
	\item Note that now, by Theorem \ref{thm:thales}, any point $B_i$ on the circular arc centred at $P$ will have the following property: $CB_iA=120\degrees$.
	\item By the same argument, any point $Y_i$ on the circular arc centred at $Q$ will give an angle $XY_iZ=120\degrees$.
	\item The vertices $B_2$ and $Y_2$ will be positioned at the intersection of the two circular arcs with the axis.
	\item Add the edges $AB_2$, $CB_2$, $B_2Y_2$, $XY_2$ and $ZY_2$. We now have two balanced degree $3$ vertices $B_2$ and $Y_2$.
	\item Find $B_1$ as follows: We want to add $B_1$ to the circular arc between $A$ and $B_2$. We will find the exact position as follows: Note that if $B_1=B_2$, then
		\begin{align*}
			XB_1C=XB_2C > Y_2B_2C=120\degrees
		\end{align*}
		and if $B_1=A$, then
		\begin{align*}
			XB_1C=XAC\leq 90\degrees
		\end{align*}
		Note that $XAC\leq 90\degrees$ is due to the fact that the distance from $X$ to the axis is smaller than the distance from $A$ to the axis. By continuity, there now must be some choice for $B_1$ on the circular arc between $A$ and $B_2$ such that $XB_1C=120\degrees$.
	\item Now add the edges $AB_1$, $CB_1$ and $XB_1$ making $B_1$ a balanced degree $3$ vertex.
	\item $Y_1$ can be found in a very similar fashion: Similar to above, we have to find the position of $Y_1$ on the arc between $X$ and $Y_2$. Note that if $Y_1=Y_2$, we would have
		\begin{align*}
			ZY_1A=ZY_2A>ZY_2B_2=120\degrees
		\end{align*}
		and if $Y_1=X$, then
		\begin{align*}
			ZY_1A=ZXA<120\degrees
		\end{align*}
	 	The inequality $ZXA<120\degrees$ will be justified below. Again by continuity there is a choice for $Y_1$ such that $ZY_1A=120\degrees$. Add the appropriate edges to make $Y_1$ a balanced degree $3$ vertex.
	\item $B_3$ and $Y_3$ are simply the reflection of $B_1$ and $Y_1$ along the axis. We add the appropriate edges to make them balanced degree $3$ vertices.
	\item Now add a subgraph $G'$ with two degree $3$ balanced vertices (shown with dotted lines in Figure \ref{fig:construct4}). That adding $G'$ is possible should be clear from the figure but will be justified further below.
	\item Finally, add the two segments $AZ$, and $CX$ (not depicted in Figure \ref{fig:construct4}) which will intersect at a vertex $M$ on the axis. This is the single degree $6$ vertex of the geodesic net.
\end{itemize*}

We now have a geodesic net with the unbalanced vertices $A$, $C$, $X$ and $Z$, with eight balanced degree $3$ vertices ($B_i$, $Y_i$ and the two vertices of $G'$), eighteen balanced degree $4$ vertices (these are all the intersections between straight line segments) and one balanced degree $6$ vertex ($M$).

During the construction, we made three seemingly liberal choices regarding length: The radii of the two arcs are given as $R=2$ and $r=1$ and the distance between $P$ and $Q$ was chosen to be $d=5$. While these are not the only possible choices, there are still some restrictions on these numbers:
\begin{itemize*}
	\item If $r=R$, the subgraph $G'$ added in the last step would intersect the axis in the point $M$. This would produce a geodesic net where $M$ is a vertex of degree $8$, however the total number of balanced vertices is larger if $M$ does not lie on $G'$. Therefore, $r\neq R$ gives more balanced vertices.
	\item Clearly, $d> R+r$ because otherwise the two arcs would intersect.
	\item We have to choose $R$, $r$, $d$ such that the angle $ZXA<120\degrees$, a fact we used above. It is an exercise in trigonometry that $R=2$, $r=1$ and $d=5$ is one such choice but of course not the only one. For the given numbers, we have $ZXA\approx 104\degrees$.
	\item Finally, we have to ensure that the subgraph $G'$ \enquote{fits} in the picture, i.e. that it is possible to position the two degree $3$ vertices in a way that allows all three angles at each vertex to be $120\degrees$. The choices we made for $R$, $r$, $d$ are one possibility for which, by basic arguments involving the angle sum in triangles and quadrilaterals, adding such a subgraph is possible.
\end{itemize*}
In fact, the restrictions on $R$, $r$ and $d$ allow for enough freedom that, with any two fixed, the third one can be slightly pertubed still allowing the same construction. In other words, the set of admissible $(R,r,d)$ is an open set in $\mathbb{R}^3$.

The geodesic net constructed in this section shows that in the context of Theorem \ref{thm:main}, we have $f(4)\geq 27$. This example only makes use of degree $3$, $4$ and $6$ balanced vertices and is also highly symmetric, suggesting that it might in fact not be a (or even \textit{the}) maximal example. In fact, it seems that once we go beyond three unbalanced vertices, we gain much more \enquote{freedom} as to how the balanced vertices can be distributed. In particular, for very large degree we should be able to construct with very small restrictions. In the context of the proof in this paper, the $60\degrees$ Lemma \ref{thm:60degrees} \enquote{looses its teeth}. These observations support the conjectures stated in section \ref{sect:intro}.

%%%%%%%%%%%%%%%%%%%%%%%%%%%%%%%%%%%%%
\section{Geodesic nets in the case of positive curvature}
%%%%%%%%%%%%%%%%%%%%%%%%%%%%%%%%%%%%%
\label{sect:positive}

Based on the result for nonpositive curvature, one should consider if similar statements can be made for geodesic nets on surfaces of positive curvature.

There is an example of a geodesic net on the round hemisphere with 3 unbalanced and 3 balanced vertices. The example is given in Figure \ref{fig:hemisphere} and can be constructed as follows:
\begin{itemize*}
	\item Parametrize the upper hemisphere by latitude ($0\degrees$ is the north pole, $90\degrees$ is the equator) and longitude (from $0\degrees$ to $360\degrees$).
	\item Position three vertices $A$, $B$ and $C$ as follows: they have longitude $0\degrees$, $120\degrees$ and $240\degrees$ respectively and all three have the same latitude $L$ which will be chosen below.
	\item Note that for each $0\degrees\leq L\leq 90\degrees$, the three vertices $A$, $B$ and $C$ form an equilateral geodesic triangle.
	\item At $L=0\degrees$, the triangle is empty, at $L=90\degrees$ the triangle has area $2\pi$. Therefore, for some $0\degrees<L<90\degrees$, the area is $\pi$. This is the latitude we choose for the three vertices.
	\item The result is a geodesic triangle where the interior angle at each of $A$, $B$ and $C$ is $120\degrees$. This follows directly from Gau\ss-Bonnet.
	\item Finally, position three vertices $X$, $Y$ and $Z$ at the equator at longitude $0\degrees$, $120\degrees$ and $240\degrees$ respectively.
	\item Add the edges $XA$, $YB$ and $ZC$.
\end{itemize*}
The result is a geodesic net with three unbalanced vertices $X$, $Y$, $Z$ and three balanced vertices $A$, $B$, $C$, each of degree three.

Note that we can avoid the equator altogether by moving up $X$, $Y$ and $Z$ to a latitude of less than $90\degrees$. This implies that the example can be constructed on a surface of positive curvature with no closed (nontrivial) geodesics.

It is obvious that we can extend this example to one on $\mathbb{R}^2$ with positive curvature. Actually, all that is needed for this example is an equiangular geodesic triangle with total curvature equal to $\pi$. Any surface with such a triangle allows this geodesic net to be embedded.

This observation raises the following question:

\emph{Is there an example of a geodesic net with three unbalanced vertices on $\mathbb{R}^2$ endowed with a metric of positive curvature such that the interior of the geodesic net has total curvature less than $\pi$ but still there is more than one balanced vertex?}

While we conjecture the answer to this question to be \enquote{No}, note that the proofs for the flat and negatively curved cases above make extensive use of the fact that the turn angle along any counterclockwise path is at least $360\degrees$, a fact that isn't true even for small amounts of positive curvature. This conjecture therefore necessitates a different proof.

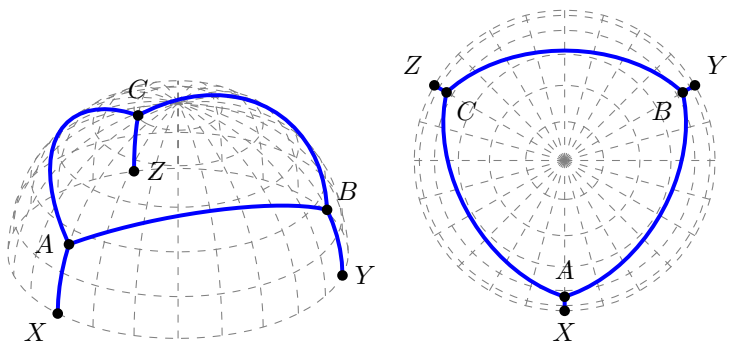
\begin{figure}
	\centering
	\tikzset{perspective/.style={scale=2,x={(-0.8cm,-0.4cm)},y={(0.8cm,-0.4cm)},
    z={(0cm,1cm)}},
    perspective2/.style={scale=2,x={(0cm,-1cm)},y={(1cm,0cm)},
    z={(0cm,0cm)}},    
    points/.style={fill=black,draw=black,thick}
    }
	\begin{tikzpicture}[perspective]

	\def\alpha{25}
	\def\rotangle{43}
	\def\edgefrom{40}
	\def\edgefromtwo{45}
	\def\edgeuntil{140}

	\coordinate (X) at (1,0,0);
	\coordinate (Y) at ({cos(120)},{sin(120)},0);
	\coordinate (Z) at ({cos(240)},{sin(240)},0);
	
	\coordinate (A) at ({cos(0)*cos(\alpha)},{sin(0)*cos(\alpha)},{sin(\alpha)});
	\coordinate (B) at ({cos(120)*cos(\alpha)},{sin(120)*cos(\alpha)},{sin(\alpha)});
	\coordinate (C) at ({cos(240)*cos(\alpha)},{sin(240)*cos(\alpha)},{sin(\alpha)});

	\foreach \t in {0,15,...,165}
			{\draw[gray,dashed] ({cos(\t)},{sin(\t)},0)
				\foreach \rho in {5,10,...,180}
					{--({cos(\t)*cos(\rho)},{sin(\t)*cos(\rho)},
          {sin(\rho)})};
			}
	\foreach \t in {0,15,...,75}
			{\draw[gray,dashed] ({cos(\t)},0,{sin(\t)})
				\foreach \rho in {5,10,...,360}
					{--({cos(\t)*cos(\rho)},{cos(\t)*sin(\rho)},
          {sin(\t)})}--cycle;
			}
	
	\draw[blue,line width=1.5] (X)
				\foreach \rho in {5,10,...,\alpha}
					{--({cos(0)*cos(\rho)},{sin(0)*cos(\rho)},
          {sin(\rho)})};	
    \draw[blue,line width=1.5] (Y)
				\foreach \rho in {5,10,...,\alpha}
					{--({cos(120)*cos(\rho)},{sin(120)*cos(\rho)},
          {sin(\rho)})};	
    \draw[blue,line width=1.5] (Z)
				\foreach \rho in {5,10,...,\alpha}
					{--({cos(240)*cos(\rho)},{sin(240)*cos(\rho)},
          {sin(\rho)})};	
	
	\draw[blue,line width=1.5] (A)
				\foreach \rho in {\edgefrom,\edgefromtwo,...,\edgeuntil} 
					{--({cos(-30)*cos(\rho)-sin(-30)*cos(\rotangle)*sin(\rho)},{sin(-30)*cos(\rho)+cos(-30)*cos(\rotangle)*sin(\rho)},
          {sin(\rotangle)*sin(\rho)})} -- (B);
          
	\draw[blue,line width=1.5] (B)
				\foreach \rho in {\edgefrom,\edgefromtwo,...,\edgeuntil} 
					{--({cos(-30+120)*cos(\rho)-sin(-30+120)*cos(\rotangle)*sin(\rho)},{sin(-30+120)*cos(\rho)+cos(-30+120)*cos(\rotangle)*sin(\rho)},
          {sin(\rotangle)*sin(\rho)})} -- (C);   
    
	\draw[blue,line width=1.5] (C)
				\foreach \rho in {\edgefrom,\edgefromtwo,...,\edgeuntil} 
					{--({cos(-30+240)*cos(\rho)-sin(-30+240)*cos(\rotangle)*sin(\rho)},{sin(-30+240)*cos(\rho)+cos(-30+240)*cos(\rotangle)*sin(\rho)},
          {sin(\rotangle)*sin(\rho)})} -- (A);

	\fill[points] (X) ellipse (0.8pt and 0.8pt) node [below left] {$X$};
	\fill[points] (Y) ellipse (0.8pt and 0.8pt) node [right=1pt] {$Y$};
	\fill[points] (Z) ellipse (0.8pt and 0.8pt) node [right=1pt] {$Z$};
	
	\fill[points] (A) ellipse (0.8pt and 0.8pt) node [left=2pt] {$A$};
	\fill[points] (B) ellipse (0.8pt and 0.8pt) node [above right] {$B$};
	\fill[points] (C) ellipse (0.8pt and 0.8pt) node [above=3pt] {$C$};
\end{tikzpicture}	
\begin{tikzpicture}[perspective2]

	\def\alpha{25}
	\def\rotangle{43}
	\def\edgefrom{40}
	\def\edgefromtwo{45}
	\def\edgeuntil{140}

	\coordinate (X) at (1,0,0);
	\coordinate (Y) at ({cos(120)},{sin(120)},0);
	\coordinate (Z) at ({cos(240)},{sin(240)},0);
	
	\coordinate (A) at ({cos(0)*cos(\alpha)},{sin(0)*cos(\alpha)},{sin(\alpha)});
	\coordinate (B) at ({cos(120)*cos(\alpha)},{sin(120)*cos(\alpha)},{sin(\alpha)});
	\coordinate (C) at ({cos(240)*cos(\alpha)},{sin(240)*cos(\alpha)},{sin(\alpha)});

	\foreach \t in {0,15,...,345}
			{\draw[gray,dashed] ({cos(\t)},{sin(\t)},0)
				\foreach \rho in {5,10,...,90}
					{--({cos(\t)*cos(\rho)},{sin(\t)*cos(\rho)},
          {sin(\rho)})};
			}
	\foreach \t in {0,15,...,75}
			{\draw[gray,dashed] ({cos(\t)},0,{sin(\t)})
				\foreach \rho in {5,10,...,360}
					{--({cos(\t)*cos(\rho)},{cos(\t)*sin(\rho)},
          {sin(\t)})}--cycle;
			}
	
	\draw[blue,line width=1.5] (X)
				\foreach \rho in {5,10,...,\alpha}
					{--({cos(0)*cos(\rho)},{sin(0)*cos(\rho)},
          {sin(\rho)})};	
    \draw[blue,line width=1.5] (Y)
				\foreach \rho in {5,10,...,\alpha}
					{--({cos(120)*cos(\rho)},{sin(120)*cos(\rho)},
          {sin(\rho)})};	
    \draw[blue,line width=1.5] (Z)
				\foreach \rho in {5,10,...,\alpha}
					{--({cos(240)*cos(\rho)},{sin(240)*cos(\rho)},
          {sin(\rho)})};	
	
	\draw[blue,line width=1.5] (A)
				\foreach \rho in {\edgefrom,\edgefromtwo,...,\edgeuntil} 
					{--({cos(-30)*cos(\rho)-sin(-30)*cos(\rotangle)*sin(\rho)},{sin(-30)*cos(\rho)+cos(-30)*cos(\rotangle)*sin(\rho)},
          {sin(\rotangle)*sin(\rho)})} -- (B);
          
	\draw[blue,line width=1.5] (B)
				\foreach \rho in {\edgefrom,\edgefromtwo,...,\edgeuntil} 
					{--({cos(-30+120)*cos(\rho)-sin(-30+120)*cos(\rotangle)*sin(\rho)},{sin(-30+120)*cos(\rho)+cos(-30+120)*cos(\rotangle)*sin(\rho)},
          {sin(\rotangle)*sin(\rho)})} -- (C);   
    
	\draw[blue,line width=1.5] (C)
				\foreach \rho in {\edgefrom,\edgefromtwo,...,\edgeuntil} 
					{--({cos(-30+240)*cos(\rho)-sin(-30+240)*cos(\rotangle)*sin(\rho)},{sin(-30+240)*cos(\rho)+cos(-30+240)*cos(\rotangle)*sin(\rho)},
          {sin(\rotangle)*sin(\rho)})} -- (A); 
	
	\fill[points] (X) ellipse (0.8pt and 0.8pt) node [below=1pt] {$X$};
	\fill[points] (Y) ellipse (0.8pt and 0.8pt) node [above right=1pt] {$Y$};
	\fill[points] (Z) ellipse (0.8pt and 0.8pt) node [above left=1pt] {$Z$};
	
	\fill[points] (A) ellipse (0.8pt and 0.8pt) node [above=3pt] {$A$};
	\fill[points] (B) ellipse (0.8pt and 0.8pt) node [below left] {$B$};
	\fill[points] (C) ellipse (0.8pt and 0.8pt) node [below right] {$C$};
\end{tikzpicture}
	\caption{A geodesic net on the unit hemisphere with three unbalanced vertices and three balanced vertices.}
	\label{fig:hemisphere}
\end{figure}

\begin{minipage}{\textwidth}
%%%%%%%%%%%%%%%%%%%%%%%%%%%%%%%%%%%%%
\section{Acknowledgements}
%%%%%%%%%%%%%%%%%%%%%%%%%%%%%%%%%%%%%

The author would like to thank his PhD advisor Alexander Nabutovsky for suggesting conjectures \ref{conj:limitless} and \ref{conj:imbalance}, numerous discussions and for carefully reading the first draft of this manuscript. He would also like to thank the two referees for helpful suggestions. This research was partially supported by an NSERC Vanier Scholarship.
\end{minipage}

\bibliographystyle{amsalpha}

\bibliography{../../bib/main}

\end{document}